\newtheorem{theorem}{Theorem}
\numberwithin{theorem}{section}
\newtheorem{corollary}[theorem]{Corollary}
\newtheorem{lemma}[theorem]{Lemma}
\newtheorem{proposition}[theorem]{Proposition}
\theoremstyle{definition}
\newtheorem{definition}[theorem]{Definition}
\newtheorem{example}[theorem]{Example}
\newcommand{\supp}{\operatorname{supp}}
\newcommand{\rng}{\operatorname{rng}}
\newcommand{\leqf}{\leq^{\operatorname{fin}}}
\newcommand{\en}{\operatorname{en}}
\newcommand{\seq}{\operatorname{Seq}}
\newcommand{\trace}{\operatorname{Tr}}
\newcommand{\hig}{\operatorname{Hig}}
\newcommand{\emb}{\operatorname{Emb}}
\newcommand{\T}{\mathcal T}
\newcommand{\rca}{\mathbf{RCA}}
\newcommand{\aca}{\mathbf{ACA}}
\newcommand{\cac}{\mathbf{CAC}}
\newcommand{\pica}{\mathbf{\Pi^1_1\text{-}CA}}
\newcommand{\poz}{\operatorname{PO_0}}
\newcommand{\po}{\operatorname{PO}}
\newcommand{\lo}{\operatorname{LO}}
\newcommand{\loz}{\operatorname{LO_0}}
\newcommand{\id}{\operatorname{id}}
\title[A uniform Kruskal theorem]{Minimal bad sequences are necessary for\\ a uniform Kruskal theorem}
\author{Anton Freund, Michael Rathjen and Andreas Weiermann}
\begin{document}

\begin{abstract}
The minimal bad sequence argument due to Nash-Williams is a powerful tool in combinatorics with important implications for theoretical computer science. In particular, it yields a very elegant proof of Kruskal's theorem. At the same time, it is known that Kruskal's theorem does not require the full strength of the minimal bad sequence argument. This claim can be made precise in the framework of reverse mathematics, where the existence of minimal bad sequences is equivalent to a principle known as $\Pi^1_1$-comprehension, which is much stronger than Kruskal's theorem. In the present paper we give a uniform version of Kruskal's theorem by relativizing it to certain transformations of well partial orders. We show that $\Pi^1_1$-comprehension is equivalent to our uniform Kruskal theorem (over~$\mathbf{RCA}_0$ together with the chain-antichain principle). This means that any proof of the uniform Kruskal theorem must entail the existence of minimal bad sequences. As a by-product of our investigation, we obtain uniform proofs of several Kruskal-type independence results.
\end{abstract}

\subjclass[2010]{03B30 (primary), 05C05, 06A07, 68Q42, 03F35}
\keywords{Kruskal's theorem, minimal bad sequence, reverse mathematics, dilators on partial orders, recursive path ordering, independence results}

\maketitle

\section{Introduction}

Recall that a partial order consists of a set~$X$ and a binary relation $\leq_X$ on $X$ that is reflexive, transitive, and antisymmetric. An infinite sequence $x_0,x_1,\dots$ in~$X$ is called good if there are $i<j$ with $x_i\leq_X x_j$; otherwise it is called bad. If $X$ contains no infinite bad sequence, then it is called a well partial order (wpo). Let us write $\mathcal T$ for the set of finite trees. To obtain a partial order, we agree that $T\leq_{\mathcal T} T'$ holds if there is an injection $f:T\to T'$ that preserves infima. Kruskal's theorem~\cite{kruskal60} asserts that $\mathcal T$ is a well partial order. Note that there are several variants of Kruskal's theorem, in particular for structured and labelled trees.

A particularly short and transparent proof of Kruskal's theorem was given by Nash-Williams~\cite{nash-williams63}. Assuming that the theorem fails, the idea is to consider a bad sequence $T_0,T_1,\dots$ that is minimal in the following sense: For each~$i$, if
\begin{equation*}
T_0,\dots,T_{i-1},T_i',T_{i+1}',\dots
\end{equation*}
is an infinite bad sequence, then $T_i$ has at most as many vertices as~$T_i'$. It is relatively straightforward to show that the existence of such a minimal bad sequence leads to a contradiction, which establishes Kruskal's theorem.

Reverse mathematics~\cite{simpson09} is a well-established framework in which one can compare the strength of theorems and proof methods. The basic idea is to establish implications and equivalences over a weak base theory, most often the theory $\rca_0$ of recursive comprehension. It turns out that many theorems from various areas of mathematics are equivalent to one of four principles. We will encounter the three stronger of these, which are known as arithmetical comprehension ($\aca_0$), arithmetical transfinite recursion ($\mathbf{ATR}_0$) and $\Pi^1_1$-comprehension ($\pica_0$). To establish the main result of our paper, we will extend the base theory $\rca_0$ by the chain-antichain principle $\cac$, which asserts that any infinite partial order contains an infinite chain (linear suborder) or antichain. This principle is much weaker than arithmetical comprehension and ensures that different definitions of well partial order are equivalent (see~the analysis by Cholak, Marcone and Solomon~\cite{cholak-RM-wpo}).

Due to work of Schmidt~\cite{schmidt-habil} and Friedman (presented by Simpson~\cite{simpson85}), Kruskal's theorem does not follow from arithmetical transfinite recursion. This is significant, because it shows that the incompleteness phenomenon from G\"odel's theorem does apply to important statements of core mathematics. The precise strength of Kruskal's theorem has been determined by Rathjen and Weiermann~\cite{rathjen-weiermann-kruskal}. In particular, their analysis shows that Kruskal's theorem (even with labels) is far weaker than $\Pi^1_1$-comprehension. On the other hand, Marcone~\cite{marcone-bad-sequence} has shown that $\Pi^1_1$-comprehension is equivalent to a general statement about the existence of minimal bad sequences, which is known as the minimal bad sequence lemma. From a foundational perspective, this shows that the minimal bad sequence argument does not yield the most elementary proof of Kruskal's theorem (of course, it may still yield the most elegant proof).

In the present paper we show that a uniform version of Kruskal's theorem is equivalent to $\Pi^1_1$-comprehension. In view of the previous paragraph, this means that our uniform Kruskal theorem exhausts the full strength of the minimal bad sequence lemma. To motivate our approach, we consider the least fixed point $\T W_A$ of the transformation
\begin{equation*}
X\mapsto W_A(X):=A\times X^{<\omega},
\end{equation*}
where $A$ is a given partial order and $X^{<\omega}$ denotes the set of finite sequences with entries in~$X$. Observe that $\T W_A$ is the set of finite structured trees with labels in~$A$. Indeed, an element $(a,\langle T_1,\dots,T_n\rangle)\in W_A(\T W_A)$ corresponds to the tree with root label~$a$ and immediate subtrees $T_1,\dots,T_n$. Also note that the usual embeddability relation between structured trees is related to the Higman order on~$X^{<\omega}$.  Now if $A$ is a well partial order, then $W_A(X)$ is a wpo for any wpo~$X$. Using the minimal bad sequence argument, one can infer that $\mathcal T W_A$ is a wpo as well. Hasegawa~\cite{hasegawa97} has shown that the second step of this argument applies to any transformation~$W$ of well partial orders, even when $W$ does not come from a family of transformations $W_A$ indexed by partial orders. In particular, he has given a general construction of~$\T W$ (with a suitable order relation) as a direct limit, relative to a suitable functor~$W$. Hasegawa's paper also shows that $\T W$ is relevant for theoretical computer science, as it provides a uniform foundation for the recursive path orderings used in term rewriting (cf.~the work of Dershowitz~\cite{dershowitz82}).

Working in the setting of reverse mathematics, it is not straightforward to express general statements about transformations of (countable) orders, because these transformations are class-sized (or at least uncountable) objects. In the linear case, a solution is provided by Girard's dilators~\cite{girard-pi2}, which are defined as particularly uniform functors between well orders. Due to their uniformity, dilators are determined by their values on the category of finite orders, which makes it possible to represent them in reverse mathematics. In Section~\ref{sect:formalize-po-dilators} of this paper we introduce \mbox{PO-dilators} as uniform functors between partial orders. We then define \mbox{WPO-dilators} as \mbox{PO-dilators} that map wpos to wpos. To avoid misunderstanding, dilators in the sense of Girard are called WO-dilators in the present paper. \mbox{Finally}, an LO-dilator is a transformation of linear orders that has all properties of a WO-dilator, except that it does not need to preserve well foundedness.

In Section~\ref{sect:generalized-Kruskal} we define a partial order $\T W$ for any PO-dilator~$W$ that satisfies a certain normality condition. Our construction of $\T W$ as a term system builds on previous work of Weiermann~\cite{weiermann09}. We can then formulate the following principle:
\begin{equation*}
\parbox{0.97\textwidth}{\textbf{Uniform Kruskal Theorem.} If $W$ is a normal WPO-dilator, then $\T W$ is a well partial order.}
\end{equation*}
From a logical perspective, we observe that this is a $\Pi^1_3$-statement. In contrast, a statement of the form ``if~$A$ is a wpo, then $\T W_A$ is a wpo" has complexity $\Pi^1_2$. It is known that $\Pi^1_1$-comprehension is a $\Pi^1_3$-statement that cannot be equivalent to any $\Pi^1_2$-statement.

Still in Section~\ref{sect:generalized-Kruskal}, we adapt Hasegawa's argument to show that the uniform Kruskal theorem follows from the minimal bad sequence lemma, and hence from \mbox{$\Pi^1_1$-comprehension}. In the rest of our paper we prove the converse direction, so that we get the following result (see Theorem~\ref{thm:main} below for the official statement):
\begin{equation*}
\parbox{0.97\textwidth}{\textbf{Main Result.} Over $\rca_0+\cac$, the uniform Kruskal theorem is equivalent to $\Pi^1_1$-comprehension, and hence to the minimal bad sequence lemma.}
\end{equation*}
To show that $\Pi^1_1$-comprehension follows from the uniform Kruskal theorem, we use a result of Freund~\cite{freund-equivalence,freund-categorical,freund-computable}. Inspired by Rathjen's notation system for the Bachmann-Howard ordinal (see~\cite{rathjen-weiermann-kruskal}), Freund has defined a linear order $\vartheta(D)$ for any LO-dilator~$D$. He has then shown that $\Pi^1_1$-comprehension is equivalent to the following statement:
\begin{equation*}
\parbox{0.97\textwidth}{\textbf{Bachmann-Howard Principle.} If $D$ is a WO-dilator, then $\vartheta(D)$ is a well~order.}
\end{equation*}
To deduce the main result of the present paper, it suffices to show that the uniform Kruskal theorem implies the Bachmann-Howard principle.

Recall that a function $f:X\to Y$ between partial orders is a quasi embedding if $f(x)\leq_Y f(x')$ implies $x\leq_X x'$. By a quasi embedding $\nu:D\Rightarrow W$ of an \mbox{LO-dilator}~$D$ into a PO-dilator~$W$ we shall mean a natural family of quasi embeddings $\nu_X:D(X)\to W(X)$ for all linear orders~$X$. In Section~\ref{sect:lower-bounds} of this paper we show that any quasi embedding $D\Rightarrow W$ induces a quasi embedding of the linear order $\vartheta(D)$ into the partial order~$\T W$. It follows that $\vartheta(D)$ is a well order if $\T W$ is a well partial order. In Section~\ref{sect:extend-dilator} we show how to construct a PO-dilator $W_D$ and a quasi embedding $D\Rightarrow W_D$ for a given LO-dilator~$D$. Our main technical result proves that $W_D$ preserves well partial orders if $D$ preserves well orders. Given that $W_D$ is a WPO-dilator, the uniform Kruskal theorem tells us that $\T W_D$ is a well partial order. As we have seen, it follows that $\vartheta(D)$ is a well order. This establishes the Bachmann-Howard principle and completes the proof of our main result.

The results of Section~\ref{sect:lower-bounds} also yield the following principle, which allows us to re-establish several known independence results and guides our search for new ones. This principle has certainly been known as a heuristic, which has been used in several concrete applications. However, the precise statement and proof of the general principle seem to be new.
\begin{equation*}
\parbox{0.97\textwidth}{\textbf{Uniform Independence Principle.} Consider a computable WPO-dilator~$W$ and a sound theory~$\mathbf T\supseteq\rca_0$. To show that the statement ``$\T W$ is a well partial order" is independent of~$\mathbf T$, it suffices to find a computable \mbox{WO-dilator}~$D$ such that (i)~$\mathbf T$~proves that there is a quasi embedding $D\Rightarrow W$ and (ii) the order~type of $\vartheta(D)$ is at least as big as the proof theoretic ordinal of~$\mathbf T$.}
\end{equation*}
In~\cite{freund_predicative-collapsing} the order type of~$\vartheta(D)$ has been determined for some natural WO-dilators~$D$. For example, it has been shown that  $D(X)=1+2\times X^2$ (with an appropriate linear order) leads to~$\vartheta(D)\cong\Gamma_0$. By the uniform independence principle we can now recover Friedman's result~\cite{friedman-kruskal-notes} that Kruskal's theorem for binary trees with two labels is independent of the theory~$\mathbf{ATR}_0$ of arithmetical transfinite recursion.

\section{Dilators on partial orders}\label{sect:formalize-po-dilators}

Girard~\cite{girard-pi2} has singled out uniform transformations of (well founded) linear orders, which he calls dilators. In the present section we introduce \mbox{PO-dilators} as uniform transformations of partial orders. We also show how \mbox{PO-dilators} can be represented in the setting of reverse mathematics.

The notions of (well) partial order and quasi embedding have been recalled in the introduction. We will consider them as part of the following structure:

\begin{definition}
The category PO of partial orders has the partial orders as objects and the quasi embeddings as morphisms.
\end{definition}

Recall that a quasi embedding $f:X\to Y$ is an embedding if $x\leq_X x'$ is equivalent to $f(x)\leq_Y f(x')$. We say that a functor $W:\po\to\po$ preserves embeddings if $W(f):W(X)\to W(Y)$ is an embedding whenever $f:X\to Y$ is one. If $Y$ is linear, then any quasi embedding $f:X\to Y$ is an embedding. Hence the category LO of linear orders and embeddings is a full subcategory of~PO.

Let us write $[\cdot]^{<\omega}$ for the finite subset functor on the category of sets, given by
\begin{align*}
[X]^{<\omega}&=\text{``the set of finite subsets of~$X$"},\\
[f]^{<\omega}(a)&=\{f(x)\,|\,x\in a\}\quad\text{(for $f:X\to Y$ and $a\in[X]^{<\omega}$)}.
\end{align*}
We will also apply $[\cdot]^{<\omega}$ to partial orders, omitting the forgetful functor to the underlying sets. Conversely, a subset~$a\subseteq X$ of a partial order~$X$ will often be considered as a suborder (such that the inclusion $a\hookrightarrow X$ is an embedding).

\begin{definition}\label{def:po-dilator}
A class-sized PO-dilator consists of
\begin{enumerate}[label=(\roman*)]
\item a functor $W:\po\to\po$ that preserves embeddings and
\item a natural transformation $\supp:W\Rightarrow[\cdot]^{<\omega}$ such that the support condition
\begin{equation*}
\rng(W(f))=\{\sigma\in W(Y)\,|\,\supp_Y(\sigma)\subseteq\rng(f)\}.
\end{equation*}
holds whenever $f:X\to Y$ is an embedding (not just a quasi embedding).
\end{enumerate}
If $W(X)$ is a wpo for every wpo~$X$, then $W$ is called a class-sized WPO-dilator.
\end{definition}

In the second part of this section we will show that class-sized PO-dilators can be represented by certain set-sized objects, which we call coded PO-dilators. The latter allow us to make general statements about PO-dilators without quantifying over proper classes. Note that class quantifiers are needed if one wants to state the equivalence between class-sized and coded PO-dilators. For us, this equivalence (and the notion of class-sized PO-dilator itself) will only play a heuristic role.

Let us state the corresponding definition for the linear case: A (class-sized) LO-dilator consists of a functor~$D:\lo\to\lo$ and a natural transformation $\supp:D\Rightarrow[\cdot]^{<\omega}$ such that the support condition from clause~(ii) of Definition~\ref{def:po-dilator} holds for any embedding $f:X\to Y$ of linear orders. If $D(X)$ is a well order for any well order~$X$, then $D$ is a (class-sized) WO-dilator. We point out that our \mbox{WO-dilators} coincide with Girard's dilators: The support condition ensures that any WO-dilator preserves direct limits and pullbacks, as demanded by Girard. Conversely, a functor that preserves direct limits and pullbacks can be equipped with support functions, which are unique and in particular natural (see~\cite[Remark~2.2.2]{freund-thesis} for a detailed verification). Our LO-dilators coincide with the prae-dilators considered in~\cite{freund-equivalence,freund-computable}. Finally, Girard's pre-dilators (note the different spelling) coincide with the monotone LO-dilators of Definition~\ref{def:monotone-dilators} below.

Concerning the support condition from Definition~\ref{def:po-dilator}, we observe that one inclusion is automatic: If we have $\sigma\in\rng(W(f))$, say $\sigma=W(f)(\sigma_0)$ with $\sigma_0\in W(X)$, then the naturality of supports yields
\begin{equation*}
\supp_Y(\sigma)=[f]^{<\omega}(\supp_X(\sigma_0))\subseteq\rng(f).
\end{equation*}
The other inclusion ensures that elements and inequalities in~$W(X)$ can only depend on finite suborders of~$X$: Given $\sigma,\tau\in W(X)$, we put $a:=\supp_X(\sigma)\cup\supp_X(\tau)$. Due to the support condition we can write $\sigma=W(\iota_a^X)(\sigma_0)$ and $\tau=W(\iota_a^X)(\tau_0)$, where $\iota_a^X:a\hookrightarrow X$ is the inclusion. Since the PO-dilator~$W$ preserves embeddings, we learn that $\sigma\leq_{W(X)}\tau$ is equivalent to $\sigma_0\leq_{W(a)}\tau_0$. As we shall see, this ensures that class-sized PO-dilators are essentially determined by their restrictions to the category of finite partial orders.

Important examples of PO-dilators arise if we take $W(X)$ to be some collection of finite graphs (e.\,g.~lists or trees) with labels in~$X$. We will see that the following PO-dilators are connected to Higman's lemma.

\begin{example}\label{ex:po-dilator-aca}
Given a partial order~$Z$, we define a class-sized PO-dilator $W_Z$ as follows: For each partial order~$X$, the underlying set of $W_Z(X)$ is given as
\begin{equation*}
W_Z(X)=1+Z\times X=\{0\}\cup\{(z,x)\,|\,z\in Z\text{ and }x\in X\}.
\end{equation*}
To order this set we declare that the only inequalities are $0\leq_{W_Z(X)}0$ and
\begin{equation*}
(z,x)\leq_{W_Z(X)}(z',x')\quad\text{for $z\leq_Z z'$ and $x\leq_X x'$}.
\end{equation*}
Given a quasi embedding $f:X\to Y$, we define $W_Z(f):W_Z(X)\to W_Z(Y)$ by
\begin{equation*}
W_Z(f)(0)=0\quad\text{and}\quad W_Z(f)((z,x))=(z,f(x)).
\end{equation*}
One readily checks that this turns $W_Z$ into a functor that preserves embeddings. To obtain a PO-dilator, we define support functions $\supp_X:W_Z(X)\to[X]^{<\omega}$ by
\begin{equation*}
\supp_X(0)=\emptyset\quad\text{and}\quad\supp_X((z,x))=\{x\}.
\end{equation*}
Naturality is satisfied in view of
\begin{multline*}
\supp_Y\circ W_Z(f)((z,x))=\supp_Y((z,f(x)))=\{f(x)\}=\\
=[f]^{<\omega}(\{x\})=[f]^{<\omega}\circ\supp_X((z,x)).
\end{multline*}
To verify the support condition we consider an embedding $f:X\to Y$ and an element $(z,y)\in W_Z(Y)$. If we have $\{y\}=\supp_Y((z,y))\subseteq\rng(f)$, then we may write $y=f(x)$ with $x\in X$. The element $(z,x)\in W_Z(X)$ witnesses
\begin{equation*}
(z,y)=(z,f(x))=W_Z(f)((z,x))\in\rng(W_Z(f)).
\end{equation*}
Let us also note that $W_Z$ is a WPO-dilator when $Z$ is a well partial order.
\end{example}

Girard~\cite{girard-pi2} has observed that LO-dilators are determined by their restrictions to the category of finite linear orders, which makes it possible to represent them in second order arithmetic. The details of such a representation have been worked out in~\cite{freund-computable}. In the following we present a similar representation for PO-dilators. 

In the linear case, each isomorphism class of finite orders contains a canonical representative of the form $n=\{0,\dots,n-1\}$ (with the usual order between natural numbers). To obtain representatives for finite partial orders, we observe that an order on a finite subset of~$\mathbb N$ can itself be coded by a natural number. Having fixed a suitable coding, we define $\poz$ as the set of all partial orders~$(a,\leq_a)$ with finite underlying set $a\subseteq\mathbb N$ that are not isomorphic to an order with smaller code. Elements of $\poz$ will be called coded partial orders. Crucially, any finite partial order $a$ (with arbitrary underlying set) is isomorphic to a unique order~$|a|\in\poz$. To ensure that our presentation is compatible with previous work on LO-dilators, we assume $n=|n|\in\poz$ for the linear orders $n=\{0,\dots,n-1\}$.

We also write $\poz$ for the category of coded partial orders and quasi embeddings. It will be important that $a\mapsto|a|$ is an equivalence between $\poz$ and the category of all finite partial orders. To witness this fact we fix an order isomorphism
\begin{equation*}
\en_a:|a|\xrightarrow{\cong} a
\end{equation*}
for each finite partial order~$a$. In the context of second order arithmetic this does not require choice, since we can pick the finite isomorphism with the smallest code. Given a function $f:a\to b$ between finite partial orders, we can define $|f|:|a|\to|b|$ as the unique function with
\begin{equation*}
\en_b\circ|f|=f\circ\en_a.
\end{equation*}
Note that $|f|$ is a (quasi) embedding whenever the same holds for~$f$. For $g:b\to c$ we have $g\circ f\circ\en_a=g\circ\en_b\circ|f|=\en_c\circ|g|\circ|f|$, so that uniqueness yields $|g|\circ|f|=|g\circ f|$. Similarly, we see that $|\id_a|$ is the identity on~$|a|$ if $\id_a$ is the identity on~$a$. In the case where~$a$ is linear, the function $\en_a:|a|=\{0,\dots,|a|-1\}\to a$ is determined as the unique increasing enumeration. If $f:a\to b$ is an embedding between linear orders, then $|f|:|a|\to|b|$ is the same function as in~\cite{freund-computable}.

Now consider a functor $W:\poz\to\po$ such that the orders~$W(a)$ for $a\in\poz$ are countable. Up to natural equivalence, we may assume that the underlying set of each order~$W(a)$ is a subset of $\mathbb N$. Coding finite structures by natural numbers, we can then represent~$W$ by the sets
\begin{align*}
W^0&=\{\langle a,\sigma,\tau\rangle\,|\,a\in\poz\text{ and }\sigma,\tau\in W(a)\text{ and }\sigma\leq_{W(a)}\tau\}\subseteq\mathbb N,\\
W^1&=\{\langle f,\sigma,\tau\rangle\,|\,f\text{ is a morphism in $\poz$ and }W(f)(\sigma)=\tau\}\subseteq\mathbb N.
\end{align*}
When we work in the base theory~$\rca_0$ of reverse mathematics, a functor~$W$ is indeed given by sets~$W^0,W^1\subseteq\mathbb N$. An expression such as $\sigma\in W(a)$ must then be read as an abbreviation for the $\Delta^0_1$-formula $\langle a,\sigma,\sigma\rangle\in W^0$. Similarly, a set
\begin{equation*}
\supp=\{\langle a,\sigma,b\rangle\,|\,\supp_a(\sigma)=b\}\subseteq\mathbb N
\end{equation*}
can encode a natural transformation $\supp:W\Rightarrow[\cdot]^{<\omega}$. When the following definition is invoked within $\rca_0$, it is assumed that we are concerned with underlying sets $W(a)\subseteq\mathbb N$. From the viewpoint of a different base theory (e.\,g.~set theory) one may also wish to consider the case where $W(a)$ is uncountable.

\begin{definition}[$\rca_0$]\label{def:coded-po-dilator}
A coded PO-dilator consists of
\begin{enumerate}[label=(\roman*)]
\item a functor $W:\poz\to\po$ that preserves embeddings and
\item a natural transformation $\supp:W\Rightarrow[\cdot]^{<\omega}$ such that the support condition
\begin{equation*}
\rng(W(f))=\{\sigma\in W(b)\,|\,\supp_b(\sigma)\subseteq\rng(f)\}.
\end{equation*}
holds whenever $f:a\to b$ is an embedding between coded partial orders.
\end{enumerate}
\end{definition}

In order to define coded WPO-dilators, we must explain how coded PO-dilators can be extended beyond finite orders (as it is not enough to demand that $W(a)$ is a wpo for every finite wpo~$a$). First, we want to show how the coded PO-dilators relate to the class-sized PO-dilators of Definition~\ref{def:po-dilator}. One direction is immediate:

\begin{lemma}\label{lem:class-to-coded}
If $W$ is a class-sized PO-dilator, then its obvious restriction $W\!\restriction\!\poz$ is a coded PO-dilator.
\end{lemma}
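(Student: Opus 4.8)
The plan is to exploit that $\poz$ sits inside $\po$ as a (full, faithful) subcategory of finite partial orders with canonical underlying sets, so that a class-sized PO-dilator restricts to a functor on $\poz$ and each clause of Definition~\ref{def:coded-po-dilator} is obtained by merely restricting the corresponding clause of Definition~\ref{def:po-dilator} along this inclusion. In other words, the lemma is pure bookkeeping, and I would organize the verification around the three data of a coded PO-dilator: functoriality, preservation of embeddings, and support.

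First I would note that $W\!\restriction\!\poz$ is a well-defined functor $\poz\to\po$: every coded partial order is in particular a partial order, every quasi embedding between coded partial orders is a quasi embedding in~$\po$, and composition and identities are inherited. If one insists that $W\!\restriction\!\poz$ be literally coded by subsets of~$\mathbb N$ in the sense in which Definition~\ref{def:coded-po-dilator} is invoked inside~$\rca_0$, one first passes to a naturally equivalent functor whose values $W(a)$ are subsets of~$\mathbb N$ (each such $W(a)$ with $a$ finite may be assumed countable); this is harmless, since the class-sized notion serves only a heuristic purpose.

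Next I would check preservation of embeddings. The key observation is that whether $f\colon a\to b$ between coded partial orders is an embedding does not depend on whether $a,b$ are viewed as objects of $\poz$ or of~$\po$: in both cases it is the condition that $x\leq_a x'$ is equivalent to $f(x)\leq_b f(x')$. Hence an embedding in~$\poz$ is an embedding in~$\po$, so $W(f)$ is an embedding by clause~(i) of Definition~\ref{def:po-dilator}.

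Finally I would supply the support data, namely the family $\supp_a\colon W(a)\to[a]^{<\omega}$ for $a\in\poz$. Naturality of this restricted family is immediate, because each naturality square for a morphism $f\colon a\to b$ of~$\poz$ is already one of the naturality squares of $\supp\colon W\Rightarrow[\cdot]^{<\omega}$ on~$\po$. Likewise, for an embedding $f\colon a\to b$ of coded partial orders — which, as just noted, is an embedding in~$\po$ — the identity $\rng(W(f))=\{\sigma\in W(b)\mid\supp_b(\sigma)\subseteq\rng(f)\}$ is a direct instance of clause~(ii) of Definition~\ref{def:po-dilator}. This verifies both clauses of Definition~\ref{def:coded-po-dilator}. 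I do not expect any genuine obstacle; the only point calling for a word of care is the coding caveat mentioned above, i.e.\ arranging $W(a)\subseteq\mathbb N$, and even this is settled by an innocuous change of representative.
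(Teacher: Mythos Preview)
Your proposal is correct; the paper states this lemma without proof, treating the restriction as immediate, and your verification of functoriality, preservation of embeddings, and the support condition is exactly the routine bookkeeping the paper leaves implicit.
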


The lemma cannot be stated in~$\rca_0$, where the general notion of class-sized dilator is not available. Nevertheless, we can use concrete instances of the result:

\begin{example}\label{ex:coded-po-dilator}
The theory $\rca_0$ recognizes that the computable transformations $W_Z$ from Example~\ref{ex:po-dilator-aca} satisfy the defining properties of class-sized PO-dilators. It also shows that each restriction~$W_Z\!\restriction\!\poz$ exists as a set, as the category~$\poz$ is computable. Further\-more, $\rca_0$ recognizes that ~$W_Z\!\restriction\!\poz$ is a coded PO-dilator.
\end{example}

Our next goal is to extend a coded PO-dilator~$W$ into a class-sized PO-dilator~$\overline W$. The following notion, which Girard~\cite{girard-intro} has considered in the linear case, will be fundamental. The definition makes sense for coded and class-sized PO-dilators.

\begin{definition}[$\rca_0$]
The trace $\trace(W)$ of a PO-dilator~$W$ consists of all pairs $(a,\sigma)$ of a coded partial order~$a\in\poz$ and an element $\sigma\in W(a)$ that satisfy the minimality condition $\supp_a(\sigma)=a$.
\end{definition}

Intuitively speaking, the minimality condition expresses that $\sigma$ depends on all elements of~$a$. As we shall see, this ensures that certain representations are unique. The following observation is required for the definition of $\overline W(f)$ below.

\begin{lemma}[$\rca_0$]\label{lem:trace-preserved}
Consider a coded PO-dilator $W$. If $g:a\to b$ is a surjective quasi embedding between finite partial orders, then we have
\begin{equation*}
(|a|,\sigma)\in\trace(W)\quad\Rightarrow\quad (|b|,W(|g|)(\sigma))\in\trace(W).
\end{equation*}
\end{lemma}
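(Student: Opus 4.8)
The plan is to verify the two parts of the trace condition for $(|b|,W(|g|)(\sigma))$ directly, the first part being immediate and the second part following from a single application of the naturality of $\supp$ together with the surjectivity of $g$.

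First I would observe that since $g\colon a\to b$ is a quasi embedding of finite partial orders, the induced map $|g|\colon|a|\to|b|$ is again a quasi embedding (as noted after the definition of $|f|$), hence a morphism in $\poz$; consequently $W(|g|)\colon W(|a|)\to W(|b|)$ is a morphism in $\po$, so $W(|g|)(\sigma)$ is a genuine element of $W(|b|)$. This takes care of the first requirement in the definition of $\trace(W)$. For the minimality condition I would apply the naturality square of $\supp\colon W\Rightarrow[\cdot]^{<\omega}$ to the morphism $|g|$ and evaluate at $\sigma$, obtaining
\[
\supp_{|b|}\bigl(W(|g|)(\sigma)\bigr)=[|g|]^{<\omega}\bigl(\supp_{|a|}(\sigma)\bigr).
\]
Here I only use naturality of supports, which holds for every morphism of $\poz$ (i.e.\ every quasi embedding), not the support condition from Definition~\ref{def:coded-po-dilator}(ii), which would require $g$ to be an embedding — and indeed a surjective quasi embedding need not be an embedding.

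Since $(|a|,\sigma)\in\trace(W)$ means $\supp_{|a|}(\sigma)=|a|$, the right-hand side equals $[|g|]^{<\omega}(|a|)=\rng(|g|)$, so it remains to check $\rng(|g|)=|b|$. This I would deduce from the surjectivity of $g$ via the defining relation $\en_b\circ|g|=g\circ\en_a$: the isomorphism $\en_a$ maps $|a|$ onto $a$ and $g$ maps $a$ onto $b$, so $g\circ\en_a$ is onto $b$; as $\en_b$ is injective, $|g|$ must be onto $|b|$. Hence $\supp_{|b|}(W(|g|)(\sigma))=|b|$, which is precisely $(|b|,W(|g|)(\sigma))\in\trace(W)$.

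I do not expect a genuine obstacle in this argument; it is one use of naturality plus elementary bookkeeping. The only point demanding a little care is the passage between $g$ and its canonical representative $|g|$, that is, keeping track of the enumeration isomorphisms $\en_a$ and $\en_b$ when transferring surjectivity — but this is routine and formalizes without difficulty in $\rca_0$, since $W^0$, $W^1$, $\supp$ and the finite functions $g$, $|g|$ are all given by sets and the relevant computations are $\Delta^0_1$.
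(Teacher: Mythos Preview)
Your proof is correct and follows essentially the same approach as the paper: both use naturality of $\supp$ at the morphism $|g|$ together with $\supp_{|a|}(\sigma)=|a|$, and then exploit surjectivity of $g$ via the defining relation $\en_b\circ|g|=g\circ\en_a$ to conclude that the resulting support is all of~$|b|$. The only cosmetic difference is that the paper carries out the computation in a single chain of equalities after applying $[\en_b]^{<\omega}$, whereas you split it into ``support equals $\rng(|g|)$'' followed by ``$\rng(|g|)=|b|$''.
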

\begin{proof}
Note that we can form $W(|g|)$, since $|g|:|a|\to|b|$ is a morphism in~$\poz$. The latter is caracterized by the equality $\en_b\circ |g|=g\circ\en_a$, where $\en_a:|a|\to a$ and $\en_b:|b|\to b$ are the isomorphisms fixed above. Now the minimality condition $\supp_{|a|}(\sigma)=|a|$ from the assumption $(|a|,\sigma)\in\trace(W)$ implies
\begin{multline*}
[\en_b]^{<\omega}\circ\supp_{|b|}(W(|g|)(\sigma))=[\en_b\circ |g|]^{<\omega}\circ\supp_{|a|}(\sigma)=\\
=[g\circ\en_a]^{<\omega}(|a|)=[g]^{<\omega}(a)=b.
\end{multline*}
This yields $\supp_{|b|}(W(|g|)(\sigma))=|b|$, as required for $(|b|,W(|g|)(\sigma))\in\trace(W)$.
\end{proof}

In order to define the extension of a coded PO-dilator, we fix some notation: When we have $a\subseteq X$, we write $\iota_a^X:a\hookrightarrow X$ for the inclusion. If $X$ is a partial order, then we consider~$a$ as a suborder, so that $\iota_a^X$ is an embedding. Given a quasi embedding $f:X\to Y$ and a finite suborder~$a\subseteq X$, we write $f\!\restriction\!a:a\to[f]^{<\omega}(a)$ for the restriction of~$f$. Here the codomain $[f]^{<\omega}(a)$ is considered as a suborder of~$Y$, so that $f\!\restriction\!a$ is a surjective quasi embedding.

\begin{definition}[$\rca_0$]\label{def:extend-po-dilator}
Let $W$ be a coded PO-dilator. For each partial order~$X$ we define a set $\overline W(X)$ and a binary relation $\leq_{\overline W(X)}$ by
\begin{gather*}
\overline W(X)=\{(a,\sigma)\,|\,a\in[X]^{<\omega}\text{ and }(|a|,\sigma)\in\trace(W)\},\\
(a,\sigma)\leq_{\overline W(X)}(b,\tau)\,\Leftrightarrow\, W(|\iota_a^{a\cup b}|)(\sigma)\leq_{W(|a\cup b|)} W(|\iota_b^{a\cup b}|)(\tau),
\end{gather*}
where $a\cup b$ is considered as a suborder of~$X$. Given a quasi embedding $f:X\to Y$, we define a function $\overline W(f):\overline W(X)\to\overline W(Y)$ by setting
\begin{equation*}
\overline W(f)((a,\sigma))=([f]^{<\omega}(a),W(|f\!\restriction\!a|)(\sigma)).
\end{equation*}
To define a family of functions $\overline\supp_X:\overline W(X)\to[X]^{<\omega}$ we put
\begin{equation*}
\overline\supp_X((a,\sigma))=a
\end{equation*}
for each partial order~$X$.
\end{definition}

Note that the following result can be stated in $\rca_0$, since it only involves class-sized dilators that are explicitly constructed from coded ones.

\begin{theorem}[$\rca_0$]\label{thm:extend-po-dilator}
If $W$ is a coded PO-dilator, then its extension $\overline W$ is a class-sized PO-dilator.
\end{theorem}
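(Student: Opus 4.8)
The plan is to verify, one requirement at a time, that $\overline W$ together with $\overline\supp$ satisfies Definition~\ref{def:po-dilator}: that $\overline W$ is a functor $\po\to\po$ which preserves embeddings, that $\overline\supp$ is a natural transformation $\overline W\Rightarrow[\cdot]^{<\omega}$, and that the support condition holds for every embedding $f:X\to Y$. Since $\overline W$ is constructed from the coded data defining $W$ by operations that only look at finite suborders of the arguments, nothing beyond $\Delta^0_1$-comprehension is involved and the whole argument goes through in $\rca_0$. The one tool I would isolate first and then reuse everywhere is a \emph{base change lemma}: for finite suborders with $a\cup b\subseteq c\subseteq X$, the comparison $(a,\sigma)\leq_{\overline W(X)}(b,\tau)$ is equivalent to $W(|\iota_a^c|)(\sigma)\leq_{W(|c|)}W(|\iota_b^c|)(\tau)$. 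This holds because $\iota_a^c=\iota_{a\cup b}^c\circ\iota_a^{a\cup b}$ (and similarly for $b$), so functoriality of $|\cdot|$ and of $W$ rewrites the proposed comparison as one lying in the image of $W(|\iota_{a\cup b}^c|)$, and that map is an embedding since $W$ preserves embeddings, so it may be cancelled.

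Next I would check that each $\overline W(X)$ is a partial order. Reflexivity is immediate because $|\iota_a^a|$ is the identity; transitivity of $(a,\sigma)\leq(b,\tau)\leq(c,\rho)$ follows by evaluating all three comparisons inside $W(|a\cup b\cup c|)$ via base change and using transitivity there. Antisymmetry is the only clause with real content: base change over $a\cup b$ and antisymmetry of $W(|a\cup b|)$ yield a single element $\rho\in W(|a\cup b|)$ with $\rho=W(|\iota_a^{a\cup b}|)(\sigma)=W(|\iota_b^{a\cup b}|)(\tau)$, and pushing the trace conditions $\supp_{|a|}(\sigma)=|a|$, $\supp_{|b|}(\tau)=|b|$ through naturality of $\supp$ shows that $\supp_{|a\cup b|}(\rho)$, transported along $\en_{a\cup b}$, equals both $a$ and $b$; hence $a=b$, and then $\iota_a^{a\cup b}=\id$ forces $\sigma=\tau$. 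For functoriality I would note that $\overline W(f)$ lands in $\overline W(Y)$ by Lemma~\ref{lem:trace-preserved}, since $f\!\restriction\!a:a\to[f]^{<\omega}(a)$ is a surjective quasi embedding, while $\overline W(\id)=\id$ and $\overline W(g\circ f)=\overline W(g)\circ\overline W(f)$ reduce, after applying $|\cdot|$ and $W$, to the identities $\id_X\!\restriction\!a=\id_a$ and $(g\circ f)\!\restriction\!a=(g\!\restriction\![f]^{<\omega}(a))\circ(f\!\restriction\!a)$.

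To see that $\overline W(f)$ is a morphism I would put $g:=f\!\restriction\!(a\cup b):a\cup b\to[f]^{<\omega}(a\cup b)$ and use the commuting squares $\iota_{[f]^{<\omega}(a)}^{[f]^{<\omega}(a\cup b)}\circ(f\!\restriction\!a)=g\circ\iota_a^{a\cup b}$ and its analogue for $b$; base change over $[f]^{<\omega}(a\cup b)$ then turns $\overline W(f)((a,\sigma))\leq\overline W(f)((b,\tau))$ into $W(|g|)(\alpha)\leq W(|g|)(\beta)$ and $(a,\sigma)\leq(b,\tau)$ into $\alpha\leq\beta$, where $\alpha=W(|\iota_a^{a\cup b}|)(\sigma)$ and $\beta=W(|\iota_b^{a\cup b}|)(\tau)$. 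Since $W$ sends the quasi embedding $|g|$ to a quasi embedding, $\overline W(f)$ is a quasi embedding; and if $f$ is an embedding then $g$, hence $|g|$, hence $W(|g|)$, is an embedding, so $\overline W(f)$ is one too. Naturality of $\overline\supp$ is immediate from the definition. For the support condition with $f:X\to Y$ an embedding, the inclusion $\rng(\overline W(f))\subseteq\{(b,\tau):b\subseteq\rng(f)\}$ is the automatic one noted after Definition~\ref{def:po-dilator}; for the converse, given $(b,\tau)\in\overline W(Y)$ with $b\subseteq\rng(f)$, I would set $a:=f^{-1}(b)$, so that $f\!\restriction\!a:a\to b$ is an order isomorphism, take $\sigma:=W(|f\!\restriction\!a|)^{-1}(\tau)\in W(|a|)$, which lies in $\trace(W)$ by Lemma~\ref{lem:trace-preserved} applied to the inverse of $f\!\restriction\!a$, and then compute $\overline W(f)((a,\sigma))=([f]^{<\omega}(a),W(|f\!\restriction\!a|)(\sigma))=(b,\tau)$.

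The main obstacle is not any single step but the bookkeeping underlying the base change lemma: one has to set up the various commuting triangles and squares among the inclusions $\iota_a^{a\cup b}$, the restrictions $f\!\restriction\!a$, and the surjection $g$, and at each point carefully distinguish genuine embeddings from mere quasi embeddings, since $W$ is only guaranteed to preserve the former — this distinction is precisely what makes antisymmetry, the morphism property, and the support condition come out right.
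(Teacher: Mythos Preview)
Your proposal is correct and follows essentially the same route as the paper's proof: verify that $\overline W(X)$ is a partial order, that $\overline W$ acts functorially and sends (quasi) embeddings to (quasi) embeddings, and that $\overline\supp$ is natural and satisfies the support condition. Your explicit ``base change lemma'' is exactly the computation the paper performs inline when proving transitivity (factoring through $W(|a\cup b\cup c|)$ and cancelling the embedding $W(|\iota_{a\cup c}^{a\cup b\cup c}|)$), and packaging it once is a clean organizational choice. The only substantive deviation is in the support condition: where the paper invokes clause~(ii) of Definition~\ref{def:coded-po-dilator} for the embedding $|f\!\restriction\!a_0|$ to produce a preimage $\sigma_0$, you instead observe that $f\!\restriction\!a$ is an order isomorphism (since $f$ is an embedding) and simply invert $W(|f\!\restriction\!a|)$; both arguments are valid, and yours is arguably more direct.
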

\begin{proof}
We begin by showing that $\overline W(X)$ is a partial order for any partial order~$X$. Reflexivity is readily established. In order to prove antisymmetry we must show that $W(|\iota_a^{a\cup b}|)(\sigma)=W(|\iota_b^{a\cup b}|)(\tau)$ implies $(a,\sigma)=(b,\tau)$. The minimality condition $\supp_{|a|}(\sigma)=|a|$ that is provided by $(|a|,\sigma)\in\trace(W)$ allows to recover $a$ as
\begin{multline*}
[\en_{a\cup b}]^{<\omega}\circ\supp_{|a\cup b|}(W(|\iota_a^{a\cup b}|)(\sigma))=[\en_{a\cup b}\circ|\iota_a^{a\cup b}|]^{<\omega}\circ\supp_{|a|}(\sigma)=\\ =[\iota_a^{a\cup b}\circ\en_a]^{<\omega}(|a|)=a.
\end{multline*}
As $b$ can be recovered in the same way, we see that $W(|\iota_a^{a\cup b}|)(\sigma)=W(|\iota_b^{a\cup b}|)(\tau)$ implies $a=b$. It follows that $W(|\iota_a^{a\cup b}|)$ and $W(|\iota_b^{a\cup b}|)$ are the same quasi embedding. Since quasi embeddings are injective, we can conclude $\sigma=\tau$ as well. To establish transitivity we assume $(a,\sigma)\leq_{\overline W(X)}(b,\tau)$ and $(b,\tau)\leq_{\overline W(X)}(c,\rho)$, or equivalently
\begin{equation*}
W(|\iota_a^{a\cup b}|)(\sigma)\leq_{W(|a\cup b|)} W(|\iota_b^{a\cup b}|)(\tau)\quad\text{and}\quad W(|\iota_b^{b\cup c}|)(\tau)\leq_{W(|b\cup c|)} W(|\iota_c^{b\cup c}|)(\rho).
\end{equation*}
Crucially, the condition that PO-dilators preserve embeddings allows us to deduce
\begin{multline*}
W(|\iota_a^{a\cup b\cup c}|)(\sigma)=W(|\iota_{a\cup b}^{a\cup b\cup c}|)\circ W(|\iota_a^{a\cup b}|)(\sigma)\leq_{W(|a\cup b\cup c|)}\\ \leq_{W(|a\cup b\cup c|)} W(|\iota_{a\cup b}^{a\cup b\cup c}|)\circ W(|\iota_b^{a\cup b}|)(\tau)=W(|\iota_b^{a\cup b\cup c}|)(\tau).
\end{multline*}
In the same way we get $W(|\iota_b^{a\cup b\cup c}|)(\tau)\leq_{W(|a\cup b\cup c|)} W(|\iota_c^{a\cup b\cup c}|)(\rho)$. We can then use transitivity in $W(|a\cup b\cup c|)$ to infer
\begin{multline*}
W(|\iota_{a\cup c}^{a\cup b\cup c}|)\circ W(|\iota_a^{a\cup c}|)(\sigma)=W(|\iota_a^{a\cup b\cup c}|)(\sigma)\leq_{W(|a\cup b\cup c|)}\\
\leq_{W(|a\cup b\cup c|)} W(|\iota_c^{a\cup b\cup c}|)(\rho)=W(|\iota_{a\cup c}^{a\cup b\cup c}|)\circ W(|\iota_c^{a\cup c}|)(\rho).
\end{multline*}
Since $W(|\iota_{a\cup c}^{a\cup b\cup c}|)$ is a quasi embedding, we get $W(|\iota_a^{a\cup c}|)(\sigma)\leq_{W(|a\cup c|)} W(|\iota_c^{a\cup c}|)(\rho)$. This amounts to $(a,\sigma)\leq_{\overline W(X)} (c,\rho)$, as required for transitivity. Next, we show that $\overline W(f)$ is a quasi embedding for any quasi embedding $f:X\to Y$. For this purpose we consider an inequality $\overline W(f)((a,\sigma))\leq_{\overline W(Y)}\overline W(f)((b,\tau))$, which amounts to
\begin{equation*}
W\left(\left|\iota_{[f]^{<\omega}(a)}^{[f]^{<\omega}(a\cup b)}\right|\circ \left|f\!\restriction\!a\right|\right)(\sigma)\leq_{W(|[f]^{<\omega}(a\cup b)|)}W\left(\left|\iota_{[f]^{<\omega}(b)}^{[f]^{<\omega}(a\cup b)}\right|\circ \left|f\!\restriction\!b\right|\right)(\tau).
\end{equation*}
In view of $\iota_{[f]^{<\omega}(a)}^{[f]^{<\omega}(a\cup b)}\circ (f\!\restriction\!a)=f\!\restriction\!(a\cup b)\circ\iota_a^{a\cup b}$ we can infer
\begin{equation*}
W(|f\!\restriction\!(a\cup b)|)\circ W(|\iota_a^{a\cup b}|)(\sigma)\leq_{W(|[f]^{<\omega}(a\cup b)|)} W(|f\!\restriction\!(a\cup b)|)\circ W(|\iota_b^{a\cup b}|)(\tau).
\end{equation*}
As $W(|f\!\restriction\!(a\cup b)|)$ is a quasi embedding, we get $W(|\iota_a^{a\cup b}|)(\sigma)\leq_{W(|a\cup b|)}W(|\iota_b^{a\cup b}|)(\tau)$. This amounts to $(a,\sigma)\leq_{\overline W(X)}(b,\tau)$, as required to show that $\overline W(f)$ is a quasi embedding. If $f$ is an embedding, then the argument can be read in reverse, which reveals that $\overline W(f)$ is an embedding as well. To see that $\overline W$ is a functor it suffices to recall that $[\cdot]^{<\omega}$ and $|\cdot|$ are functorial. The naturality of $\overline\supp$ is evident from the definition. It remains to verify the support condition from clause~(ii) of Definition~\ref{def:po-dilator}. For this purpose we consider an embedding $f:X\to Y$ and an element $(a,\sigma)\in\overline W(Y)$ with $a=\overline\supp_Y((a,\sigma))\subseteq\rng(f)$. Due to the latter, we can write $a=[f]^{<\omega}(a_0)$ with $a_0\in[X]^{<\omega}$. Since $f\!\restriction\!a_0:a_0\to a$ is surjective and we have $\en_a\circ|f\!\restriction\!a_0|=(f\!\restriction\!a_0)\circ\en_{a_0}$, we get $\rng(|f\!\restriction\!a_0|)=|a|=\supp_{|a|}(\sigma)$. Hence the support condition from clause~(ii) of Definition~\ref{def:coded-po-dilator} allows us to write
\begin{equation*}
\sigma=W(|f\!\restriction\!a_0|)(\sigma_0)\quad\text{with}\quad \sigma_0\in W(|a_0|).
\end{equation*}
Let us observe that we have
\begin{equation*}
[|f\!\restriction\!a_0|]^{<\omega}\circ\supp_{|a_0|}(\sigma_0)=\supp_{|a|}(W(|f\!\restriction\!a_0|)(\sigma_0))=\supp_{|a|}(\sigma)=|a|.
\end{equation*}
Since the quasi embedding $|f\!\restriction\!a_0|$ is injective, this implies $\supp_{|a_0|}(\sigma_0)=|a_0|$. We thus have $(|a_0|,\sigma_0)\in\trace(W)$ and hence $(a_0,\sigma_0)\in\overline W(X)$. By construction we get
\begin{equation*}
(a,\sigma)=([f]^{<\omega}(a_0),W(|f\!\restriction\!a_0|)(\sigma_0))=\overline W(f)((a_0,\sigma_0))\in\rng(\overline W(f)),
\end{equation*}
as required by the support condition.
\end{proof}

Given a class-sized PO-dilator~$W$ (with support $\supp^W:W\Rightarrow[\cdot]^{<\omega}$), Lemma~\ref{lem:class-to-coded} yields a coded PO-dilator~$W\!\restriction\!\poz$. By the previous theorem we get another class-sized PO-dilator $\overline{W\!\restriction\!\poz}$ (with support $\overline\supp^{W\restriction\poz}:\overline{W\!\restriction\!\poz}\Rightarrow[\cdot]^{<\omega}$). The following result shows that we have indeed reconstructed the original PO-dilator~$W$.

\begin{theorem}\label{thm:reconstruct-class-dilator}
Given any class-sized PO-dilator~$W$, one can construct a natural isomorphism $\eta:\overline{W\!\restriction\!\poz}\Rightarrow W$ such that we have $\overline\supp^{W\restriction\poz}=\supp^W\circ\eta$.
\end{theorem}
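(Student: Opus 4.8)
The plan is to define the component $\eta_X:\overline{W\restriction\poz}(X)\to W(X)$ by
\begin{equation*}
\eta_X((a,\sigma))=W(\iota_a^X\circ\en_a)(\sigma),
\end{equation*}
where $a\in[X]^{<\omega}$ is regarded as a suborder of $X$, so that $\iota_a^X\circ\en_a:|a|\to X$ is an embedding and $\sigma\in W(|a|)$ lies in the domain of $W(\iota_a^X\circ\en_a)$. The support compatibility is then immediate from the naturality of $\supp^W$: since $(|a|,\sigma)\in\trace(W\restriction\poz)$ gives $\supp^W_{|a|}(\sigma)=|a|$, we obtain
\begin{equation*}
\supp^W_X(\eta_X((a,\sigma)))=[\iota_a^X\circ\en_a]^{<\omega}(\supp^W_{|a|}(\sigma))=[\iota_a^X\circ\en_a]^{<\omega}(|a|)=a=\overline\supp^{W\restriction\poz}_X((a,\sigma)).
\end{equation*}
In particular $\supp^W_X\circ\eta_X$ recovers the first component, so from $\eta_X((a,\sigma))=\eta_X((b,\tau))$ we first read off $a=b$ and then, since the quasi embedding $W(\iota_a^X\circ\en_a)$ is injective, $\sigma=\tau$; thus each $\eta_X$ is injective.

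For surjectivity I would take $\sigma\in W(X)$, put $a:=\supp^W_X(\sigma)\in[X]^{<\omega}$, and apply the support condition of the class-sized dilator $W$ (clause~(ii) of Definition~\ref{def:po-dilator}) to the embedding $\iota_a^X$: from $\supp^W_X(\sigma)\subseteq\rng(\iota_a^X)=a$ we get $\sigma=W(\iota_a^X)(\sigma')$ for some $\sigma'\in W(a)$. Since $\en_a$ is an isomorphism, $W(\en_a)$ is invertible, and $\sigma_0:=W(\en_a^{-1})(\sigma')\in W(|a|)$ satisfies $\eta_X((a,\sigma_0))=W(\iota_a^X)(W(\en_a)(\sigma_0))=\sigma$. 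Finally one checks $(|a|,\sigma_0)\in\trace(W\restriction\poz)$: applying naturality of $\supp^W$ to $\sigma=W(\iota_a^X\circ\en_a)(\sigma_0)$ gives $[\iota_a^X\circ\en_a]^{<\omega}(\supp^W_{|a|}(\sigma_0))=\supp^W_X(\sigma)=a=[\iota_a^X\circ\en_a]^{<\omega}(|a|)$, and injectivity of $[\iota_a^X\circ\en_a]^{<\omega}$ forces $\supp^W_{|a|}(\sigma_0)=|a|$. Hence $(a,\sigma_0)\in\overline{W\restriction\poz}(X)$ is a preimage of $\sigma$.

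To see that $\eta_X$ preserves and reflects the order, I would factor $\iota_a^X\circ\en_a=\iota_{a\cup b}^X\circ\en_{a\cup b}\circ|\iota_a^{a\cup b}|$, which follows from the defining identity $\en_{a\cup b}\circ|\iota_a^{a\cup b}|=\iota_a^{a\cup b}\circ\en_a$. Writing $e:=W(\iota_{a\cup b}^X)\circ W(\en_{a\cup b})$, an embedding because $W$ preserves embeddings, one gets $\eta_X((a,\sigma))=e(W(|\iota_a^{a\cup b}|)(\sigma))$ and $\eta_X((b,\tau))=e(W(|\iota_b^{a\cup b}|)(\tau))$, so $\eta_X((a,\sigma))\leq_{W(X)}\eta_X((b,\tau))$ is equivalent to $W(|\iota_a^{a\cup b}|)(\sigma)\leq_{W(|a\cup b|)}W(|\iota_b^{a\cup b}|)(\tau)$, i.e. to $(a,\sigma)\leq_{\overline{W\restriction\poz}(X)}(b,\tau)$. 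For naturality, given a quasi embedding $f:X\to Y$ and $(a,\sigma)$, write $a':=[f]^{<\omega}(a)$ and combine $\en_{a'}\circ|f\!\restriction\!a|=(f\!\restriction\!a)\circ\en_a$ with $\iota_{a'}^Y\circ(f\!\restriction\!a)=f\circ\iota_a^X$ to obtain $\iota_{a'}^Y\circ\en_{a'}\circ|f\!\restriction\!a|=f\circ\iota_a^X\circ\en_a$; applying $W$ and functoriality then yields $\eta_Y(\overline{W\restriction\poz}(f)((a,\sigma)))=W(f)(\eta_X((a,\sigma)))$. Since each $\eta_X$ is a bijection that preserves and reflects the order it is an isomorphism in $\po$, its inverse is again a morphism of $\po$, and naturality of $\eta^{-1}$ follows formally, so $\eta$ is the desired natural isomorphism. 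The whole argument is essentially bookkeeping once $\eta$ is set up correctly; the one genuinely load-bearing point is that surjectivity of $\eta_X$ needs the non-automatic half of the support condition for the class-sized $W$, and the only place requiring real care is the repeated factorization of embeddings $|a|\to X$ through $|a\cup b|$ and of $|a|\to Y$ through $|a'|$, where one must use that $W$ preserves embeddings (not merely quasi embeddings).
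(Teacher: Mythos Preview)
Your proposal is correct and follows essentially the same approach as the paper: the same definition $\eta_X((a,\sigma))=W(\iota_a^X\circ\en_a)(\sigma)$, the same factorization through $|a\cup b|$ for the order equivalence, the same use of the support condition for surjectivity, and the same identity $\iota_{a'}^Y\circ\en_{a'}\circ|f\!\restriction\!a|=f\circ\iota_a^X\circ\en_a$ for naturality. The only cosmetic differences are that you establish support compatibility first and derive injectivity from it (the paper gets injectivity implicitly from the embedding property), and that you split the surjectivity argument into two steps via $W(\en_a^{-1})$ rather than applying the support condition directly to $\iota_a^X\circ\en_a$.
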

In the following proof, the condition $\overline\supp^{W\restriction\poz}=\supp^W\circ\eta$ is verified explicitly. More generally, one can show that any natural transformation between PO-dilators preserves supports. For the linear case this has been established by Girard~\cite{girard-pi2}. The detailed proof in~\cite[Lemma~2.17]{freund-rathjen_derivatives} is readily adapted to the partial case.
\begin{proof}
To simplify the notation we will write $\overline W$ rather than $\overline{W\!\restriction\!\poz}$. Recall that elements of $\overline W(X)$ are of the form $(a,\sigma)$ with $a\in[X]^{<\omega}$ and $\sigma\in W(|a|)$. For any partial order~$X$ we can thus define a function $\eta_X:\overline W(X)\to W(X)$ by setting
\begin{equation*}
\eta_X((a,\sigma))=W(\iota_a^X\circ\en_a)(\sigma).
\end{equation*} 
To see that this yields an embedding, we recall that $(a,\sigma)\leq_{\overline W(X)}(b,\sigma)$ amounts to
\begin{equation*}
W(|\iota_a^{a\cup b}|)(\sigma)\leq_{W(|a\cup b|)} W(|\iota_b^{a\cup b}|)(\tau).
\end{equation*}
Since PO-dilators preserve embeddings, this is equivalent to
\begin{equation*}
W(\iota_{a\cup b}^X\circ\en_{a\cup b}\circ|\iota_a^{a\cup b}|)(\sigma)\leq_{W(X)}W(\iota_{a\cup b}^X\circ\en_{a\cup b}\circ|\iota_b^{a\cup b}|)(\tau).
\end{equation*}
In view of $\iota_{a\cup b}^X\circ\en_{a\cup b}\circ|\iota_a^{a\cup b}|=\iota_{a\cup b}^X\circ\iota_a^{a\cup b}\circ\en_a=\iota_a^X\circ\en_a$ we can conclude that the inequality $(a,\sigma)\leq_{\overline W(X)}(b,\sigma)$ is equivalent to
\begin{equation*}
\eta_X((a,\sigma))=W(\iota_a^X\circ\en_a)(\sigma)\leq_{W(X)}W(\iota_b^X\circ\en_b)(\tau)=\eta_X((b,\tau)),
\end{equation*}
as desired. To conclude that $\eta_X$ is an isomorphism it remains to prove surjectivity. Given an arbitrary element $\sigma\in W(X)$, we set $a:=\supp_X(\sigma)$. Since $\iota_a^X\circ\en_a$ has range $a$, the support condition yields $\sigma=W(\iota_a^X\circ\en_a)(\sigma_0)$ for some $\sigma_0\in W(|a|)$. Due to the naturality of supports, we have
\begin{equation*}
[\iota_a^X\circ\en_a]^{<\omega}\circ\supp_{|a|}(\sigma_0)=\supp_X(W(\iota_a^X\circ\en_a)(\sigma_0))=\supp_X(\sigma)=a.
\end{equation*}
As $\iota_a^X\circ\en_a$ is injective, this implies $\supp_{|a|}(\sigma_0)=|a|$. Thus we get $(|a|,\sigma_0)\in\trace(W)$ and hence $(a,\sigma_0)\in\overline W(X)$. By construction we have $\eta_X((a,\sigma_0))=\sigma$, as needed. Let us now show that $\eta$ is natural. For a quasi embedding $f:X\to Y$ and any finite suborder $a\subseteq X$ we have
\begin{equation*}
\iota_{[f]^{<\omega}(a)}^Y\circ\en_{[f]^{<\omega}(a)}\circ|f\!\restriction\!a|=\iota_{[f]^{<\omega}(a)}^Y\circ(f\!\restriction\!a)\circ\en_a=f\circ\iota_a^X\circ\en_a.
\end{equation*}
Given an element $(a,\sigma)\in\overline W(X)$, we now obtain
\begin{multline*}
\eta_Y\circ\overline W(f)((a,\sigma))=\eta_Y(([f]^{<\omega}(a), W(|f\!\restriction\!a|)(\sigma)))=\\
=W(\iota_{[f]^{<\omega}(a)}^Y\circ\en_{[f]^{<\omega}(a)}\circ|f\!\restriction\!a|)(\sigma)=W(f\circ\iota_a^X\circ\en_a)(\sigma)=W(f)\circ\eta_X((a,\sigma)).
\end{multline*}
Finally, we verify that $\eta$ preserves supports. Recall that any element $(a,\sigma)\in\overline W(X)$ satisfies the minimality condition~$\supp^W_{|a|}(\sigma)=|a|$. We can deduce
\begin{multline*}
\supp^W_X\circ\eta_X((a,\sigma))=\supp^W_X(W(\iota_a^X\circ\en_a)(\sigma))=[\iota_a^X\circ\en_a]^{<\omega}\circ\supp^W_{|a|}(\sigma)=\\
=[\iota_a^X\circ\en_a]^{<\omega}(|a|)=a=\overline\supp^{W\restriction\poz}_X((a,\sigma)),
\end{multline*}
as the theorem claims.
\end{proof}

In the following we revert to questions of well partial orderedness.

\begin{definition}[$\rca_0$]\label{def:coded-wpo-dilator}
A coded PO-dilator~$W$ is called a coded WPO-dilator if $\overline W(X)$ is a well partial order (wpo) for any wpo~$X$.
\end{definition}

If the previous definition is evaluated in~$\rca_0$, then we can only consider countable orders~$X$ (with underlying set~$X\subseteq\mathbb N$). The following result shows that this restriction is harmless: the notion of coded WPO-dilator does not change its meaning when we pass to a more expressive setting. We point out that Girard~\cite{girard-pi2} has established the same result for the linear case.

\begin{proposition}\label{prop:dilator-countable-wpo}
Consider a class-sized PO-dilator~$W$. If $W(X)$ is a wpo for every countable wpo~$X$, then the same holds when~$X$ is an uncountable wpo.
\end{proposition}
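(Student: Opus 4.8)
The plan is to argue by contraposition, using the fact that elements and inequalities in $W(X)$ depend only on finite suborders of~$X$ (as explained right after Definition~\ref{def:po-dilator}). Suppose $X$ is a wpo of arbitrary cardinality for which $W(X)$ fails to be a wpo, and fix an infinite bad sequence $\sigma_0,\sigma_1,\dots$ in~$W(X)$. First I would pass to a countable suborder: put $a_i:=\supp_X(\sigma_i)\in[X]^{<\omega}$ and let $X_0:=\bigcup_{i\in\omega}a_i$, which is a countable subset of~$X$ (a countable union of finite sets). Considered as a suborder, $X_0$ is itself a wpo, since any infinite bad sequence in~$X_0$ would also be one in~$X$.

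Next I would lift the bad sequence to $W(X_0)$. For each~$i$ we have $\supp_X(\sigma_i)=a_i\subseteq X_0=\rng(\iota_{X_0}^X)$, and the inclusion $\iota_{X_0}^X:X_0\hookrightarrow X$ is an embedding (not merely a quasi embedding). Hence the support condition from clause~(ii) of Definition~\ref{def:po-dilator} provides elements $\sigma_i'\in W(X_0)$ with $W(\iota_{X_0}^X)(\sigma_i')=\sigma_i$. Since $W$ preserves embeddings, $W(\iota_{X_0}^X):W(X_0)\to W(X)$ is an embedding, so $\sigma_i'\leq_{W(X_0)}\sigma_j'$ would imply $\sigma_i\leq_{W(X)}\sigma_j$. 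As $\sigma_0,\sigma_1,\dots$ is bad, it follows that $\sigma_0',\sigma_1',\dots$ is a bad sequence in~$W(X_0)$. Thus $W(X_0)$ is not a wpo, and since $X_0$ is a countable wpo this contradicts the hypothesis, completing the proof.

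I do not expect a serious obstacle here: the argument is a direct application of finite support, and the only points to verify — that a countable union of finite sets is countable, that a suborder of a wpo is a wpo, and that the inclusion of a suborder is an embedding — are all routine. The one thing worth emphasising is that the support condition is invoked in the direction that is \emph{not} automatic; this is precisely why the statement needs $W$ to be a genuine PO-dilator, equipped with natural supports, rather than merely an embedding-preserving functor. (As noted in the excerpt, the analogous fact for the linear case is due to Girard, and the present proof is the obvious adaptation.)
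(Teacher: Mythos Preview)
Your proof is correct and essentially identical to the paper's: both pass to the countable suborder $\bigcup_i\supp_X(\sigma_i)$, invoke the support condition to pull the sequence back along the inclusion, and use that $W$ preserves embeddings to transfer (in)equalities. The only cosmetic difference is that the paper argues directly (show any sequence in $W(X)$ is good) while you phrase it contrapositively.
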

\begin{proof}
Consider an arbitrary wpo~$X$ and an infinite sequence $\sigma_0,\sigma_1,\dots$ in~$W(X)$. Since all supports are finite, the suborder
\begin{equation*}
Z:=\bigcup\{\supp_X(\sigma_i)\,|\,i\in\mathbb N\}\subseteq X
\end{equation*}
is a countable wpo. For each $i\in\mathbb N$ we have $\supp_X(\sigma_i)\subseteq Z=\rng(\iota_Z^X)$, so that the support condition yields $\sigma_i=W(\iota_Z^X)(\tau_i)$ for some $\tau_i\in W(Z)$. Given that~$W(Z)$ is a wpo, we find indizes $i<j$ with $\tau_i\leq_{W(Z)}\tau_j$. As PO-dilators preserve embeddings, we can conclude $\sigma_i\leq_{W(X)}\sigma_j$, as needed to show that $W(X)$ is a wpo.
\end{proof}

By combining previous results, we obtain the following:

\begin{corollary}
If $W$ is a class-sized WPO-dilator, then $W\!\restriction\!\poz$ is a coded WPO-dilator. If $W$ is a coded WPO-dilator, then $\overline W$ is a class-sized WPO-dilator.
\end{corollary}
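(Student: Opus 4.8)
The plan is to derive both halves from the correspondence between coded and class-sized PO-dilators that has already been established, so that no genuinely new combinatorics is needed: the work is entirely a matter of transporting the well partial order property across the relevant functors and recalling which of the earlier statements carry over to uncountable orders.

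For the first implication, suppose $W$ is a class-sized WPO-dilator. Lemma~\ref{lem:class-to-coded} immediately gives that $W\!\restriction\!\poz$ is a coded PO-dilator, so by Definition~\ref{def:coded-wpo-dilator} it remains to check that $\overline{W\!\restriction\!\poz}(X)$ is a wpo whenever $X$ is. Here I would invoke Theorem~\ref{thm:reconstruct-class-dilator}, which supplies a natural isomorphism $\eta\colon\overline{W\!\restriction\!\poz}\Rightarrow W$; in particular each $\eta_X$ is an order isomorphism between $\overline{W\!\restriction\!\poz}(X)$ and $W(X)$. Since an order isomorphism carries infinite bad sequences to infinite bad sequences in either direction, $\overline{W\!\restriction\!\poz}(X)$ is a wpo exactly when $W(X)$ is, and the latter holds for every wpo $X$ by the assumption that $W$ is a class-sized WPO-dilator. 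This is precisely the defining condition of a coded WPO-dilator.

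For the second implication, suppose $W$ is a coded WPO-dilator. Theorem~\ref{thm:extend-po-dilator} already tells us that $\overline W$ is a class-sized PO-dilator, so by the last clause of Definition~\ref{def:po-dilator} it suffices to prove that $\overline W(X)$ is a wpo for every wpo $X$. When $X$ is countable this is literally the hypothesis that $W$ is a coded WPO-dilator (Definition~\ref{def:coded-wpo-dilator}, as evaluated in the ambient theory). To pass to an arbitrary, possibly uncountable, wpo $X$ I would apply Proposition~\ref{prop:dilator-countable-wpo} to the class-sized PO-dilator $\overline W$: having just observed that $\overline W$ maps countable wpos to wpos, that proposition upgrades the conclusion to all wpos.

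The only points that require any care — and they are bookkeeping rather than mathematics — are keeping track of which of the cited statements live inside $\rca_0$ and which genuinely require class quantifiers, and noting that an order isomorphism trivially transfers well partial orderedness. I do not expect a substantial obstacle here; the corollary is in effect an immediate consequence of Theorems~\ref{thm:extend-po-dilator} and~\ref{thm:reconstruct-class-dilator} together with Proposition~\ref{prop:dilator-countable-wpo}.
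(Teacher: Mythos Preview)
Your proposal is correct and follows essentially the same route as the paper: both directions are obtained by combining Lemma~\ref{lem:class-to-coded}, Theorem~\ref{thm:reconstruct-class-dilator}, Theorem~\ref{thm:extend-po-dilator}, and Definition~\ref{def:coded-wpo-dilator}. The only difference is that the paper reads Definition~\ref{def:coded-wpo-dilator} in the ambient setting as already quantifying over all wpos, so it does not separately invoke Proposition~\ref{prop:dilator-countable-wpo}; your detour through that proposition is harmless but unnecessary.
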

\begin{proof}
First assume that~$W$ is a class-sized WPO-dilator. It is straightforward to see that $W\!\restriction\!\poz$ is a coded PO-dilator (cf.~Lemma~\ref{lem:class-to-coded}). Given a wpo~$X$, the isomorphism from Theorem~\ref{thm:reconstruct-class-dilator} ensures that $\overline{W\!\restriction\!\poz}(X)\cong W(X)$ is a wpo as~well. According to Definition~\ref{def:coded-wpo-dilator}, this means that $W\!\restriction\!\poz$ is a coded WPO-dilator. For the second part of the corollary, we assume that $W$ is a coded WPO-dilator. In Theorem~\ref{thm:extend-po-dilator} we have shown that $\overline W$ is a class-sized PO-dilator. To conclude that $\overline W$ is a class-sized WPO-dilator, it suffices to known that $\overline W(X)$ is a wpo for any wpo~$X$. This is immediate by Definition~\ref{def:coded-wpo-dilator}.
\end{proof}

As a general statement about all class-sized PO-dilators, Theorem~\ref{thm:reconstruct-class-dilator} cannot be formalized in $\rca_0$. However, concrete instances of the theorem are available (and very useful) in our base theory:

\begin{example}\label{ex:coded-class-iso}
Recall the class-sized PO-dilators $W_Z(X)=1+Z\times X$ from Example~\ref{ex:po-dilator-aca}. In Example~\ref{ex:coded-po-dilator} we have seen that the coded PO-dilators $W_Z\!\restriction\!\poz$ are available in $\rca_0$. For each pair of partial orders $Z$ and~$X$, the isomorphism
\begin{equation*}
\eta_X:\overline{W_Z\!\restriction\!\poz}(X)\xrightarrow{\cong} W_Z(X)
\end{equation*}
from Theorem~\ref{thm:reconstruct-class-dilator} can be constructed in $\rca_0$ as well. To conclude that $W_Z\!\restriction\!\poz$ is a coded WPO-dilator, it is thus enough to show that $W_Z(X)$ is a well partial order whenever the same holds for $Z$ and~$X$. The latter can be proved in $\rca_0+\cac$ but not in~$\mathbf{WKL}_0\supseteq\rca_0$, due to a result of Cholak, Marcone and Solomon~\cite{cholak-RM-wpo}.
\end{example}

We have seen that class-sized PO-dilators are equivalent to coded PO-dilators, and that important parts of the equivalence can be established in~$\rca_0$ (at least for PO-dilators with countable trace). In the sequel, the specifications ``class-sized" and ``coded" will sometimes be left implicit.

\section{A uniform Kruskal theorem}\label{sect:generalized-Kruskal}

In this section we construct a partial order $\T W$ relative to a given PO-dilator~$W$, which needs to satisfy a certain normality condition. As explained in the introduction, one can view $\T W$ as a fixed point of~$W$ (see Theorem~\ref{thm:initial-fixed-point} below for a precise statement). Higman's order between finite sequences and various versions of the order from Kruskal's theorem are all of the form $\T W$ for a suitable WPO-dilator~$W$. Using the minimal bad sequence argument, we will show that $\T W$ is a well partial order whenever $W$ is a WPO-dilator.

To state the aforementioned normality condition, we need some terminology. Recall that $[X]^{<\omega}$ denotes the set of finite subsets of~$X$. If $\leq_X$ is a partial order on~$X$, then a quasi order~$\leqf_X$ on $[X]^{<\omega}$ can be given by
\begin{equation*}
a\leqf_X b\quad\Leftrightarrow\quad\text{for any $x\in a$ there is an $x'\in b$ with $x\leq_X x'$}.
\end{equation*}
In the case of singletons we will write $a\leqf_X x'$ and $x\leqf_X b$ rather than $a\leqf_X\{x'\}$ and $\{x\}\leqf_X b$, respectively.

\begin{definition}[$\rca_0$]\label{def:PO-dilator-normal}
A coded PO-dilator $(W,\supp)$ is called normal if we have
\begin{equation*}
\sigma\leq_{W(a)}\tau\quad\Rightarrow\quad\supp_a(\sigma)\leqf_a\supp_a(\tau)
\end{equation*}
for any $a\in\poz$ and all elements $\sigma,\tau\in W(a)$.
\end{definition}

In the previous section we have seen that any coded PO-dilator~$W$ extends into a class-sized PO-dilator~$\overline W$. Let us recall that elements of $\overline W(X)$ are of the form $(a,\sigma)$ with $a\in[X]^{<\omega}$ and $\sigma\in W(|a|)$. The support functions associated with $\overline W$ are given by $\overline\supp_X((a,\sigma))=a$. The following result shows that the normality condition extends beyond the finite orders in $\poz$. In view of Theorem~\ref{thm:reconstruct-class-dilator}, the result applies to all class-sized PO-dilators.

\begin{lemma}[$\rca_0$]\label{lem:normal-coded-class}
If $W$ is a (coded) normal PO-dilator, then we have
\begin{equation*}
(a,\sigma)\leq_{\overline W(X)}(b,\tau)\quad\Rightarrow\quad a\leqf_X b
\end{equation*}
for any partial order~$X$ and all elements $(a,\sigma),(b,\tau)\in\overline W(X)$.
\end{lemma}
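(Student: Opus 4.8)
The plan is to unfold the definition of $\leq_{\overline W(X)}$, apply the normality of $W$ at a suitable finite coded order, and then transport the resulting $\leqf$-inequality back to~$X$. Write $c:=a\cup b$, regarded as a suborder of~$X$, so that $|c|\in\poz$ and $\en_c\colon|c|\to c$ is the fixed isomorphism. By Definition~\ref{def:extend-po-dilator}, the hypothesis $(a,\sigma)\leq_{\overline W(X)}(b,\tau)$ means exactly that $W(|\iota_a^c|)(\sigma)\leq_{W(|c|)}W(|\iota_b^c|)(\tau)$, an inequality between two elements of~$W(|c|)$. Since $W$ is normal and $|c|\in\poz$, this yields $\supp_{|c|}(W(|\iota_a^c|)(\sigma))\leqf_{|c|}\supp_{|c|}(W(|\iota_b^c|)(\tau))$.

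The next step is to identify the two supports. Membership $(a,\sigma)\in\overline W(X)$ gives $(|a|,\sigma)\in\trace(W)$, i.e.\ the minimality condition $\supp_{|a|}(\sigma)=|a|$; combining this with the naturality of~$\supp$ I get $\supp_{|c|}(W(|\iota_a^c|)(\sigma))=[|\iota_a^c|]^{<\omega}(\supp_{|a|}(\sigma))=[|\iota_a^c|]^{<\omega}(|a|)=\rng(|\iota_a^c|)$, and likewise $\supp_{|c|}(W(|\iota_b^c|)(\tau))=\rng(|\iota_b^c|)$. So the normality inequality becomes $\rng(|\iota_a^c|)\leqf_{|c|}\rng(|\iota_b^c|)$.

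Finally I would transport this along~$\en_c$. From $\en_c\circ|\iota_a^c|=\iota_a^c\circ\en_a$ and the surjectivity of~$\en_a$ onto~$a$ one computes $[\en_c]^{<\omega}(\rng(|\iota_a^c|))=a$, and similarly $[\en_c]^{<\omega}(\rng(|\iota_b^c|))=b$. Since $\en_c\colon|c|\to c$ is an order isomorphism, it induces an isomorphism on finite subsets that preserves and reflects~$\leqf$; applying it to the displayed inequality gives $a\leqf_c b$. As $c=a\cup b$ carries the suborder relation inherited from~$X$, the inequality $a\leqf_c b$ is the same as $a\leqf_X b$, which is the claim.

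I do not expect a genuine obstacle here; the proof is a short computation. The only point requiring care is bookkeeping — keeping straight on which order ($|a|$, $|c|$, $c$, or~$X$) each occurrence of $\leqf$ lives, and remembering that normality is available only on the canonical coded orders in~$\poz$, which is precisely why one passes through $|c|$ and then transports back along~$\en_c$.
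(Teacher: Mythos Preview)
Your proof is correct and follows essentially the same route as the paper's: unfold the definition of $\leq_{\overline W(X)}$ at $|a\cup b|$, apply normality there, use the trace condition and naturality of supports to identify the two supports as $\rng(|\iota_a^{a\cup b}|)$ and $\rng(|\iota_b^{a\cup b}|)$, and then transport the resulting $\leqf$-inequality back to~$X$ via the embedding $\en_{a\cup b}$ (the paper composes with $\iota_{a\cup b}^X$ to land directly in~$X$, while you first land in $c=a\cup b$ and then note $\leqf_c$ agrees with $\leqf_X$, which is the same thing).
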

\begin{proof}
Assume that we have $(a,\sigma)\leq_{\overline W(X)}(b,\tau)$, which amounts to
\begin{equation*}
W(|\iota_a^{a\cup b}|)(\sigma)\leq_{W(|a\cup b|)} W(|\iota_b^{a\cup b}|)(\tau).
\end{equation*}
Since~$W$ is normal, this inequality implies
\begin{equation*}
\supp_{|a\cup b|}(W(|\iota_a^{a\cup b}|)(\sigma))\leqf_{|a\cup b|}\supp_{|a\cup b|}(W(|\iota_b^{a\cup b}|)(\tau)).
\end{equation*}
Recall that any $(a,\sigma)\in\overline W(X)$ satisfies $(|a|,\sigma)\in\trace(W)$ and hence $\supp_{|a|}(\sigma)=|a|$. We thus have
\begin{equation*}
\supp_{|a\cup b|}(W(|\iota_a^{a\cup b}|)(\sigma))=[|\iota_a^{a\cup b}|]^{<\omega}\circ\supp_{|a|}(\sigma)=[|\iota_a^{a\cup b}|]^{<\omega}(|a|).
\end{equation*}
Hence the above amounts to
\begin{equation*}
[|\iota_a^{a\cup b}|]^{<\omega}(|a|)\leqf_{|a\cup b|} [|\iota_b^{a\cup b}|]^{<\omega}(|b|).
\end{equation*}
Invoking the isomorphisms $\en_a:|a|\to a$ and $\en_{a\cup b}:|a\cup b|\to a\cup b$, we have
\begin{equation*}
\iota_a^X\circ\en_a=\iota_{a\cup b}^X\circ\iota_a^{a\cup b}\circ\en_a=\iota_{a\cup b}^X\circ\en_{a\cup b}\circ|\iota_a^{a\cup b}|.
\end{equation*}
Since $\iota_{a\cup b}^X\circ\en_{a\cup b}$ is an embedding, we can conclude
\begin{multline*}
a=[\iota_a^X\circ\en_a]^{<\omega}(|a|)=[\iota_{a\cup b}^X\circ\en_{a\cup b}]^{<\omega}\circ[|\iota_a^{a\cup b}|]^{<\omega}(|a|)\leqf_X\\
\leqf_X [\iota_{a\cup b}^X\circ\en_{a\cup b}]^{<\omega}\circ[|\iota_b^{a\cup b}|]^{<\omega}(|b|)=[\iota_b^X\circ\en_b]^{<\omega}(|b|)=b,
\end{multline*}
as desired.
\end{proof}

Labelled structures are often ordered by embeddings that map each node to a node with bigger label. This condition on the labels ensures that the resulting PO-dilator is normal.

\begin{example}\label{ex:W_Z-normal}
The PO-dilators $W_Z(X)=1+Z\times X$ from Example~\ref{ex:po-dilator-aca} are normal. To see that this is the case, we consider an inequality
\begin{equation*}
(z,x)\leq_{W_Z(X)} (z',x').
\end{equation*}
We then have $z\leq_Z z'$ and $x\leq_X x'$. The latter yields
\begin{equation*}
\supp_X((z,x))=\{x\}\leqf_X \{x'\}=\supp_X((z',x')),
\end{equation*}
as required.
\end{example}

The term ``normal" is motivated by the linear case, where normal WO-dilators induce normal functions on the ordinals (due to Aczel~\cite{aczel-normal-functors}). As mentioned in the introduction, Freund~\cite{freund-equivalence,freund-categorical,freund-computable} has shown that $\Pi^1_1$-comprehension is equivalent to the principle that $\vartheta(D)$ is well-founded for any WO-dilator~$D$. Interestingly, the latter becomes much weaker when we require normality: the principle that normal WO-dilators have well founded fixed points does only lead up to (bar) induction for $\Pi^1_1$-formulas, as shown by Freund and Rathjen~\cite{freund-rathjen_derivatives,freund-single-fixed-points,freund-ordinal-exponentiation}. In contrast, the present paper shows that $\Pi^1_1$-comprehension is equivalent to the principle that $\T W$ is a wpo for any normal WPO-dilator~$W$. This reveals that the normality condition behaves rather differently in the partial case.

Weiermann~\cite{weiermann09} has previously described the construction of a partial order~$\T W$ relative to a transformation~$W$ of partial orders. So far, the general construction has been a successful heuristic principle: it has led to the analysis of several well partial orders from algebra and combinatorics by van der Meeren, Pelupessy, Rathjen and Weiermann~\cite{pelupessy-weiermann,MRW-Veblen,MRW-Bachmann}. Based on the notion of normal PO-dilator, we can now make the general construction official. At two points in the following definition, we require that $\leq_{\T W}$ is a partial order on certain subsets of $\T W$. Eventually, this requirement will turn out to be redundant, as the entire set $\T W$ is partially ordered by $\leq_{\T W}$. A detailed justification of the following recursion can be found below.

\begin{definition}[$\rca_0$]\label{def:T(W)}
Consider a normal PO-dilator~$W$. We define a set $\T W$ of terms and a binary relation $\leq_{\T W}$ on this set by simultaneous recursion:
\begin{itemize}
\item Given a finite set $a\subseteq\T W$ that is partially ordered by $\leq_{\T W}$, we add a term $\circ(a,\sigma)\in\T W$ for each $\sigma\in W(|a|)$ with $(|a|,\sigma)\in\trace(W)$.
\item We have $\circ(a,\sigma)\leq_{\T W}\circ(b,\tau)$ if, and only if, one of the following holds:
\begin{enumerate}[label=(\roman*)]
\item We have $\circ(a,\sigma)\leq_{\T W}t$ for some $t\in b$.
\item The set $a\cup b$ is partially ordered by $\leq_{\T W}$ and we have
\begin{equation*}
W(|\iota_a^{a\cup b}|)(\sigma)\leq_{W(|a\cup b|)}W(|\iota_b^{a\cup b}|)(\tau).
\end{equation*}
\end{enumerate}
\end{itemize}
\end{definition}

Let us point out that $\T W$ is non-empty if the same holds for $W(\emptyset)$. In several natural examples this is the case: if $W(X)$ consists of the finite sequences with entries in~$X$, then $W(\emptyset)$ contains the empty sequence~$\langle\rangle$. In cases where $W(\emptyset)$ is empty, one can work with $X\mapsto 1+W(X)$ rather than~$W$ (cf.~Example~\ref{ex:po-dilator-aca}).

To justify the previous definition in more detail, one can proceed as follows: First generate a set $\T_0W\supseteq\T W$ by including all terms $\circ(a,\sigma)$ for finite $a\subseteq\T_0W$, where $a$ is not assumed to be ordered and $\sigma$ may be the second component of any pair in~$\trace(W)$. Let us write $\ulcorner s\urcorner$ for the G\"odel number of a term~$s\in\T_0W$ (note that $\ulcorner s\urcorner$ and $s$ coincide if the construction is already arithmetized). Now we define a length function $l:\T_0W\to\mathbb N$ by the recursive clause
\begin{equation*}
l(\circ(a,\sigma))=\max\left\{\ulcorner\circ(a,\sigma)\urcorner,1+\textstyle\sum_{s\in a}2\cdot l(s)\right\}.
\end{equation*}
The G\"odel numbers have been included to ensure that quantifier occurrences of the form $\forall_{s\in\T_0W}(l(s)\leq n\rightarrow\dots)$ are bounded, which justifies certain induction arguments in~$\rca_0$. For $r,s,t\in\T_0W$ one can now decide $r\in\T W$ and $s\leq_{\T W}t$ by simultaneous recursion on $l(r)$ and $l(s)+l(t)$. Indeed, to decide $r=\circ(a,\sigma)\in\T W$ we first decide $s\leq_{\T W}t$ for all $s,t\in a$, which is possible in view of $l(s)+l(t)<l(r)$. If the resulting relation $\leq_{\T W}$ is a partial order on $a\subseteq\T W$, then we determine the unique~$|a|\in\poz$ with $(a,\leq_{\T W})\cong|a|$. Finally, we check $(|a|,\sigma)\in\trace(W)$.

To show that $\leq_{\T W}$ is a partial order, we also need a height $h:\T W\to\mathbb N$ with
\begin{equation*}
h(\circ( a,\sigma))=\max(\{0\}\cup\{h(s)+1\,|\,s\in a\}).
\end{equation*}
Normality has the following important consequence:

\begin{lemma}[$\rca_0$]\label{lem:order-heights}
Assume that $W$ is a normal PO-dilator. We have
\begin{equation*}
s\leq_{\T W}t\quad\Rightarrow\quad h(s)\leq h(t)
\end{equation*}
for all $s,t\in\T W$.
\end{lemma}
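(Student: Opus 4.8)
The plan is to prove the implication by induction on the length $l(t)$ of the right-hand term, using the two clauses of Definition~\ref{def:T(W)} to analyse the hypothesis $s\leq_{\T W}t$ and invoking normality of~$W$ at exactly the point where it was used in the proof of Lemma~\ref{lem:normal-coded-class}. Since every element of~$\T W$ has the form $\circ(a,\sigma)$, I would write $s=\circ(a,\sigma)$ and $t=\circ(b,\tau)$ and distinguish which of clauses~(i) and~(ii) witnesses $s\leq_{\T W}t$.

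Clause~(i) is the easy case: here $s\leq_{\T W}t'$ for some $t'\in b$, so $l(t')<l(t)$ and the induction hypothesis gives $h(s)\leq h(t')$; since $t'\in b$ we also have $h(t')<h(t)$ by the defining clause for~$h$, whence $h(s)<h(t)$. Clause~(ii) is the substantive case, and it is where I would re-run the computation from the proof of Lemma~\ref{lem:normal-coded-class}. From $W(|\iota_a^{a\cup b}|)(\sigma)\leq_{W(|a\cup b|)}W(|\iota_b^{a\cup b}|)(\tau)$, normality yields the corresponding $\leqf_{|a\cup b|}$-inequality between the two supports; rewriting those supports by naturality together with the minimality conditions $\supp_{|a|}(\sigma)=|a|$ and $\supp_{|b|}(\tau)=|b|$ (which hold since $(|a|,\sigma),(|b|,\tau)\in\trace(W)$), and then transporting the inequality along the order isomorphism $\en_{a\cup b}:|a\cup b|\xrightarrow{\cong}a\cup b$ via $\en_{a\cup b}\circ|\iota_a^{a\cup b}|=\iota_a^{a\cup b}\circ\en_a$ and its analogue for~$b$, I obtain $a\leqf_{a\cup b}b$; that is, for every $s'\in a$ there is a $t'\in b$ with $s'\leq_{\T W}t'$. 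For each such pair we have $l(t')<l(t)$, so the induction hypothesis gives $h(s')\leq h(t')$, while $t'\in b$ gives $h(t')+1\leq h(t)$; hence $h(s')+1\leq h(t)$ for every $s'\in a$, and since trivially $0\leq h(t)$ this forces $h(s)=\max(\{0\}\cup\{h(s')+1\mid s'\in a\})\leq h(t)$.

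The only point that needs care --- and the one I would spell out rather than leave to the reader --- is that this is a legitimate induction in~$\rca_0$: the formula ``for all $s,t\in\T W$ with $l(t)=n$, $s\leq_{\T W}t$ implies $h(s)\leq h(t)$'' has a $\Delta^0_1$ matrix, since both $\leq_{\T W}$ and the function~$h$ are computed by recursion on~$l$, so $\Pi^0_1$-induction applies; and in both clauses the induction hypothesis is invoked only at second arguments $t'\in b$, for which the length function guarantees $l(t')<l(t)$. I do not expect any genuine obstacle beyond this bookkeeping, since the order-theoretic content is precisely the normality computation already carried out in Lemma~\ref{lem:normal-coded-class}.
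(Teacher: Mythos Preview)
Your proof is correct and follows the same two-case analysis as the paper's; the only difference is the induction parameter. You induct on $l(t)$ alone, which leaves the quantifier over $s$ unbounded and so requires $\Pi^0_1$-induction (as you correctly note, this is available in $\rca_0$), whereas the paper inducts on $l(s)+l(t)$, so that both $s$ and $t$ are bounded via the G\"odel-number clause built into $l$ and only bounded induction is needed---but either measure works, since in both clauses the induction hypothesis is invoked at a second argument $t'\in b$ with $l(t')<l(t)$.
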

\begin{proof}
We establish the claim by induction on $l(s)+l(t)$. Assume that we have
\begin{equation*}
s=\circ( a,\sigma)\leq_{\T W}\circ( b,\tau)=t.
\end{equation*}
Let us first consider the case where we have $s\leq_{\T W}t'$ for some $t'\in b$. In view of $l(t')<l(t)$ we inductively get $h(s)\leq h(t')<h(t)$. Now assume that $a\cup b$ is partially ordered by $\leq_{\T W}$ and that we have
\begin{equation*}
W(|\iota_a^{a\cup b}|)(\sigma)\leq_{W(|a\cup b|)}W(|\iota_b^{a\cup b}|)(\tau).
\end{equation*}
As in the proof of Lemma~\ref{lem:normal-coded-class} (with $a\cup b$ at the place of~$X$), we can use normality to get $a\leqf_{\T W}b$. Given an arbitrary $s'\in a$, we thus find a $t'\in b$ with $s'\leq_{\T W}t'$. In view of $l(s')+l(t')<l(s)+l(t)$, the induction hypothesis yields $h(s')\leq h(t')<h(t)$. Since this holds for all $s'\in a$, we obtain
\begin{equation*}
h(s)=\max(\{0\}\cup\{h(s')+1\,|\,s'\in a\})\leq h(t),
\end{equation*}
as required.
\end{proof}

In the following proof, normality is used to show antisymmetry and transitivity.

\begin{proposition}[$\rca_0$]\label{prop:TW-order}
The relation $\leq_{\T W}$ is a partial order on $\T W$, for any normal PO-dilator~$W$.
\end{proposition}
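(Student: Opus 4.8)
The plan is to lean on two facts already available: Lemma~\ref{lem:order-heights}, which says that $\leq_{\T W}$ does not increase the height function~$h$, and the partial-order verification carried out inside the proof of Theorem~\ref{thm:extend-po-dilator}, which can be re-used here almost verbatim. The only genuine difficulty is the apparent circularity in Definition~\ref{def:T(W)}: both clauses of the definition of $\leq_{\T W}$ presuppose that various subsets of~$\T W$ are partially ordered by~$\leq_{\T W}$. I would break this circularity by an induction on the length function~$l$. Precisely, I would prove by induction on~$n$ that $\leq_{\T W}$ restricts to a partial order on $\T W_{\leq n}:=\{s\in\T W\mid l(s)\leq n\}$. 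Since every finite subset of~$\T W$ lies in some~$\T W_{\leq n}$ and each partial-order axiom mentions only finitely many elements, this gives the proposition; the base case is vacuous because $l(\circ(a,\sigma))\geq 1$ forces $\T W_{\leq 0}=\emptyset$. The key structural observation for the step is that if $\circ(a,\sigma)\in\T W_{\leq n+1}$ then every $s'\in a$ has $l(s')<l(\circ(a,\sigma))\leq n+1$; hence any union of the immediate-subterm sets of finitely many terms from~$\T W_{\leq n+1}$ is contained in~$\T W_{\leq n}$ and is partially ordered by the induction hypothesis.

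Reflexivity of $\circ(a,\sigma)$ follows from clause~(ii) with $b:=a$: here $\iota_a^a$ is the identity, so functoriality of~$W$ and~$|\cdot|$ reduces the condition to $\sigma\leq_{W(|a|)}\sigma$, and we use that $a$ is partially ordered by~$\leq_{\T W}$ since $\circ(a,\sigma)\in\T W$. For antisymmetry, suppose $\circ(a,\sigma)\leq_{\T W}\circ(b,\tau)$ and $\circ(b,\tau)\leq_{\T W}\circ(a,\sigma)$ with both terms in~$\T W_{\leq n+1}$. Lemma~\ref{lem:order-heights} makes the two heights equal, so clause~(i) can apply in neither direction, as it would produce a subterm~$t'$ with $h(\circ(a,\sigma))\leq h(t')<h(\circ(b,\tau))=h(\circ(a,\sigma))$. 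Hence both inequalities hold through clause~(ii); since $a\cup b\subseteq\T W_{\leq n}$ is partially ordered, antisymmetry in~$W(|a\cup b|)$ gives $W(|\iota_a^{a\cup b}|)(\sigma)=W(|\iota_b^{a\cup b}|)(\tau)$, and I would then copy the support argument from the antisymmetry part of Theorem~\ref{thm:extend-po-dilator}: applying $[\en_{a\cup b}]^{<\omega}\circ\supp_{|a\cup b|}$ to both sides and using the trace condition $\supp_{|a|}(\sigma)=|a|$ recovers $a=b$, and injectivity of~$W$ on quasi embeddings then yields $\sigma=\tau$.

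Transitivity is the main obstacle, and I would handle the step for~$\T W_{\leq n+1}$ by a subsidiary induction on $l(r)+l(s)+l(t)$. Let $r=\circ(a,\sigma)$, $s=\circ(b,\tau)$, $t=\circ(c,\rho)$ with $r\leq_{\T W}s\leq_{\T W}t$. If $s\leq_{\T W}t$ holds via clause~(i), say $s\leq_{\T W}t'$ with $t'\in c$, then $r\leq_{\T W}s\leq_{\T W}t'$ has a smaller $l$-sum (and all three terms still lie in~$\T W_{\leq n+1}$), so the subsidiary hypothesis gives $r\leq_{\T W}t'$ and clause~(i) gives $r\leq_{\T W}t$. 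If instead $r\leq_{\T W}s$ holds via clause~(i), say $r\leq_{\T W}s'$ with $s'\in b$, while $s\leq_{\T W}t$ holds via clause~(ii), then $W(|\iota_b^{b\cup c}|)(\tau)\leq_{W(|b\cup c|)}W(|\iota_c^{b\cup c}|)(\rho)$, and running the normality argument from the proof of Lemma~\ref{lem:order-heights} (with $b\cup c$ in place of~$X$) gives $b\leqf_{\T W}c$; choosing $t'\in c$ with $s'\leq_{\T W}t'$ we obtain $r\leq_{\T W}s'\leq_{\T W}t'$ with a smaller $l$-sum, the subsidiary hypothesis gives $r\leq_{\T W}t'$, and clause~(i) gives $r\leq_{\T W}t$. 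In the remaining case both inequalities hold through clause~(ii), so $a\cup b\cup c\subseteq\T W_{\leq n}$ is partially ordered; then the transitivity computation from Theorem~\ref{thm:extend-po-dilator} --- push $\sigma,\tau,\rho$ into~$W(|a\cup b\cup c|)$ using that $W$ preserves embeddings, apply transitivity there, and descend along the quasi embedding $W(|\iota_{a\cup c}^{a\cup b\cup c}|)$ --- yields $W(|\iota_a^{a\cup c}|)(\sigma)\leq_{W(|a\cup c|)}W(|\iota_c^{a\cup c}|)(\rho)$, that is, $r\leq_{\T W}t$ via clause~(ii).

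I expect the bookkeeping in the last transitivity case, together with the routine checks that every auxiliary subset really lands in~$\T W_{\leq n}$, to be the most delicate part. The use of normality in the mixed clause~(i)/clause~(ii) case is precisely the step that would break down without the normality hypothesis, which is why normality enters the proposition through Lemma~\ref{lem:order-heights}.
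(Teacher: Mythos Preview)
Your proposal is correct and follows essentially the same argument as the paper. The paper packages the induction as a single simultaneous induction on~$n$ with the three clauses indexed by $l(r)\leq n$, $l(s)+l(t)\leq n$, and $l(r)+l(s)+l(t)\leq n$, rather than your outer induction on~$\T W_{\leq n}$ with a subsidiary $l$-sum induction for transitivity, but the case analysis---Lemma~\ref{lem:order-heights} to rule out clause~(i) in antisymmetry, normality to obtain $b\leqf_{\T W}c$ in the mixed transitivity case, and the $W(|a\cup b\cup c|)$ computation borrowed from Theorem~\ref{thm:extend-po-dilator}---is identical.
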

\begin{proof}
By induction on $n$ we simultaneously show
\begin{align*}
r\leq_{\T W}r& && \text{for $l(r)\leq n$},\\
s\leq_{\T W}t\land t\leq_{\T W}s&\,\Rightarrow\, s=t && \text{for $l(s)+l(t)\leq n$},\\
r\leq_{\T W}s\land s\leq_{\T W}t&\,\Rightarrow\, r\leq_{\T W} t && \text{for $l(r)+l(s)+l(t)\leq n$}.
\end{align*}
Concerning reflexivity for $r=\circ( a,\sigma)$, the induction hypothesis ensures that $\leq_{\T W}$ is a partial order on $a$ (due to the factor $2$ in the definition of our length function). Reflexivity in the partial order $W(|a|)$ yields $W(|\iota_a^a|)(\sigma)\leq_{W(|a|)} W(|\iota_a^a|)(\sigma)$. We can thus conclude $r\leq_{\T W} r$ by clause~(ii) of Definition~\ref{def:T(W)}. Let us now establish antisymmetry for $s=\circ( a,\sigma)$ and $t=\circ( b,\tau)$. First assume that $s\leq_{\T W}t$ holds because we have $s\leq_{\T W}t'$ for some $t'\in b$. Using the previous lemma, we then obtain $h(s)\leq h(t')<h(t)$. This means that $t\leq_{\T W}s$ cannot hold, again by the previous lemma. A symmetric argument applies if we have $t\leq_{\T W}s'$ for some~$s'\in a$. It remains to consider the case where we have
\begin{equation*}
W(|\iota_a^{a\cup b}|)(\sigma)\leq_{W(|a\cup b|)}W(|\iota_b^{a\cup b}|)(\tau)\quad\text{and}\quad W(|\iota_b^{a\cup b}|)(\tau)\leq_{W(|a\cup b|)}W(|\iota_a^{a\cup b}|)(\sigma).
\end{equation*}
By antisymmetry in the partial order $W(|a\cup b|)$ we get $W(|\iota_a^{a\cup b}|)(\sigma)=W(|\iota_b^{a\cup b}|)(\tau)$. As in the proof of Theorem~\ref{thm:extend-po-dilator} we can deduce $a=b$ and $\sigma=\tau$, which yields the desired equality~$s=t$. Finally, we establish transitivity for $r=\circ( a,\sigma)$, $s=\circ( b,\tau)$ and $t=\circ( c,\rho)$. First assume that $s\leq_{\T W}t$ holds because we have $s\leq_{\T W}t'$ for some $t'\in c$. By induction hypothesis we get $r\leq_{\T W}t'$ and then $r\leq_{\T W}t$. For the rest of the argument we may assume
\begin{equation*}
W(|\iota_b^{b\cup c}|)(\tau)\leq_{W(|b\cup c|)}W(|\iota_c^{b\cup c}|)(\rho).
\end{equation*}
As in the proof of Lemma~\ref{lem:normal-coded-class} we can use normality to get $b\leqf_{\T W} c$. Now assume that $r\leq_{\T W}s$ holds because we have $r\leq_{\T W}s'$ for some $s'\in b$. Due to $b\leqf_{\T W} c$, we get a $t'\in c$ with $s'\leq_{\T W}t'$. By the induction hypothesis we obtain $r\leq_{\T W} t'$ and then $r\leq_{\T W}t$. It remains to consider the case where we have
\begin{equation*}
W(|\iota_a^{a\cup b}|)(\sigma)\leq_{W(|a\cup b|)}W(|\iota_b^{a\cup b}|)(\tau).
\end{equation*}
The induction hypothesis ensures that $\leq_{\T W}$ is a partial order on $a\cup b\cup c$. As in the proof of Theorem~\ref{thm:extend-po-dilator} we can then deduce $W(|\iota_a^{a\cup c}|)(\sigma)\leq_{W(|a\cup c|)}W(|\iota_c^{a\cup c}|)(\rho)$. This yields $r=\circ(a,\sigma)\leq_{\T W}\circ(c,\rho)=t$, as needed for transitivity.
\end{proof}

In the previous section we have chosen a unique representative~$|a|\in\poz$ from the isomorphism class of each finite partial order~$a$. The terms in $\T W$ depend on this choice, due to the condition $\sigma\in W(|a|)$ in Definition~\ref{def:T(W)}. Even for very simple examples of a normal PO-dilator~$W$, this makes it hard to give an understandable description of~$\T W$. In order to solve this problem, we now present a categorical characterization, which determines the order~$\T W$ up to isomorphism. As before, we write $\overline\supp_X:\overline W(X)\to[X]^{<\omega}$ for the support functions that come with the class-sized extension~$\overline W$ of a coded PO-dilator~$W$.

\begin{definition}[$\rca_0$]\label{def:Kruskal-fixed-point}
Let~$W$ be a normal PO-dilator. A Kruskal fixed point of~$W$ consists of a partial order~$X$ and a bijection $\kappa:\overline W(X)\to X$ with
\begin{equation*}
\kappa(\sigma)\leq_X \kappa(\tau)\quad\Leftrightarrow\quad \text{we have }\sigma\leq_{\overline W(X)}\tau\text{ or }\kappa(\sigma)\leqf_X\overline\supp_X(\tau)
\end{equation*}
for all $\sigma,\tau\in\overline W(X)$. We say that $(X,\kappa)$ is initial if any other Kruskal fixed point $(X',\kappa')$ of~$W$ admits a unique quasi embedding $f:X\to X'$ with $f\circ\kappa=\kappa'\circ\overline W(f)$.
\end{definition}

Just as other initial objects, initial Kruskal fixed points of a normal PO-dilator are unique up to isomorphism. Hence the following determines~$\T W$.

\begin{theorem}[$\rca_0$]\label{thm:initial-fixed-point}
Consider a normal PO-dilator~$W$. There is a function $\kappa:\overline W(\T W)\to\T W$ such that $(\T W,\kappa)$ is an initial Kruskal fixed point of~$W$.
\end{theorem}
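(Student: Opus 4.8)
The plan is to build the initial Kruskal fixed point directly from the term system of Definition~\ref{def:T(W)}, using the categorical characterization from Theorem~\ref{thm:reconstruct-class-dilator} to mediate between the ``coded" description (terms $\circ(a,\sigma)$ with $\sigma\in W(|a|)$) and the ``class-sized" description (elements $(a,\sigma)\in\overline W(\T W)$). First I would define $\kappa:\overline W(\T W)\to\T W$ by $\kappa((a,\sigma))=\circ(a,\sigma)$; this makes sense because the membership conditions on both sides agree — a pair $(a,\sigma)$ lies in $\overline W(\T W)$ exactly when $a\subseteq\T W$ is finite and $(|a|,\sigma)\in\trace(W)$, which is precisely when $\circ(a,\sigma)$ is a term of $\T W$ (using Proposition~\ref{prop:TW-order} to know that $\leq_{\T W}$ partially orders every finite $a\subseteq\T W$, so the side conditions in Definition~\ref{def:T(W)} are always met). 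Surjectivity and injectivity of $\kappa$ are then immediate from the way terms are generated. The bulk of the first half of the proof is to verify the fixed-point equivalence
\begin{equation*}
\kappa(\sigma)\leq_{\T W}\kappa(\tau)\quad\Leftrightarrow\quad\sigma\leq_{\overline W(\T W)}\tau\text{ or }\kappa(\sigma)\leqf_{\T W}\overline\supp_{\T W}(\tau).
\end{equation*}
Unwinding $\kappa$, the left side is $\circ(a,\sigma)\leq_{\T W}\circ(b,\tau)$, which by Definition~\ref{def:T(W)} holds iff clause~(i) or clause~(ii) holds. Clause~(ii) unwinds to $W(|\iota_a^{a\cup b}|)(\sigma)\leq_{W(|a\cup b|)}W(|\iota_b^{a\cup b}|)(\tau)$, which by the definition of $\leq_{\overline W(\T W)}$ in Definition~\ref{def:extend-po-dilator} is exactly $(a,\sigma)\leq_{\overline W(\T W)}(b,\tau)$. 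Clause~(i) says $\circ(a,\sigma)\leq_{\T W}t$ for some $t\in b$; since $\overline\supp_{\T W}((b,\tau))=b$, a short argument — using Lemma~\ref{lem:normal-coded-class} to handle iterated applications of clause~(i), or more simply observing that clause~(i) already gives $\kappa(\sigma)\leq_{\T W}t$ for a single $t\in b$ and that $\leqf_{\T W}$ only asks for one witness — shows clause~(i) is equivalent to $\kappa(\sigma)\leqf_{\T W}\overline\supp_{\T W}(\tau)$. This establishes that $(\T W,\kappa)$ is a Kruskal fixed point.

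For initiality, let $(X',\kappa')$ be any other Kruskal fixed point. I would define $f:\T W\to X'$ by recursion on the height $h$ from Lemma~\ref{lem:order-heights}: on a term $\circ(a,\sigma)$, having already defined $f$ on the strictly lower terms comprising $a$, set
\begin{equation*}
f(\circ(a,\sigma))=\kappa'\bigl(\overline W(f)((a,\sigma))\bigr)=\kappa'\bigl(([f]^{<\omega}(a),W(|f\!\restriction\!a|)(\sigma))\bigr),
\end{equation*}
which is forced by the required equation $f\circ\kappa=\kappa'\circ\overline W(f)$ and hence also delivers uniqueness. One must check $f$ is well defined — that $([f]^{<\omega}(a),W(|f\!\restriction\!a|)(\sigma))$ really lies in $\overline W(X')$, i.e.\ that $(|[f]^{<\omega}(a)|,W(|f\!\restriction\!a|)(\sigma))\in\trace(W)$; this is exactly the content of Lemma~\ref{lem:trace-preserved} applied to the surjective quasi embedding $f\!\restriction\!a$ (after identifying $|{\restriction}|$-images appropriately). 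Then one proves by induction on $h(s)+h(t)$ that $f$ is a quasi embedding, i.e.\ $f(s)\leq_{X'}f(t)\Rightarrow s\leq_{\T W}t$: expanding the fixed-point equivalences on both sides and peeling off the clause~(i)/$\leqf$ alternative using the induction hypothesis and Lemma~\ref{lem:normal-coded-class}, the remaining case reduces the inequality $\overline W(f)((a,\sigma))\leq_{\overline W(X')}\overline W(f)((b,\tau))$ to $(a,\sigma)\leq_{\overline W(\T W)}(b,\tau)$, which works because $\overline W(f)$ is a quasi embedding by Theorem~\ref{thm:extend-po-dilator}. Finally one checks the naturality square $f\circ\kappa=\kappa'\circ\overline W(f)$ holds by construction.

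The main obstacle I anticipate is bookkeeping rather than conceptual: keeping the passage between $a$ (a finite suborder of $\T W$) and $|a|\in\poz$ consistent when pushing terms through $f$, so that the trace condition and the defining order of $\overline W$ match up with the raw term-level clauses of Definition~\ref{def:T(W)}. Concretely, the delicate point is verifying well-definedness of $f$ and that the recursion on height respects the simultaneous recursion defining $\T W$ — one needs Lemma~\ref{lem:order-heights} to know that every term in $a$ has strictly smaller height than $\circ(a,\sigma)$, and Lemma~\ref{lem:trace-preserved} to know the trace condition survives the application of $f\!\restriction\!a$. Once these are in place, the quasi-embedding verification and uniqueness are routine inductions along the lines already used in the proofs of Theorem~\ref{thm:extend-po-dilator} and Proposition~\ref{prop:TW-order}.
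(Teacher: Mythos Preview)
Your proposal is correct and follows essentially the same approach as the paper: define $\kappa((a,\sigma))=\circ(a,\sigma)$, check the fixed-point equivalence by matching the two clauses of Definition~\ref{def:T(W)} against the two disjuncts of Definition~\ref{def:Kruskal-fixed-point} via Definition~\ref{def:extend-po-dilator}, and construct the unique~$f$ by the recursion forced by $f\circ\kappa=\kappa'\circ\overline W(f)$, using Lemma~\ref{lem:trace-preserved} for well-definedness and the induction hypothesis on $f\!\restriction\!(a\cup b)$ (not the full $\overline W(f)$, which would be circular) for the quasi-embedding step. The one point to adjust is your choice of recursion parameter: the paper uses the length function~$l$ rather than the height~$h$, because $l(s)\geq\ulcorner s\urcorner$ guarantees there are only finitely many terms with $l(s)\leq n$, keeping the relevant quantifiers bounded and the simultaneous induction within~$\rca_0$; with~$h$ there may be infinitely many terms of a given height, so a little extra care is needed (and note that $s\in a\Rightarrow h(s)<h(\circ(a,\sigma))$ is immediate from the definition of~$h$ and does not require Lemma~\ref{lem:order-heights}).
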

\begin{proof}
Comparing Definition~\ref{def:extend-po-dilator} and Definition~\ref{def:T(W)}, we see that the conditions for $(a,\sigma)\in\overline W(\T W)$ and for $\circ(a,\sigma)\in\T W$ are almost the same. In Definition~\ref{def:T(W)} we have included the additional condition that $\leq_{\T W}$ must be a partial order on~$a$. Due to Proposition~\ref{prop:TW-order}, we now know that this condition is automatic. We can thus define a bijection $\kappa:\overline W(\T W)\to\T W$ by setting
\begin{equation*}
\kappa((a,\sigma))=\circ(a,\sigma).
\end{equation*}
To show that $(\T W,\kappa)$ is a Kruskal fixed point, we consider arbitrary elements $(a,\sigma)$ and $(b,\tau)$ of~$\overline W(\T W)$. By comparing Definition~\ref{def:T(W)} and Definition~\ref{def:Kruskal-fixed-point}, we see that it is enough to convince ourselves of
\begin{align*}
\circ(a,\sigma)\leqf_{\T W}b\quad &\Leftrightarrow\quad \kappa((a,\sigma))\leqf_{\T W}\overline\supp_X((b,\tau)),\\
W(|\iota_a^{a\cup b}|)(\sigma)\leq_{W(|a\cup b|)}W(|\iota_b^{a\cup b}|)(\tau)\quad &\Leftrightarrow\quad (a,\sigma)\leq_{\overline W(\T W)}(b,\tau).
\end{align*}
The first equivalence is true in view of $\kappa((a,\sigma))=\circ(a,\sigma)$ and $\overline\supp_X((b,\tau))=b$. The second equivalence holds by Definition~\ref{def:extend-po-dilator}. Now consider another Kruskal fixed point $(X,\kappa')$. In view of Definition~\ref{def:extend-po-dilator}, the condition $f\circ\kappa=\kappa'\circ\overline W(f)$ from Definition~\ref{def:Kruskal-fixed-point} is equivalent to
\begin{equation*}
f(\circ(a,\sigma))=\kappa'(([f]^{<\omega}(a),W(|f\!\restriction\! a|)(\sigma))).
\end{equation*}
Since $s\in a$ implies $l(s)<l(\circ(a,\sigma))$, this can be read as a recursive definition, which admits at most one solution~$f$. To complete the proof we must show that the given recursion does indeed yield a quasi embedding $f:\T W\to X$. We establish
\begin{align*}
r\in\T W\quad&\Rightarrow\quad f(r)\in X,\\
f(s)\leq_X f(t)\quad&\Rightarrow\quad s\leq_{\T W}t
\end{align*}
by simultaneous induction on $l(r)$ and $l(s)+l(t)$, respectively. Let us show the first implication for $r=\circ(a,\sigma)$. The simultaneous induction hypothesis ensures that we have $[f]^{<\omega}(a)\in[X]^{<\omega}$, and that $f\!\restriction\! a:a\to [f]^{<\omega}(a)$ is a quasi embedding. In view of $r\in\T W$ we have $(|a|,\sigma)\in\trace(W)$, so that Lemma~\ref{lem:trace-preserved} yields
\begin{equation*}
(|[f]^{<\omega}(a)|,W(|f\!\restriction\! a|)(\sigma))\in\trace(W).
\end{equation*}
Due to Definition~\ref{def:extend-po-dilator} we get $([f]^{<\omega}(a),W(|f\!\restriction\! a|)(\sigma))\in\overline W(X)$, and hence $f(r)\in X$. To establish the second implication for $s=\circ(a,\sigma)$ and $t=\circ(b,\tau)$, we assume
\begin{equation*}
f(s)=\kappa'(([f]^{<\omega}(a),W(|f\!\restriction\! a|)(\sigma)))\leq_X\kappa'(([f]^{<\omega}(b),W(|f\!\restriction\! b|)(\tau)))=f(t).
\end{equation*}
Since $(X,\kappa')$ is a Kruskal fixed point, one of the following two cases must apply: First assume that we have
\begin{equation*}
([f]^{<\omega}(a),W(|f\!\restriction\! a|)(\sigma))\leq_{\overline W(X)} ([f]^{<\omega}(b),W(|f\!\restriction\! b|)(\tau)).
\end{equation*}
The induction hypothesis ensures that $f\!\restriction\!(a\cup b):a\cup b\to[f]^{<\omega}(a\cup b)\subseteq X$ is a quasi embedding. Arguing as in the proof of Theorem~\ref{thm:extend-po-dilator}, we can then deduce~$(a,\sigma)\leq_{\overline W(\T W)}(b,\tau)$. Since $(\T W,\kappa)$ is a Kruskal fixed point, we obtain
\begin{equation*}
s=\kappa((a,\sigma))\leq_{\T W}\kappa((b,\tau))=t.
\end{equation*}
Now assume that $f(s)\leq_X f(t)$ holds because of
\begin{equation*}
f(s)\leqf_X\overline\supp_X(([f]^{<\omega}(b),W(|f\!\restriction\! b|)(\tau)))=[f]^{<\omega}(b).
\end{equation*}
We then have $f(s)\leq_X f(t')$ for some $t'\in b$. In view of $l(t')<l(t)$ the induction hypothesis yields $s\leq_{\T W}t'$. By clause~(i) of Definition~\ref{def:T(W)} we get $s\leq_{\T W}t$.
\end{proof}

Using the theorem, we can finally describe $\T W$ in a concrete case:

\begin{example}\label{ex:higman}
In Examples~\ref{ex:po-dilator-aca} and~\ref{ex:W_Z-normal} we have considered normal PO-dilators~$W_Z$ with $W_Z(X)=1+Z\times X$. We want to show that $\T W_Z$ is isomorphic to the set $\seq(Z)$ of finite sequences with entries in the partial order~$Z$, ordered as in Higman's lemma~\cite{higman52}. In this order we have $\langle z_0\dots,z_{m-1}\rangle\leq_{\seq(Z)}\langle z_0'\dots,z_{n-1}'\rangle$ if, and only if, there is a strictly increasing function $f:m=\{0,\dots,m-1\}\to\{0,\dots,n-1\}=n$ such that $z_i\leq_Z z'_{f(i)}$ holds for all $i<m$. A bijection $\kappa:W_Z(\seq(Z))\to\seq(Z)$ can be given by
\begin{equation*}
\kappa(0)=\langle\rangle\quad\text{and}\quad\kappa((z,\langle z_0,\dots,z_{n-1}\rangle))=\langle z,z_0,\dots,z_{n-1}\rangle,
\end{equation*}
where $0$ is the unique element of $1\subseteq W_Z(\seq(Z))$ and $\langle\rangle$ is the empty sequence. It is straightforward to verify the equivalence from Definition~\ref{def:Kruskal-fixed-point}. Strictly speaking, this equivalence should hold with respect to a function
\begin{equation*}
\overline\kappa:\overline{W_Z\!\restriction\!\poz}(\seq(Z))\to\seq(Z),
\end{equation*}
where $W_Z\!\restriction\!\poz$ is the coded restriction of $W_Z$ and $\overline{W_Z\!\restriction\!\poz}$ is its class-sized reconstruction. According to Theorem~\ref{thm:reconstruct-class-dilator} and Example~\ref{ex:coded-class-iso}, there is an isomorphism
\begin{equation*}
\eta_{\seq(Z)}:\overline{W_Z\!\restriction\!\poz}(\seq(Z))\xrightarrow{\cong}W_Z(\seq(Z))
\end{equation*}
that preserves supports. We can conclude that $\seq(Z)$ and $\bar\kappa:=\kappa\circ\eta_{\seq(Z)}$ form a Kruskal fixed point of~$W_Z$. To show that $\seq(Z)$ is an initial fixed point, we consider another Kruskal fixed point $\kappa':W_Z(X)\to X$. The condition $f\circ\kappa=\kappa'\circ W_Z(f)$ from Definition~\ref{def:Kruskal-fixed-point} is equivalent to
\begin{align*}
f(\langle\rangle)&=f\circ\kappa(0)=\kappa'\circ W_Z(f)(0)=\kappa'(0),\\
f(\langle z_0,\dots,z_n\rangle)&=\begin{multlined}[t]f\circ\kappa((z_0,\langle z_1,\dots,z_n\rangle))=\kappa'\circ W_Z(f)((z_0,\langle z_1,\dots,z_n\rangle))=\\ =\kappa'((z_0,f(\langle z_1,\dots,z_n\rangle))).\end{multlined}
\end{align*}
Clearly, there is a unique function $f:\seq(Z)\to X$ that satisfies these recursive equations. A straightforward induction over sequences shows that $f(s)\leq_X f(t)$ implies $s\leq_{\seq(Z)} t$. Hence $\seq(Z)$ is an initial Kruskal fixed point of~$W_Z$, provably in~$\rca_0$. Theorem~\ref{thm:initial-fixed-point} and the uniqueness of initial objects yield $\T W_Z\cong\seq(Z)$.
\end{example}

The theory $\mathbf{\Pi^1_1\text{-}CA}_0$ extends $\rca_0$ by the principle of $\Pi^1_1$-comprehension. Working in this theory, we now prove the uniform Kruskal theorem that has been mentioned in the introduction. Note that Hasegawa~\cite{hasegawa97} has established the same result in a somewhat different setting. Theorem~\ref{thm:main} below, which is the main~result of our paper, shows that the use of $\Pi^1_1$-comprehension is necessary.

\begin{theorem}[$\pica_0$]\label{thm:Pi11-to-uniform-Kruskal}
If $W$ is a normal WPO-dilator, then $\T W$ is a wpo.
\end{theorem}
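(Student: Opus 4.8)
The plan is to run the minimal bad sequence argument of Nash-Williams. Its one non-elementary ingredient is the minimal bad sequence lemma, which is provable in $\pica_0$ by Marcone~\cite{marcone-bad-sequence}. Suppose, towards a contradiction, that the partial order $(\T W,\leq_{\T W})$ from Proposition~\ref{prop:TW-order} has an infinite bad sequence. On $\T W$ consider the relation $\prec$ of being a \emph{proper subterm}, i.e. the transitive closure of $\{(s,\circ(a,\sigma)):s\in a\}$; this is a well founded strict partial order, since the height function $h(\circ(a,\sigma))=\max(\{0\}\cup\{h(s)+1:s\in a\})$ strictly decreases along it. Applying the minimal bad sequence lemma to $(\T W,\leq_{\T W})$ and $\prec$, I obtain an infinite bad sequence $(t_i)_{i\in\mathbb N}$ that is minimal in the Nash-Williams sense: for each $i$ there is no $t\prec t_i$ such that $t_0,\dots,t_{i-1},t$ extends to an infinite bad sequence. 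Write $t_i=\circ(a_i,\sigma_i)$ with $a_i\in[\T W]^{<\omega}$ and $(|a_i|,\sigma_i)\in\trace(W)$, and put $B:=\bigcup_{i}a_i$, regarded as a countable suborder of $\T W$.

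The heart of the argument is to show that $B$ is a wpo. First note that whenever $\circ(a,\sigma)\in\T W$, each $s\in a$ satisfies $s\leq_{\T W}\circ(a,\sigma)$: writing $s=\circ(c,\rho)$ and taking $s$ itself as the witness in clause~(i) of Definition~\ref{def:T(W)}, reflexivity of $\leq_{\T W}$ gives the claim. Now assume $B$ is not a wpo and fix a bad sequence $(s_k)_{k\in\mathbb N}$ in $B$; for each $k$ choose $n_k$ with $s_k\in a_{n_k}$, set $m:=\min\{n_k:k\in\mathbb N\}$, and fix $k_0$ with $n_{k_0}=m$. I claim that $t_0,\dots,t_{m-1},s_{k_0},s_{k_0+1},s_{k_0+2},\dots$ is a bad sequence: indeed $t_i\not\leq_{\T W}t_j$ for $i<j<m$ since $(t_i)$ is bad, $s_\ell\not\leq_{\T W}s_{\ell'}$ for $k_0\le\ell<\ell'$ since $(s_k)$ is bad, and $t_i\leq_{\T W}s_\ell$ with $i<m$ and $\ell\ge k_0$ is impossible because it would give $t_i\leq_{\T W}s_\ell\leq_{\T W}t_{n_\ell}$ with $i<m\le n_\ell$, against the badness of $(t_i)$. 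Since $s_{k_0}\in a_m$ gives $s_{k_0}\prec t_m$, this shows that $t_0,\dots,t_{m-1},s_{k_0}$ extends to an infinite bad sequence, contradicting the minimality of $(t_i)$ at position $m$. Hence $B$ is a wpo.

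Because $W$ is a WPO-dilator, $\overline W(B)$ is a wpo by Definition~\ref{def:coded-wpo-dilator}. For each $i$ we have $a_i\in[B]^{<\omega}$ and $(|a_i|,\sigma_i)\in\trace(W)$, so $(a_i,\sigma_i)\in\overline W(B)$ by Definition~\ref{def:extend-po-dilator}. Hence the sequence $\big((a_i,\sigma_i)\big)_{i\in\mathbb N}$ in $\overline W(B)$ is good, i.e. there are $i<j$ with $(a_i,\sigma_i)\leq_{\overline W(B)}(a_j,\sigma_j)$; unfolding Definition~\ref{def:extend-po-dilator} this reads $W(|\iota_{a_i}^{a_i\cup a_j}|)(\sigma_i)\leq_{W(|a_i\cup a_j|)}W(|\iota_{a_j}^{a_i\cup a_j}|)(\sigma_j)$, where $a_i\cup a_j$ is the same finite suborder whether formed inside $B$ or inside $\T W$. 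Since $\leq_{\T W}$ partially orders $a_i\cup a_j$ by Proposition~\ref{prop:TW-order}, clause~(ii) of Definition~\ref{def:T(W)} yields $t_i=\circ(a_i,\sigma_i)\leq_{\T W}\circ(a_j,\sigma_j)=t_j$, contradicting the badness of $(t_i)$.

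I expect the main obstacle to be the proof that $B$ is a wpo: one has to pick the splicing index with care (here $m=\min_k n_k$, attained at $k_0$) so that the resulting sequence $t_0,\dots,t_{m-1},s_{k_0},s_{k_0+1},\dots$ is genuinely bad and genuinely violates minimality at the correct position, and one also has to be sure the minimal bad sequence lemma is legitimately applied to the relation $\prec$ — which is why it matters that proper-subtermhood on $\T W$ is an arithmetically definable well founded partial order. The passage from $\overline W(B)$ back to $\T W$ is then just a matter of matching Definitions~\ref{def:extend-po-dilator} and~\ref{def:T(W)}.
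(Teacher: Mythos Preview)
Your proof is correct and follows essentially the same Nash--Williams argument as the paper: obtain a minimal bad sequence, show that the set of immediate subterms is a wpo, apply the WPO-dilator hypothesis, and derive a contradiction. The only cosmetic differences are that you invoke Marcone's lemma with the subterm relation~$\prec$ where the paper constructs the minimal sequence by hand with respect to the length function~$l$, and that your final step reads $(a_i,\sigma_i)\in\overline W(B)$ directly while the paper routes through the fixed point map~$\kappa$ and the support condition; neither change is essential.
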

\begin{proof}
Aiming at a contradiction, we assume there is a bad sequence $f:\mathbb N\to\T W$, which means that we have $f(i)\not\leq_{\T W} f(j)$ for all $i<j$. Let us recall the length function $l:\T W\to\mathbb N$ from above. The famous proof method by Nash-Williams~\cite{nash-williams63} suggests to consider a bad sequence~$g:\mathbb N\to\T W$ with the following minimality property: If $h:\mathbb N\to\T W$ is another bad sequence and $i$ is the smallest number with~$g(i)\neq h(i)$, then we have $l(g(i))\leq l(h(i))$. To see that such a sequence exists, we consider the tree
\begin{equation*}
T=\{\langle h(0),\dots,h(n-1)\rangle\,|\,h:\mathbb N\to\T W\text{ a bad sequence and }n\in\mathbb N\}
\end{equation*}
of all finite sequences that can be extended into a bad sequence. Note that $T$ can be formed by $\Sigma^1_1$-comprehension, which is equivalent to $\Pi^1_1$-comprehension. The existence of~$f$ ensures that~$T$ is non-empty. It is clear by construction that~$T$ has no leaves. We say that a sequence $\langle\sigma_0,\dots,\sigma_{n-1}\rangle\in T$ is $l$-minimal if the G\"odel number of each entry $\sigma_i$ is minimal with the following property: For any $\sigma_i'\in\T W$ with $l(\sigma_i')<l(\sigma_i)$ we have $\langle\sigma_0,\dots,\sigma_{i-1},\sigma_i'\rangle\notin T$. A straightforward induction on~$n$ shows that~$T$ contains a unique $l$-minimal sequence of each length~$n$. To construct the minimal bad sequence $g:\mathbb N\to\T W$ that was promised above, we declare that $g(n)$ is the last entry of the $l$-minimal sequence of length~$n+1$. Now write
\begin{equation*}
g(n)=\circ(a_n,\sigma_n)\in\T W\quad\text{and}\quad X=\bigcup\{a_n\,|\,n\in\mathbb N\}\subseteq\T W.
\end{equation*}
Let us show that~$(X,\leq_{\T W})$ is a well partial order. Aiming at a contradiction, we assume that $s_0,s_1,\dots$ is a bad sequence in~$X$. By recursion, we construct sequences of indices $i(0)<i(1)<\dots$ and $j(0)<j(1)<\dots$ with $s_{i(k)}\in a_{j(k)}$. For $k=0$ we put $i(k)=0$ and pick some index $j(0)$ with $s_0\in a_{j(0)}$. In the step we observe that
\begin{equation*}
\bigcup\{a_j\,|\,j\leq j(k)\}\subseteq X
\end{equation*}
is a finite set. Since $s_0,s_1,\dots$ is bad, there must be some index $i(k+1)>i(k)$ such that $s_{i(k+1)}$ does not lie in this set. In view of $s_{i(k+1)}\in X$ we get $s_{i(k+1)}\in a_{j(k+1)}$ for some $j(k+1)>j(k)$. Now consider the sequence
\begin{equation*}
g(0),g(1),\dots,g(j(0)-1),s_{i(0)},s_{i(1)},s_{i(2)},\ldots\subseteq\T W.
\end{equation*}
In view of $s_{i(0)}\in a_{j(0)}$ we have $l(s_{i(0)})<l(g(j(0)))$. Hence the minimality of~$g$ implies that the given sequence is good. For $j<j(0)$ and $k\in\mathbb N$ we observe that $g(j)\leq_{\T W} s_{i(k)}$ would imply $g(j)\leq_{\T W} g(j(k))$, due to $s_{i(k)}\in a_{j(k)}$. Since~$g$ is bad, this is impossible. We conclude that $s_0,s_1,\dots$ must be good, which contradicts our assumption. We have thus established that $X$ is a wpo. Then $\overline W(X)$ is a wpo as well, since $W$ was assumed to be a WPO-dilator. For $\kappa:\overline W(\T W)\to\T W$ as in the proof of Theorem~\ref{thm:initial-fixed-point}, we can write
\begin{equation*}
g(n)=\kappa((a_n,\sigma_n))\quad\text{with}\quad (a_n,\sigma_n)\in\overline W(\T W).
\end{equation*}
For each number $n$ we have $\overline\supp_{\T W}((a_n,\sigma_n))=a_n\subseteq X=\rng(\iota_X^{\T W})$, where we write $\iota_X^{\T W}:X\hookrightarrow\T W$ for the inclusion. In Theorem~\ref{thm:extend-po-dilator} we have shown that $\overline W$ is a class-sized PO-dilator, so that it satisfies the support condition from Definition~\ref{def:po-dilator}. The latter allows us to write
\begin{equation*}
(a_n,\sigma_n)=\overline W(\iota_X^{\T W})(\tau_n)\quad\text{with}\quad \tau_n\in\overline W(X).
\end{equation*}
Since $\overline W(X)$ is a well partial order, we find indices $i<j$ with $\tau_i\leq_{\overline W(X)}\tau_j$. Also due to Theorem~\ref{thm:extend-po-dilator} and Definition~\ref{def:po-dilator}, we know that $\overline W(\iota_X^{\T W})$ is an embedding. We thus get $(a_i,\sigma_i)\leq_{\overline W(\T W)}(a_j,\sigma_j)$ and then
\begin{equation*}
g(i)=\kappa((a_i,\sigma_i))\leq_{\T W} \kappa((a_j,\sigma_j))=g(j).
\end{equation*}
This contradicts the assumption that~$g$ is bad.
\end{proof}

\section{Uniform lower bounds for Kruskal-type theorems}\label{sect:lower-bounds}

In the previous section we have constructed a partial order~$\T W$ relative to a normal PO-dilator~$W$. The present section establishes a lower bound on the maximal order type of~$\T W$. This bound will have the form~$\vartheta(D)$ for a suitable LO-dilator~$D$, where $\vartheta(D)$ is the linear order defined by Freund~\cite{freund-computable}. Our proof of the lower bound also justifies the uniform independence principle that was stated in the introduction.

The notion of LO-dilator has been explained in the text after Definition~\ref{def:po-dilator}. We write $\loz$ for the category with objects $n=\{0,\dots,n-1\}$ and all strictly increasing functions $m\to n$ as morphisms. Note that $\loz$ is a subcategory of the category~$\poz$ that was considered in Section~\ref{sect:formalize-po-dilators}. As in the case of~PO-dilators, each class-sized LO-dilator~$D$ restricts to a coded LO-dilator~$D\!\restriction\!\loz$. Conversely, any coded LO-dilator~$D$ can be extended into a class-sized LO-dilator~$\overline D$, in such a way that we get $\overline{D\!\restriction\!\loz}\cong D$ for class-sized~$D$. For the linear case, these facts are due to Girard~\cite{girard-pi2}. A detailed presentation in our terminology is given in~\cite[Section~2]{freund-computable}. In the latter paper, LO-dilators are predominantly denoted by~$T$; the class-sized extension of a coded LO-dilator~$T$ is written as~$D^T$ rather than~$\overline T$.

We will see that $\T W$ can be bounded in terms of~$\vartheta(D)$ if $W$ and $D$ are related as in the following definition. Note that each coded PO-dilator~$W:\poz\to\po$ restricts to a functor~$W\!\restriction\!\loz:\loz\to\po$. We can also consider an LO-dilator $D:\loz\to\lo$ as a functor from~$\loz$ to~$\po$, leaving the inclusion~$\lo\hookrightarrow\po$ implicit. With respect to the following definition, this means that each component $\nu_n:D(n)\to W(n)$ with $n\in\loz$ is a morphism in~$\po$, i.\,e.~a quasi embedding. 

\begin{definition}[$\rca_0$]\label{def:quasi-embedding-dilators}
A quasi embedding of a coded LO-dilator~$D$ into a coded PO-dilator~$W$ is defined to be a natural transformation $\nu:D\Rightarrow W\!\restriction\!\loz$.
\end{definition}

If $D$ and $W$ are represented in~$\rca_0$, then the underlying sets of the orders~$D(n)$ and~$W(n)$ are contained in~$\mathbb N$. In this case, a quasi embedding $\nu:D\Rightarrow W\!\restriction\!\loz$ can be represented by the set
\begin{equation*}
\nu=\{\langle n,\sigma,\tau\rangle\,|\,n\in\mathbb N\text{ and }\sigma\in D(n)\text{ and }\tau=\nu_n(\sigma)\in W(n)\}.
\end{equation*}
We want to show that a quasi embedding between coded dilators induces a quasi embedding between their class-sized extensions. The following lemma is needed as a preparation. We write $\supp^D:D\Rightarrow[\cdot]^{<\omega}$ and $\supp^W:W\Rightarrow[\cdot]^{<\omega}$ for the supports that come with~$D$ and~$W$.

\begin{lemma}[$\rca_0$]\label{lem:transfos-preserve-supps}
If $\nu:D\Rightarrow W\!\restriction\!\loz$ is a quasi embedding, then the equation $\supp^W_n\circ\nu_n=\supp^D_n$ holds for each $n\in\loz$.
\end{lemma}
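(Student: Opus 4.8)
The plan is to fix $n\in\loz$ and $\sigma\in D(n)$, abbreviate $a:=\supp^D_n(\sigma)$, and prove the two inclusions $\supp^W_n(\nu_n(\sigma))\subseteq a$ and $a\subseteq\supp^W_n(\nu_n(\sigma))$ in turn. The first inclusion comes almost for free: writing $e\colon|a|\to n$ for the strictly increasing enumeration of~$a$, the support condition for the LO-dilator~$D$ gives $\sigma=D(e)(\bar\sigma)$ for some $\bar\sigma\in D(|a|)$; naturality of~$\nu$ then yields $\nu_n(\sigma)=W(e)(\nu_{|a|}(\bar\sigma))\in\rng(W(e))$, and the support condition for the PO-dilator~$W$ forces $\supp^W_n(\nu_n(\sigma))\subseteq\rng(e)=a$.

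The real content is the reverse inclusion, which I would establish by contradiction. Suppose some $k\in a$ does not lie in $\supp^W_n(\nu_n(\sigma))$. Let $j\colon n-1\to n$ be the strictly increasing map with $\rng(j)=n\setminus\{k\}$; since $\supp^W_n(\nu_n(\sigma))\subseteq\rng(j)$, the support condition for~$W$ provides $\rho\in W(n-1)$ with $\nu_n(\sigma)=W(j)(\rho)$. I would then introduce the two strictly increasing maps $q,q'\colon n\to n+1$ given by $q(i)=i$ for $i\le k$, $q(i)=i+1$ for $i>k$, and $q'(i)=i$ for $i<k$, $q'(i)=i+1$ for $i\ge k$. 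These agree on $n\setminus\{k\}=\rng(j)$, so $q\circ j=q'\circ j$, but they have different ranges: $q$ omits $k+1$ from $n+1$, while $q'$ omits~$k$. Combining $q\circ j=q'\circ j$ with naturality of~$\nu$ and functoriality of~$W$ gives
\begin{equation*}
\nu_{n+1}(D(q)(\sigma))=W(q)(W(j)(\rho))=W(q\circ j)(\rho)=W(q'\circ j)(\rho)=W(q')(W(j)(\rho))=\nu_{n+1}(D(q')(\sigma)).
\end{equation*}
Since every component of~$\nu$, in particular $\nu_{n+1}$, is a quasi embedding and hence injective, this forces $D(q)(\sigma)=D(q')(\sigma)$. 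On the other hand, naturality of the supports of~$D$ gives $\supp^D_{n+1}(D(q)(\sigma))=[q]^{<\omega}(a)$ and $\supp^D_{n+1}(D(q')(\sigma))=[q']^{<\omega}(a)$; because $k\in a$, the element $k+1=q'(k)$ lies in $[q']^{<\omega}(a)$ but not in $[q]^{<\omega}(a)\subseteq\rng(q)=(n+1)\setminus\{k+1\}$, so these two finite sets are distinct. This contradicts $D(q)(\sigma)=D(q')(\sigma)$, and the reverse inclusion follows.

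I expect the choice and verification of the pair $q,q'$ to be the delicate point. The underlying obstacle is that $\loz$ has no nonidentity endomorphisms, so one cannot ``move $k$ around'' inside~$n$ and is forced to pass to~$n+1$; one must then arrange simultaneously that $q$ and $q'$ become equal after precomposition with~$j$ and that they remain distinguishable on~$a$, which is exactly what pins down the explicit definition above. Everything else is routine once the right facts are in place: quasi embeddings between partial orders are injective (immediately from antisymmetry), and the support condition of Definition~\ref{def:coded-po-dilator}, together with its linear analogue, is precisely what lets one factor elements of $W(n)$ and $D(n)$ through inclusions of finite subobjects. The whole argument is the adaptation to the mixed linear/partial setting of Girard's observation that natural transformations of dilators preserve supports, as mentioned after Theorem~\ref{thm:reconstruct-class-dilator}.
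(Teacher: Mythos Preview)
Your proof is correct. The paper itself does not give a self-contained argument but refers to \cite[Lemma~2.17]{freund-rathjen_derivatives} and remarks that the proof there carries over when the target~$W$ is a PO-dilator; you have essentially written out that argument in full. The two-map trick with $q,q':n\to n+1$ agreeing on $\rng(j)=n\setminus\{k\}$ but differing at~$k$ is exactly the standard device, and your use of injectivity of~$\nu_{n+1}$ together with naturality of~$\supp^D$ to derive the contradiction is clean and complete.
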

\begin{proof}
For natural transformations between LO-dilators this has been shown by Girard~\cite{girard-pi2}. A proof that uses our terminology can be found in~\cite[Lemma~2.17]{freund-rathjen_derivatives}. It is straightforward to check that this proof also applies when~$W$ is a PO-dilator.
\end{proof}

As in the case of PO-dilators, we write
\begin{equation*}
\trace(D)=\{(n,\sigma)\,|\,n\in\loz\text{ and }\sigma\in D(n)\text{ with }\supp^D_n(\sigma)=n\}
\end{equation*}
for the trace of an LO-dilator~$D$. According to~\cite[Definition~2.3]{freund-computable}, the class-sized extension of~$D$ is given by
\begin{equation*}
\overline D(X)=\{(a,\sigma)\,|\,a\in[X]^{<\omega}\text{ and }(|a|,\sigma)\in\trace(D)\},
\end{equation*}
for each linear order~$X$. The previous lemma ensures that $(n,\sigma)\in\trace(D)$ implies $(n,\nu_n(\sigma))\in\trace(W)$, which justifies the following construction.

\begin{definition}[$\rca_0$]
Consider a coded LO-dilator~$D$, a coded PO-dilator~$W$, and a quasi embedding $\nu:D\Rightarrow W$. For each linear order~$X$, we define a function $\overline\nu_X:\overline D(X)\Rightarrow \overline W(X)$ by setting $\overline\nu_X((a,\sigma))=(a,\nu_{|a|}(\sigma))$.
\end{definition}

Let us verify the expected property:

\begin{lemma}[$\rca_0$]\label{lem:quasi-embedding-extend}
Assume that $\nu:D\Rightarrow W$ is a quasi embedding. Then the functions $\overline\nu_X:\overline D(X)\Rightarrow\overline W(X)$ form a natural family of quasi embeddings. Furthermore we have $\overline\supp^W_X\circ\overline\nu_X=\overline\supp^D_X$ for each linear order~$X$.
\end{lemma}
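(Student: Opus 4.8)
The plan is to check, in order, that $\overline\nu_X$ is well defined, that it preserves supports, that it is a quasi embedding, and that the family $(\overline\nu_X)_X$ is natural. Each point reduces to the naturality of $\nu$ at an appropriate morphism of $\loz$, applied just as in the proofs of Theorems~\ref{thm:extend-po-dilator} and~\ref{thm:reconstruct-class-dilator}, so I do not expect any genuine obstacle; the only point requiring attention is that every map at which we invoke naturality really lies in $\loz$ rather than merely in $\poz$, which will hold because the orders $X$, $Y$ and all the finite suborders in play are linear.

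Well-definedness and support preservation are essentially immediate. Given $(a,\sigma)\in\overline D(X)$ we have $(|a|,\sigma)\in\trace(D)$, hence $\supp^D_{|a|}(\sigma)=|a|$; Lemma~\ref{lem:transfos-preserve-supps} then gives $\supp^W_{|a|}(\nu_{|a|}(\sigma))=\supp^D_{|a|}(\sigma)=|a|$, so $(|a|,\nu_{|a|}(\sigma))\in\trace(W)$ and $\overline\nu_X((a,\sigma))=(a,\nu_{|a|}(\sigma))\in\overline W(X)$ (this is the remark preceding the definition of $\overline\nu_X$). Support preservation follows at once, since $\overline\supp^W_X((a,\nu_{|a|}(\sigma)))=a=\overline\supp^D_X((a,\sigma))$ by the definition of the support functions in Definition~\ref{def:extend-po-dilator}.

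For the quasi-embedding property, note that $a\cup b\subseteq X$ is linear, so $|\iota_a^{a\cup b}|\colon|a|\to|a\cup b|$ and $|\iota_b^{a\cup b}|\colon|b|\to|a\cup b|$ are morphisms of $\loz$; naturality of $\nu$ at these rewrites $W(|\iota_a^{a\cup b}|)(\nu_{|a|}(\sigma))=\nu_{|a\cup b|}(D(|\iota_a^{a\cup b}|)(\sigma))$ and similarly for $b$. Thus the hypothesis $\overline\nu_X((a,\sigma))\leq_{\overline W(X)}\overline\nu_X((b,\tau))$, unfolded via Definition~\ref{def:extend-po-dilator}, reads $\nu_{|a\cup b|}(D(|\iota_a^{a\cup b}|)(\sigma))\leq_{W(|a\cup b|)}\nu_{|a\cup b|}(D(|\iota_b^{a\cup b}|)(\tau))$, whence, $\nu_{|a\cup b|}$ being a quasi embedding, $D(|\iota_a^{a\cup b}|)(\sigma)\leq_{D(|a\cup b|)}D(|\iota_b^{a\cup b}|)(\tau)$, i.e.\ $(a,\sigma)\leq_{\overline D(X)}(b,\tau)$. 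For naturality, let $f\colon X\to Y$ be an embedding of linear orders and $(a,\sigma)\in\overline D(X)$; unwinding the definitions of $\overline D(f)$, $\overline W(f)$ and $\overline\nu$, both $\overline\nu_Y(\overline D(f)((a,\sigma)))$ and $\overline W(f)(\overline\nu_X((a,\sigma)))$ have first component $[f]^{<\omega}(a)$, while their second components are $\nu_{|[f]^{<\omega}(a)|}(D(|f\!\restriction\!a|)(\sigma))$ and $W(|f\!\restriction\!a|)(\nu_{|a|}(\sigma))$, which coincide by naturality of $\nu$ at $|f\!\restriction\!a|\colon|a|\to|[f]^{<\omega}(a)|$---a morphism of $\loz$ since $f\!\restriction\!a$ is a surjective quasi embedding of finite linear orders, so that $|f\!\restriction\!a|$ is strictly increasing. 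This exhausts the plan; as indicated, the sole recurring subtlety is the check that the relevant maps lie in $\loz$, which is guaranteed by the linearity of all orders involved.
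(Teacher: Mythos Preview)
Your proof is correct and follows essentially the same route as the paper: unfold the definitions, use naturality of $\nu$ at the maps $|\iota_a^{a\cup b}|$, $|\iota_b^{a\cup b}|$, $|f\!\restriction\!a|$ (all of which lie in $\loz$, as you correctly note), and invoke Lemma~\ref{lem:transfos-preserve-supps} for supports.

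One small point you elide: from $D(|\iota_a^{a\cup b}|)(\sigma)\leq_{D(|a\cup b|)}D(|\iota_b^{a\cup b}|)(\tau)$ you write ``i.e.\ $(a,\sigma)\leq_{\overline D(X)}(b,\tau)$'', but the order on $\overline D(X)$ in \cite[Definition~2.2]{freund-computable} is given via the \emph{strict} relation, so the passage is not literally definitional. The paper therefore splits into the case of strict inequality (which is the definition) and the case of equality, where one recovers $a=b$ and $\sigma=\tau$ by the support argument from the proof of Theorem~\ref{thm:extend-po-dilator}. This is a two-line addition, not a genuine gap. For naturality, the paper goes one step further than you and observes that $|f\!\restriction\!a|$ is in fact the identity on $|a|=|[f]^{<\omega}(a)|$ (since $f$ is an embedding of linear orders), which makes the verification even more direct; your use of naturality of $\nu$ at this map is equally valid.
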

\begin{proof}
We begin by showing that $\overline\nu_X$ is a quasi embedding. In view of Definition~\ref{def:extend-po-dilator}, an inequality $\overline\nu_X((a,\sigma))\leq_{\overline W(X)}\overline\nu_X((b,\tau))$ amounts to
\begin{equation*}
W(|\iota_a^{a\cup b}|)\circ\nu_{|a|}(\sigma)\leq_{W(|a\cup b|)} W(|\iota_b^{a\cup b}|)\circ\nu_{|b|}(\tau).
\end{equation*}
Due to the naturality of~$\nu$, the latter is equivalent to
\begin{equation*}
\nu_{|a\cup b|}\circ D(|\iota_a^{a\cup b}|)(\sigma)\leq_{W(|a\cup b|)} \nu_{|a\cup b|}\circ D(|\iota_b^{a\cup b}|)(\tau).
\end{equation*}
Since $\nu_{|a\cup b|}$ is a quasi embedding, we obtain
\begin{equation*}
D(|\iota_a^{a\cup b}|)(\sigma)\leq_{D(|a\cup b|)} D(|\iota_b^{a\cup b}|)(\tau).
\end{equation*}
First assume that we have equality. As in the proof of~\cite[Lemma~2.2]{freund-computable} (see~also the proof of Theorem~\ref{thm:extend-po-dilator} above), we can deduce $a=b$ and $\sigma=\tau$. Then reflexivity in $\overline D(X)$ yields $(a,\sigma)\leq_{\overline D(X)} (b,\tau)$. If we have $D(|\iota_a^{a\cup b}|)(\sigma)<_{D(|a\cup b|)} D(|\iota_b^{a\cup b}|)(\tau)$, then~$(a,\sigma)<_{\overline D(X)}(b,\tau)$ holds according to~\cite[Definition~2.2]{freund-computable}. Let us now establish naturality. Given a quasi embedding $f:X\to Y$ of linear orders, we observe
\begin{equation*}
\overline W(f)\circ\overline\nu_X((a,\sigma))=\overline W(f)((a,\nu_{|a|}(\sigma)))=([f]^{<\omega}(a),W(|f\!\restriction\!a|)\circ\nu_{|a|}(\sigma)).
\end{equation*}
Since~$X$ and~$Y$ are linear, we know that $f$ is in fact an embedding. Hence $a\subseteq X$ is isomorphic to $[f]^{<\omega}(a)\subseteq Y$. As each finite partial order has a unique representative in~$\poz$, it follows that the orders $|a|$ and $|[f]^{<\omega}(a)|$ are equal. The function $\en_{[f]^{<\omega}(a)}:|[f]^{<\omega}(a)|=|a|\to[f]^{<\omega}(a)$ is uniquely determined as the increasing enumeration of the order~$[f]^{<\omega}(a)\subseteq Y$, because the latter is linear. This allows us to conclude~$\en_{[f]^{<\omega}(a)}=(f\!\restriction\! a)\circ\en_a$. Also recall that $|f\!\restriction\!a|:|a|\to|[f]^{<\omega}(a)|$ is characterized as the unique function with $\en_{[f]^{<\omega}(a)}\circ |f\!\restriction\!a|=(f\!\restriction\! a)\circ\en_a$. It follows that $|f\!\restriction\!a|$ is the identity on~$|a|=|[f]^{<\omega}(a)|$. We now obtain
\begin{multline*}
\overline W(f)\circ\overline\nu_X((a,\sigma))=([f]^{<\omega}(a),W(|f\!\restriction\!a|)\circ\nu_{|a|}(\sigma))=([f]^{<\omega}(a),\nu_{|[f]^{<\omega}(a)|}(\sigma))=\\
=\overline\nu_Y(([f]^{<\omega}(a),\sigma))=\overline\nu_Y\circ\overline D(f)((a,\sigma)),
\end{multline*}
where the last equality relies on~\cite[Definition~2.2]{freund-computable}. Finally, we observe
\begin{equation*}
\overline\supp^W_X\circ\overline\nu_X((a,\sigma))=\overline\supp^W_X((a,\nu_{|a|}(\sigma)))=a=\overline\supp^D_X((a,\sigma)),
\end{equation*}
as we have claimed in the lemma.
\end{proof}

Recall that Freund~\cite{freund-computable} has defined a linear order~$\vartheta(D)$ for any LO-dilator~$D$. Elements of $\vartheta(D)$ have the form $\vartheta^a_\sigma$ with $a\in[\vartheta(D)]^{<\omega}$ and $(|a|,\sigma)\in\trace(D)$ (in~\cite{freund-computable} they are written as $\vartheta^{s_0,\dots,s_{n-1}}_\sigma$ for $a=\{s_0,\dots,s_{n-1}\}$ with $s_0<_{\vartheta(D)}\dots<_{\vartheta(D)}s_{n-1}$). The order relation is given by
\begin{equation*}
\vartheta^a_\sigma<_{\vartheta(D)}\vartheta^b_\tau\Leftrightarrow\begin{cases}
D(|\iota_a^{a\cup b}|)(\sigma)<_{D(|a\cup b|)} D(|\iota_b^{a\cup b}|)(\tau)\text{ and $s<_{\vartheta(D)}\vartheta^b_\tau$ for all $s\in a$},\\
\text{or $\vartheta^a_\sigma\leq_{\vartheta(D)}t$ for some $t\in b$}.
\end{cases}
\end{equation*}
As in the case of $\T W$, we define a length function $l:\vartheta(D)\to\mathbb N$ by
\begin{equation*}
l(\vartheta^a_\sigma)=\max\{\ulcorner\vartheta^a_\sigma\urcorner,1+\textstyle\sum_{s\in a}2\cdot l(s)\}.
\end{equation*}
We now come to one of the central observations of the present paper. The following result implies that $\vartheta(D)$ is a lower bound for~$\T W$, as we shall see below.

\begin{theorem}[$\rca_0$]\label{thm:quasi-embedding}
Consider an LO-dilator~$D$ and a normal PO-dilator~$W$. If there is a quasi embedding of~$D$ into~$W$, then there is a quasi embedding of the linear order~$\vartheta(D)$ into the partial order~$\T W$. 
\end{theorem}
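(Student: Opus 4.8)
The plan is to construct the required quasi embedding $g\colon\vartheta(D)\to\T W$ by a recursion that mirrors the definition of the canonical map into an initial Kruskal fixed point in the proof of Theorem~\ref{thm:initial-fixed-point}, using the given $\nu$ to turn a $\vartheta(D)$-term into a $\T W$-term. Recall that every element of $\vartheta(D)$ has the form $\vartheta^a_\sigma$ with $a\in[\vartheta(D)]^{<\omega}$ and $(|a|,\sigma)\in\trace(D)$; since $a$ is a finite suborder of the linear order $\vartheta(D)$, it is itself a linear order, so $|a|\in\loz$ and $\nu_{|a|}$ is available. I would set
\[
g(\vartheta^a_\sigma)=\circ\bigl([g]^{<\omega}(a),\ W(|g\restriction a|)(\nu_{|a|}(\sigma))\bigr),
\]
where $g\restriction a\colon a\to[g]^{<\omega}(a)$ is the (surjective) restriction of $g$ and $[g]^{<\omega}(a)\subseteq\T W$ carries the inherited suborder structure. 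Because $s\in a$ forces $l(s)<l(\vartheta^a_\sigma)$ for the length functions recalled before the theorem, this is a legitimate recursion, just as in Theorem~\ref{thm:initial-fixed-point}.

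I would then prove the two implications
\[
r\in\vartheta(D)\ \Rightarrow\ g(r)\in\T W\qquad\text{and}\qquad g(s)\leq_{\T W}g(t)\ \Rightarrow\ s\leq_{\vartheta(D)}t
\]
by simultaneous induction on $l(r)$ and on $l(s)+l(t)$, following the bookkeeping of Theorem~\ref{thm:initial-fixed-point}. For the first implication, the induction hypothesis shows that $g\restriction a\colon a\to[g]^{<\omega}(a)$ is a surjective quasi embedding. Since quasi embeddings of dilators preserve supports (Lemma~\ref{lem:transfos-preserve-supps}), we get $(|a|,\nu_{|a|}(\sigma))\in\trace(W)$; applying Lemma~\ref{lem:trace-preserved} to the surjective quasi embedding $g\restriction a$ then yields $(|[g]^{<\omega}(a)|,W(|g\restriction a|)(\nu_{|a|}(\sigma)))\in\trace(W)$, and Proposition~\ref{prop:TW-order} makes the requirement that $\leq_{\T W}$ be a partial order on $[g]^{<\omega}(a)$ automatic; hence $g(r)$ is a legitimate term of $\T W$. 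The key point is that $\sigma$ is transported along $W(|g\restriction a|)$ rather than kept fixed: this way only the quasi-embedding property of $g\restriction a$ is used and no order-preservation property of $g$, which in general fails since $\T W$ is not linear. It follows in particular that $g$ is injective.

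For the second implication, assume $g(\vartheta^a_\sigma)\leq_{\T W}g(\vartheta^b_\tau)$ and set $c=a\cup b$ and $c'=[g]^{<\omega}(c)=[g]^{<\omega}(a)\cup[g]^{<\omega}(b)$. I would distinguish the two clauses of Definition~\ref{def:T(W)}. If clause~(i) holds, then $g(\vartheta^a_\sigma)\leq_{\T W}g(t)$ for some $t\in b$; the induction hypothesis gives $\vartheta^a_\sigma\leq_{\vartheta(D)}t$, so the second clause in the definition of the $\vartheta(D)$-order yields $\vartheta^a_\sigma<_{\vartheta(D)}\vartheta^b_\tau$. If clause~(ii) holds, then, using the equalities $\iota_{[g]^{<\omega}(a)}^{c'}\circ(g\restriction a)=(g\restriction c)\circ\iota_a^c$ and its analogue for $b$, the naturality of $\nu$ along $|\iota_a^c|$ and $|\iota_b^c|$, and the functoriality of $W$ and of $|\cdot|$ (exactly as in the proof of Theorem~\ref{thm:extend-po-dilator}), the clause-(ii) inequality rewrites as
\[
W(|g\restriction c|)\bigl(\nu_{|c|}(D(|\iota_a^c|)(\sigma))\bigr)\ \leq_{W(|c'|)}\ W(|g\restriction c|)\bigl(\nu_{|c|}(D(|\iota_b^c|)(\tau))\bigr),
\]
and cancelling the quasi embeddings $W(|g\restriction c|)$ and $\nu_{|c|}$ leaves $D(|\iota_a^c|)(\sigma)\leq_{D(|c|)}D(|\iota_b^c|)(\tau)$. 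Since $D(|c|)$ is linear, this is either an equality --- in which case the support argument from the proof of Theorem~\ref{thm:extend-po-dilator} (equivalently \cite[Lemma~2.2]{freund-computable}, applied to $D$) forces $a=b$ and $\sigma=\tau$, so $\vartheta^a_\sigma=\vartheta^b_\tau$ --- or a strict inequality, in which case it remains to check $s<_{\vartheta(D)}\vartheta^b_\tau$ for all $s\in a$. Here each $g(s)$ with $s\in a$ lies in $[g]^{<\omega}(a)$, so $g(s)\leq_{\T W}g(\vartheta^a_\sigma)$ by clause~(i) of Definition~\ref{def:T(W)} together with reflexivity; combined with the hypothesis this gives $g(s)\leq_{\T W}g(\vartheta^b_\tau)$, hence $s\leq_{\vartheta(D)}\vartheta^b_\tau$ by the induction hypothesis, and $s=\vartheta^b_\tau$ is impossible since it would yield $g(\vartheta^a_\sigma)=g(\vartheta^b_\tau)$ by antisymmetry, contradicting the strict inequality just obtained (all maps involved, including $g$ on $a\cup b$, being injective). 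Thus the first clause of the $\vartheta(D)$-order applies and $\vartheta^a_\sigma<_{\vartheta(D)}\vartheta^b_\tau$.

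All of this can be carried out in $\rca_0$, the recursion defining $g$ and the simultaneous induction being bounded via the length functions as in the earlier constructions. The step I expect to be the main obstacle is the clause-(ii) case of the second implication, where one must carefully chase the transported datum $W(|g\restriction a|)(\nu_{|a|}(\sigma))$ through the structural maps of $\T W$ down to the level of $D(|c|)$ and then match the outcome with the two-case definition of $<_{\vartheta(D)}$, keeping track of the strict-versus-nonstrict distinction and of the side condition in the first case. Everything there becomes routine once one has the two essential inputs: the naturality of $\nu$, and the observation --- made possible by Lemma~\ref{lem:trace-preserved} --- that $g$ only needs to reflect the order of $\vartheta(D)$, never to preserve it.
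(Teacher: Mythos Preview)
Your proposal is correct and follows essentially the same approach as the paper: the same recursive definition of the map, the same simultaneous induction on $l(r)$ and $l(s)+l(t)$, and the same case split along clauses~(i)/(ii) of Definition~\ref{def:T(W)} with the naturality-and-cancellation argument in clause~(ii). The only minor divergence is in ruling out $s=\vartheta^b_\tau$ at the end: the paper derives the contradiction directly from the height function via Lemma~\ref{lem:order-heights} (since $g(s)\in[g]^{<\omega}(a)$ forces $h(g(s))<h(g(\vartheta^a_\sigma))$, while $g(\vartheta^a_\sigma)\leq_{\T W}g(s)$ would give the reverse), whereas you unpack the resulting term equality $g(\vartheta^a_\sigma)=g(\vartheta^b_\tau)$ using injectivity of $g$ on $a\cup b$---both arguments are sound.
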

\begin{proof}
Assume that $\nu:D\Rightarrow W$ is a quasi embedding. We define $f:\vartheta(D)\to\T W$ by recursion over the terms in~$\vartheta(D)$, setting
\begin{equation*}
f(\vartheta^a_\sigma):=\circ([f]^{<\omega}(a),W(|f\!\restriction\!a|)\circ\nu_{|a|}(\sigma)).
\end{equation*}
To show that the recursion is successful, we verify
\begin{align*}
r\in\vartheta(D)\,&\Rightarrow\,f(r)\in\T W,\\
f(s)\leq_{\mathcal T W} f(t)\,&\Rightarrow\, s\leq_{\vartheta(D)} t
\end{align*}
by simultaneous induction on $l(r)$ and $l(s)+l(t)$, respectively. To establish the first implication we write $r=\vartheta^a_\sigma$. Given $(|a|,\sigma)\in\trace(D)$, we can invoke Lemma~\ref{lem:transfos-preserve-supps} to get $(|a|,\nu_{|a|}(\sigma))\in\trace(W)$. The simultaneous induction hypothesis ensures that the function $f\!\restriction\!a:a\to[f]^{<\omega}(a)\subseteq\T W$ is a quasi embedding. Then Lemma~\ref{lem:trace-preserved} yields
\begin{equation*}
(|[f]^{<\omega}(a)|,W(|f\!\restriction\!a|)\circ\nu_{|a|}(\sigma))\in\trace(W).
\end{equation*}
In view of Definition~\ref{def:T(W)} we get $f(r)=f(\vartheta^a_\sigma)\in\T W$, as desired. Let us now consider an inequality $f(s)\leq_{\T W} f(t)$ with $s=\vartheta^a_\sigma$ and $t=\vartheta^b_\tau$. First assume that the latter holds by clause~(i) of Definition~\ref{def:T(W)}, which means that we have
\begin{equation*}
f(s)\leq_{\T W}t'\quad\text{for some $t'\in[f]^{<\omega}(b)$}.
\end{equation*}
For $t'=f(t'')$ with $t''\in b$, the induction hypothesis yields $s\leq_{\vartheta(D)}t''$. This implies $s<_{\vartheta(D)}t$, by the definition of the order on~$\vartheta(D)$. Now assume that $f(s)\leq_{\T W} f(t)$ holds by clause~(ii) of Definition~\ref{def:T(W)}, which amounts to
\begin{equation*}
W\left(\left|\iota_{[f]^{<\omega}(a)}^{[f]^{<\omega}(a\cup b)}\right|\circ\left|f\!\restriction\! a\right|\right)\circ\nu_{|a|}(\sigma)\leq_{W(|[f]^{<\omega}(a\cup b)|)}
W\left(\left|\iota_{[f]^{<\omega}(b)}^{[f]^{<\omega}(a\cup b)}\right|\circ\left|f\!\restriction\! b\right|\right)\circ\nu_{|b|}(\tau).
\end{equation*}
Due to the induction hypothesis, we know that $f\!\restriction\!(a\cup b):a\cup b\to [f]^{<\omega}(a\cup b)$ is a quasi embedding. As in the proof of Theorem~\ref{thm:extend-po-dilator}, we can then infer
\begin{equation*}
W(|\iota_a^{a\cup b}|)\circ\nu_{|a|}(\sigma)\leq_{W(|a\cup b|)} W(|\iota_b^{a\cup b}|)\circ\nu_{|b|}(\tau).
\end{equation*}
By naturality, this amounts to $\nu_{|a\cup b|}\circ D(|\iota_a^{a\cup b}|)(\sigma)\leq_{W(|a\cup b|)}\nu_{|a\cup b|}\circ D(|\iota_b^{a\cup b}|)(\tau)$. Since $\nu_{|a\cup b|}:D(|a\cup b|)\to W(|a\cup b|)$ is a quasi embedding, we obtain
\begin{equation*}
D(|\iota_a^{a\cup b}|)(\sigma)\leq_{D(|a\cup b|)}D(|\iota_b^{a\cup b}|)(\tau).
\end{equation*}
If we have equality, then we get $a=b$ and $\sigma=\tau$, as in the proof of the previous lemma. In this case, $s=\vartheta^a_\sigma\leq_{\vartheta(D)}\vartheta^b_\tau=t$ holds by reflexivity. Now assume that the above inequality is strict. In order to conclude $s<_{\vartheta(D)}t$, we need to establish $s'<_{\vartheta(D)}t$ for arbitrary $s'\in a$. Let us observe that $f(s')\in[f]^{<\omega}(a)$ implies
\begin{equation*}
f(s')\leq_{\T W}\circ([f]^{<\omega}(a),W(|f\!\restriction\!a|)\circ\nu_{|a|}(\sigma))=f(s)\leq_{\T W} f(t).
\end{equation*}
In view of $l(s')<l(s)$, the induction hypothesis yields $s'\leq_{\vartheta(D)} t$. To exclude equality, we deduce a contradiction from the assumption $s'=t$. The latter implies that we have $f(s)\leq_{\T W}f(t)=f(s')$. By Lemma~\ref{lem:order-heights} we get $h(f(s))\leq h(f(s'))$. However, in view of $f(s')\in[f]^{<\omega}(a)$ we also have
\begin{equation*}
h(f(s'))<h(\circ([f]^{<\omega}(a),W(|f\!\restriction\!a|)\circ\nu_{|a|}(\sigma)))=h(f(s)),
\end{equation*}
which yields the required contradiction.
\end{proof}

Following de Jongh and Parikh~\cite{deJongh-Parikh}, we write $o(X)$ for the maximal order type of a well partial order~$X$. The latter can be given as
\begin{equation*}
o(X)=\sup\{\alpha\,|\,\text{there is a quasi embedding of~$\alpha$ into~$X$}\},
\end{equation*}
where the ordinal~$\alpha$ is identified with its ordered set of predecessors. If $\T W$ is a well partial order, then the conclusion of the previous theorem implies that $\vartheta(D)$ is a well order with order type~$o(\vartheta(D))\leq o(\T W)$. A sound theory~$\mathbf T\supseteq\rca_0$ with proof theoretic ordinal at most $o(\vartheta(D))$ cannot prove that $\vartheta(D)$ is well founded (provided that $D$ and hence $\vartheta(D)$ is computable). This establishes the uniform independence principle that was stated towards the end of the introduction. A~similar argument yields the following result, which is useful because it allows us to work in the stronger base theory~$\aca_0$ of arithmetical comprehension. The result will eventually be superseded by Theorem~\ref{thm:main}.

\begin{lemma}[$\rca_0+\cac$]\label{lem:uniform-Kruskal_ACA}
Assume that $\T W$ is a well partial order whenever~$W$ is a normal WPO-dilator. Then arithmetical comprehension holds.
\end{lemma}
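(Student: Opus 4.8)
The plan is to apply the hypothesis to the normal WPO-dilators $W_Z$ introduced in the previous section and to recognise the resulting well partial orders as classical combinatorial objects whose well partial orderedness is already known to entail arithmetical comprehension.

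First I would fix an arbitrary countable well partial order~$Z$ and consider the transformation with $W_Z(X)=1+Z\times X$ from Examples~\ref{ex:po-dilator-aca}, \ref{ex:W_Z-normal} and~\ref{ex:higman}. It is normal by Example~\ref{ex:W_Z-normal}, and $\rca_0+\cac$ proves that it is a (coded) WPO-dilator, as noted in Example~\ref{ex:coded-class-iso}; this is the point at which the chain-antichain principle is used, since the statement that $1+Z\times X$ is a wpo whenever~$X$ is a wpo is not provable in $\mathbf{WKL}_0$. The hypothesis of the lemma now gives that $\T W_Z$ is a well partial order, and Example~\ref{ex:higman} identifies $\T W_Z$ with the order $\seq(Z)$ of finite sequences over~$Z$ under Higman's embedding. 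Since $Z$ was an arbitrary countable wpo, we have established Higman's lemma: $\seq(Z)$ is a well partial order whenever $Z$ is.

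Second, I would invoke the known fact that Higman's lemma implies arithmetical comprehension over $\rca_0$; see the analysis in~\cite{cholak-RM-wpo}. For a self-contained argument one uses that $\aca_0$ is equivalent over $\rca_0$ to the assertion that every one-to-one function $f\colon\mathbb N\to\mathbb N$ has a range. Given such an~$f$, one constructs --- in $\rca_0$, using only injectivity of~$f$ --- an $f$-computable countable wpo~$Z_f$ and an explicit, $f$-definable sequence in $\seq(Z_f)$ with the following property: from any infinite $\seq(Z_f)$-increasing subsequence of this sequence (which a wpo together with $\cac$ provides) one can read off, for each~$n$, a numerical bound beyond which no preimage of~$n$ under~$f$ appears. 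Applying Higman's lemma to~$Z_f$ supplies such a subsequence, and the resulting bounding function yields the range of~$f$ by $\Delta^0_1$-comprehension.

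The main obstacle is the construction of~$Z_f$, i.e.\ reproving the classical fact that Higman's lemma entails $\aca_0$: one must simultaneously arrange that $\rca_0$ proves $Z_f$ to be a well partial order and that the combinatorics of Higman embeddings inside $\seq(Z_f)$ genuinely pin down the range of~$f$, rather than being satisfiable for trivial reasons. A way to avoid this issue altogether is to instead apply the hypothesis to the normal WPO-dilator $X\mapsto 1+X\times X$, which is again a WPO-dilator over $\rca_0+\cac$ and whose fixed point $\T W$ is an order of finite binary trees under a Kruskal-style embedding; that a Kruskal-type theorem entails $\aca_0$ is classical and may simply be cited, for instance from~\cite{simpson09} or~\cite{rathjen-weiermann-kruskal}.
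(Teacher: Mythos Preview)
Your argument is correct and follows essentially the same route as the paper: apply the hypothesis to the normal WPO-dilator $W_Z(X)=1+Z\times X$, use Example~\ref{ex:higman} to identify $\T W_Z$ with $\seq(Z)$, and conclude via the known equivalence of Higman's lemma with arithmetical comprehension. The only difference is bibliographic: for this last equivalence the paper cites Simpson~\cite{simpson-higman} and Girard~\cite{girard87} rather than~\cite{cholak-RM-wpo}, and it does not attempt the self-contained sketch or the binary-tree alternative you outline in your final two paragraphs.
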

\begin{proof}
In Example~\ref{ex:po-dilator-aca} we have constructed a PO-dilator~$W_Z$ with
\begin{equation*}
W_Z(X)=1+Z\times X,
\end{equation*}
for each partial order~$Z$. From Example~\ref{ex:W_Z-normal} we know that $W_Z$ is normal. In the theory $\rca_0+\cac$ one can show that $W_Z$ (or rather its coded restriction) is a WPO-dilator whenever $Z$ is a well partial order, as discussed in Example~\ref{ex:coded-class-iso}. According to Example~\ref{ex:higman}, the Kruskal fixed point $\T W_Z$ is isomorphic to the order~$\seq(Z)$ of finite sequences with entries in~$Z$. Hence the assumption of the present result implies Higman's lemma. The latter is equivalent to arithmetical comprehension, as shown by Simpson~\cite{simpson-higman} and Girard~\cite{girard87}.
\end{proof}

\section{From the uniform Kruskal theorem to $\Pi^1_1$-comprehension}\label{sect:extend-dilator}

In this section we deduce $\Pi^1_1$-comprehension from the assumption that $\T W$ is a wpo whenever~$W$ is a WPO-dilator. Due to a result of Freund~\cite{freund-computable}, it suffices to establish that $\vartheta(D)$ is well founded for any given WO-dilator~$D$. For this purpose we construct a normal PO-dilator~$W_D$ and a quasi embedding~$\nu:D\Rightarrow W_D$. Our main technical result shows that $W_D$ preserves wpos. By the uniform Kruskal theorem we can conclude that $\T W_D$ is a well partial order. Then the quasi embedding $\vartheta(D)\to\T W_D$ from Theorem~\ref{thm:quasi-embedding} witnesses that $\vartheta(D)$ is well founded, as required.

To construct the aforementioned quasi embedding $\nu:D\Rightarrow W_D$, we will need to assume that $D$ satisfies a monotonicity property, which is due to Girard~\cite{girard-pi2}:

\begin{definition}[$\rca_0$]\label{def:monotone-dilators}
A coded LO-dilator~$D$ is called monotone if we have
\begin{equation*}
\text{$f(i)\leq g(i)$ for all~$i<m$}\quad\Rightarrow\quad\text{$D(f)(\sigma)\leq_{D(n)} D(g)(\sigma)$ for all~$\sigma\in D(m)$},
\end{equation*}
for all strictly increasing functions $f,g:m=\{0,\dots,m-1\}\to\{0,\dots,n-1\}=n$.
\end{definition}

Let us verify that the given property extends to infinite orders:

\begin{lemma}[$\rca_0$]
If $D$ is a monotone LO-dilator, then the following holds for all embeddings $f,g:X\to Y$ of linear orders: If $f(x)\leq_Y g(x)$ holds for all $x\in X$, then $\overline D(f)(\sigma)\leq_{\overline D(Y)}\overline D(g)(\sigma)$ holds for all~$\sigma\in\overline D(X)$.
\end{lemma}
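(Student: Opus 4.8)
The plan is to unwind the definition of $\overline D$ on morphisms and reduce the claim to the finite monotonicity condition of Definition~\ref{def:monotone-dilators}. First I would fix an element of $\overline D(X)$; by the definition of $\overline D$ it has the form $(a,\sigma)$ with $a\in[X]^{<\omega}$ and $(|a|,\sigma)\in\trace(D)$. Since $X$ and $Y$ are linear and $f,g$ are embeddings, the images $a':=[f]^{<\omega}(a)$ and $a'':=[g]^{<\omega}(a)$ are finite suborders of $Y$ order-isomorphic to $a$, so $|a'|=|a''|=|a|$; moreover, exactly as in the proof of Lemma~\ref{lem:quasi-embedding-extend}, the functions $|f\restriction a|$ and $|g\restriction a|$ are both the identity on $|a|$. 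Hence $\overline D(f)((a,\sigma))=(a',\sigma)$ and $\overline D(g)((a,\sigma))=(a'',\sigma)$, and it remains to show $(a',\sigma)\leq_{\overline D(Y)}(a'',\sigma)$.

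Put $c:=a'\cup a''\subseteq Y$. By the definition of the order on $\overline D(Y)$ (the linear analogue of Definition~\ref{def:extend-po-dilator}; see \cite[Definition~2.2]{freund-computable}), the inequality to be proved is equivalent to $D(|\iota_{a'}^{c}|)(\sigma)\leq_{D(|c|)}D(|\iota_{a''}^{c}|)(\sigma)$, where $|\iota_{a'}^{c}|,|\iota_{a''}^{c}|\colon|a|\to|c|$ are strictly increasing functions between $|a|=\{0,\dots,|a|-1\}$ and $|c|=\{0,\dots,|c|-1\}$. By Definition~\ref{def:monotone-dilators} it suffices to check that $|\iota_{a'}^{c}|(i)\leq|\iota_{a''}^{c}|(i)$ holds for every $i<|a|$.

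To see this, recall that $\en_a\colon|a|\to a$, $\en_{a'}\colon|a|\to a'$, $\en_{a''}\colon|a|\to a''$ and $\en_c\colon|c|\to c$ are the increasing enumerations, uniquely determined since all these orders are linear. As in the proof of Lemma~\ref{lem:quasi-embedding-extend}, an embedding of linear orders carries increasing enumerations to increasing enumerations, so $\en_{a'}=(f\restriction a)\circ\en_a$ and $\en_{a''}=(g\restriction a)\circ\en_a$; combined with the characterizing equation $\en_c\circ|\iota_{a'}^{c}|=\iota_{a'}^{c}\circ\en_{a'}$, this gives $|\iota_{a'}^{c}|(i)=\en_c^{-1}(f(\en_a(i)))$, the rank of $f(\en_a(i))$ inside $c$, and likewise $|\iota_{a''}^{c}|(i)=\en_c^{-1}(g(\en_a(i)))$. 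Since $\en_a(i)\in X$, the hypothesis yields $f(\en_a(i))\leq_Y g(\en_a(i))$, and since $\en_c^{-1}$ is order preserving we conclude $|\iota_{a'}^{c}|(i)\leq|\iota_{a''}^{c}|(i)$, completing the argument. The only delicate point is the bookkeeping with the canonical representatives $|\cdot|$ and the enumerations $\en$ — verifying that $|f\restriction a|$, $|g\restriction a|$ are truly the identity and that $|\iota_{a'}^{c}|$, $|\iota_{a''}^{c}|$ compute the stated ranks — but these are precisely the kind of computations already performed in the proof of Lemma~\ref{lem:quasi-embedding-extend}, and everything is visibly available in $\rca_0$.
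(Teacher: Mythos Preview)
Your proof is correct and follows essentially the same route as the paper's: write $\sigma=(a,\sigma_0)$, reduce the inequality in $\overline D(Y)$ to a finite comparison $D(|\iota_{a'}^{c}|)(\sigma_0)\leq_{D(|c|)}D(|\iota_{a''}^{c}|)(\sigma_0)$, invoke monotonicity to reduce further to the pointwise condition $|\iota_{a'}^{c}|(i)\leq|\iota_{a''}^{c}|(i)$, and verify that condition via the enumerations and the hypothesis $f(\en_a(i))\leq_Y g(\en_a(i))$. You make the intermediate step that $|f\!\restriction\!a|$ and $|g\!\restriction\!a|$ are the identity more explicit than the paper does, but the argument is the same.
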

\begin{proof}
A given element~$\sigma\in\overline D(X)$ can be written as $\sigma=(a,\sigma_0)$ for $a\in[X]^{<\omega}$ and $\sigma_0\in D(|a|)$ with $\supp_{|a|}(\sigma_0)=|a|$. In the following we consider the inclusions
\begin{equation*}
\iota_f:=\iota_{[f]^{<\omega}(a)}^{[f]^{<\omega}(a)\cup [g]^{<\omega}(a)}\quad\text{and}\quad \iota_g:=\iota_{[g]^{<\omega}(a)}^{[f]^{<\omega}(a)\cup [g]^{<\omega}(a)}.
\end{equation*}
By~\cite[Definition~2.2]{freund-computable}, the desired inequality $\overline D(f)(\sigma)\leq_{\overline D(Y)}\overline D(g)(\sigma)$ amounts to
\begin{equation*}
D(|\iota_f|)(\sigma_0)\leq_{D(|[f]^{<\omega}(a)\cup [g]^{<\omega}(a)|)} D(|\iota_g|)(\sigma_0).
\end{equation*}
Due to the implication in Definition~\ref{def:monotone-dilators}, this inequality reduces to the claim that
\begin{equation*}
|\iota_f|(i)\leq |\iota_g|(i)\quad\text{holds for all $i<|[f]^{<\omega}(a)|=|a|=|[g]^{<\omega}(a)|$}.
\end{equation*}
Since the increasing enumeration of $[f]^{<\omega}(a)\subseteq Y$ is unique, we have
\begin{equation*}
\en_{[f]^{<\omega}(a)\cup [g]^{<\omega}(a)}\circ |\iota_f|=\iota_f\circ\en_{[f]^{<\omega}(a)}=\iota_f\circ(f\!\restriction\! a)\circ\en_a,
\end{equation*}
and an analogous equation holds for~$g$. The required inequality $|\iota_f|(i)\leq |\iota_g|(i)$ is thus equivalent to $f(\en_a(i))\leq_Y g(\en_a(i))$, which holds by assumption.
\end{proof}

As shown by Girard~\cite[Proposition~2.3.10]{girard-pi2}, monotonicity is automatic in the well founded case. We translate Girard's proof into our terminology, for the reader's convenience and to ensure that the proof can be formalized in reverse mathematics.

\begin{lemma}[$\aca_0$]\label{lem:WO-dilator-monotone}
Any WO-dilator is monotone.
\end{lemma}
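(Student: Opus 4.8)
The plan is to isolate a ``unit step'' case and prove it by manufacturing an infinite descending sequence inside $\overline{D}(L)$ for a suitable well order $L$, the general case following by transitivity. So let $D$ be a WO-dilator. I would first record the elementary combinatorial fact that, given strictly increasing $f,g\colon m\to n$ with $f(i)\le g(i)$ for all $i<m$, one can pass from $f$ to $g$ by finitely many \emph{unit steps}: while the current strictly increasing function $h\colon m\to n$ still differs from $g$, let $i^*$ be the largest index with $h(i^*)<g(i^*)$ and increase $h(i^*)$ by one. One checks the result is again strictly increasing into $n$ and still pointwise $\le g$ --- here one uses $h(i^*)<g(i^*)<g(i^*+1)=h(i^*+1)$ when $i^*<m-1$ --- and the recursion terminates because $\sum_i h(i)$ strictly increases. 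Hence, by transitivity of $\le_{D(n)}$ along the resulting chain $f=h_0,h_1,\dots,h_N=g$, it suffices to show: whenever $h,h'\colon m\to n$ are strictly increasing and agree except that $h'(i_0)=h(i_0)+1$ for a single index $i_0$ (so that $p:=h(i_0)$ satisfies $p+1<h(i_0+1)$ if $i_0<m-1$ and $p+1\le n-1$ otherwise, whence $p\in h(m)$ but $p+1\notin h(m)$), then $D(h)(\sigma)\le_{D(n)}D(h')(\sigma)$ for every $\sigma\in D(m)$.

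To prove this unit step I would argue by contradiction, assuming $D(h')(\sigma)<_{D(n)}D(h)(\sigma)$. Put $q:=h(i_0+1)$ when $i_0<m-1$ and $q:=n$ otherwise, and let $L$ be obtained from $n=\{0,\dots,n-1\}$ by deleting the block $\{p+1,\dots,q-1\}$ and inserting in its place a fresh copy $\mu_0<\mu_1<\mu_2<\cdots$ of $\omega$; thus $L=\{0,\dots,p\}+\omega+\{q,\dots,n-1\}$ is a computable well order, equipped with the evident embedding $\iota\colon n\to L$ that is the identity on $\{0,\dots,p\}$ and on $\{q,\dots,n-1\}$ and sends $p+r$ to $\mu_{r-1}$ for $1\le r\le q-p-1$. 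I would then define an order embedding $\rho\colon L\to L$ by taking $\rho$ to be the identity on $\{0,\dots,p-1\}\cup\{q,\dots,n-1\}$, with $\rho(p):=\mu_0$ and $\rho(\mu_r):=\mu_{r+1}$; strict monotonicity is immediate, the only point worth noting being that $\rho$ never reaches $q$ on the inserted block, as $\omega$ has no greatest element. The purpose of this choice is that $\rho$ fixes $\iota(h(i))$ for every $i\ne i_0$ --- those values all lie in $\{0,\dots,p-1\}\cup\{q,\dots,n-1\}$ --- whereas $\rho(\iota(h(i_0)))=\rho(p)=\mu_0=\iota(p+1)=\iota(h'(i_0))$; hence $\rho\circ\iota\circ h=\iota\circ h'$ as morphisms $m\to L$.

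Finally I would iterate. Applying the embedding $\overline{D}(\iota)$ to $D(h')(\sigma)<_{D(n)}D(h)(\sigma)$ (identifying $D$ with its extension $\overline{D}$ on finite orders) gives $\tau<_{\overline{D}(L)}\tau'$, where $\tau:=\overline{D}(\iota\circ h')(\sigma)$ and $\tau':=\overline{D}(\iota\circ h)(\sigma)$, while functoriality together with $\rho\circ\iota\circ h=\iota\circ h'$ yields $\overline{D}(\rho)(\tau')=\tau$. Since $\rho$ is an embedding, so is $\overline{D}(\rho)\colon\overline{D}(L)\to\overline{D}(L)$, and applying it to $\tau<_{\overline{D}(L)}\tau'$ gives $\overline{D}(\rho)(\tau)<_{\overline{D}(L)}\overline{D}(\rho)(\tau')=\tau$. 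Repeating, $\tau>_{\overline{D}(L)}\overline{D}(\rho)(\tau)>_{\overline{D}(L)}\overline{D}(\rho^2)(\tau)>_{\overline{D}(L)}\cdots$ is an infinite strictly descending sequence in $\overline{D}(L)$, contradicting that $\overline{D}(L)$ is a well order (as $L$ is one and $D$ is a WO-dilator). I expect the construction of $L$ and $\rho$ to be the crux: the transported inequality is ``witnessed'' on the point $p=h(i_0)$ in the larger term and on $p+1$ in the smaller one, so shifting that witness one notch to the right uses up a slot, and doing it inside $n$ --- or even inside $\omega$ --- would leave room for only finitely many iterations; inserting an entire copy of $\omega$ between $p$ and $q$ is precisely what makes the single self-embedding $\rho$ serve all iterations simultaneously.
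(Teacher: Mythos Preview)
Your argument is correct, and it takes a genuinely different route from the paper's. The paper (following Girard) works with a single ambient well order $\omega^\omega$: it embeds $n$ via $h(k)=\omega^k$ and builds, for arbitrary $f\le g$, a self-embedding $h'\colon\omega^\omega\to\omega^\omega$ with $h\circ g=h'\circ h\circ f$; well-foundedness of $\overline D(\omega^\omega)$ then forces $\tau\le_{\overline D(\omega^\omega)}\overline D(h')(\tau)$ for every $\tau$, from which $D(f)(\sigma)\le D(g)(\sigma)$ drops out in one step. You instead reduce to unit steps and, for each step, tailor a much simpler well order $L=\text{finite}+\omega+\text{finite}$ with a shift $\rho$ satisfying $\rho\circ\iota\circ h=\iota\circ h'$, then derive a descending sequence in $\overline D(L)$ from a hypothetical strict inequality. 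Both proofs rest on the same underlying observation---a self-embedding of a well order cannot move any point strictly down under $\overline D$---but they package it differently: the paper absorbs the combinatorics of $f\le g$ into the construction of $h'$ on $\omega^\omega$, while you absorb it into the unit-step reduction and keep the ambient order minimal. A pleasant by-product of your choice is that the well-foundedness of $L$ is provable already in $\rca_0$, whereas the paper's use of $\omega^\omega$ is precisely where $\aca_0$ enters its argument; so your proof in fact goes through in $\rca_0$, a mild sharpening of the stated base theory.
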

\begin{proof}
Recall that the ordinal $\omega^\omega$ can be represented by the set of finite non-increasing sequences of natural numbers. To suggest the intended interpretation as Cantor normal forms, we write this set as
\begin{equation*}
\omega^\omega=\{\omega^{n_0}+\dots+\omega^{n_{l-1}}\,|\,n_{l-1}\leq\dots\leq n_0<\omega\}.
\end{equation*}
In the appropriate order we have $\omega^{m_0}+\dots+\omega^{m_{k-1}}\preceq\omega^{n_0}+\dots+\omega^{n_{l-1}}$ if there is an $i<\min\{k,l\}$ with $m_i<n_i$ and $m_j=n_j$ for all~$j<i$, or if we have $k\leq l$ and $m_i=n_i$ for all $i<k$. The fact that $(\omega^\omega,\preceq)$ is a well order can be proved in~$\aca_0$ but not in~$\rca_0$ (see e.\,g.~\cite{kreuzer-yokoyama}). If $D$ is a WO-dilator, then~$\overline D(\omega^\omega)$ is well founded. To deduce that $D$ is monotone we consider strictly increasing functions $f,g:m\to n$ with $f(i)\leq g(i)$ for~$i<m$. The point of $\omega^\omega$ is that it admits strictly increasing functions $h:n\to\omega^\omega$ and $h':\omega^\omega\to\omega^\omega$ with $h\circ g=h'\circ h\circ f$. Before we justify this claim, we show how it allows us to conclude: Since $\overline D(\omega^\omega)$ is well founded and $\overline D(h')$ is an embedding, we have \begin{equation*}
\tau\leq_{\overline D(\omega^\omega)} \overline D(h')(\tau)\quad\text{for all $\tau\in\overline D(\omega^\omega)$}.
\end{equation*}
Indeed, if $\tau$ was minimal with $\tau>_{\overline D(\omega^\omega)}\overline D(h')(\tau)=:\tau'$, then we would get
\begin{equation*}
\overline D(h')(\tau)=\tau'\leq_{\overline D(\omega^\omega)}\overline D(h')(\tau')<_{\overline D(\omega^\omega)} \overline D(h')(\tau),
\end{equation*}
which is impossible. To establish normality we now deduce $D(f)(\sigma)\leq_{D(n)}D(g)(\sigma)$ for a given element $\sigma\in D(m)$. The latter can be written as $\sigma=D(\iota_a^m\circ\en_a)(\sigma_0)$ with $a=\supp_m(\sigma)$ and $\sigma_0\in D(|a|)$, due to the support condition in~\cite[Definition~2.1]{freund-computable}. As in the proof of Theorem~\ref{thm:reconstruct-class-dilator}, the naturality of supports ensures $\supp_{|a|}(\sigma_0)=|a|$ and hence $(a,\sigma_0)\in\overline D(m)$. By the above we get
\begin{equation*}
\overline D(h\circ f)((a,\sigma_0))\leq_{\overline D(\omega^\omega)}\overline D(h')\circ\overline D(h\circ f)((a,\sigma_0))=\overline D(h\circ g)((a,\sigma_0)),
\end{equation*}
which implies $\overline D(f)((a,\sigma_0))\leq_{\overline D(n)}\overline D(g)((a,\sigma_0))$. The latter is equivalent to
\begin{equation*}
D(|\iota^f|)(\sigma_0)\leq_{D(n)}D(|\iota^g|)(\sigma_0),
\end{equation*}
where $\iota^f:[f]^{<\omega}(a)\hookrightarrow n$ and $\iota^g:[g]^{<\omega}(a)\hookrightarrow n$ are the inclusions (note that we have $\iota^f=\iota\circ\iota_f$ for $\iota:[f]^{<\omega}(a)\cup[g]^{<\omega}(a)\hookrightarrow n$ and $\iota_f$ as in the previous proof). Since~$\en_n$ is the identity on $|n|=n=\{0,\dots,n-1\}$, we have
\begin{equation*}
|\iota^f|=\en_n\circ |\iota^f|=\iota^f\circ\en_{[f]^{<\omega}(a)}=\iota^f\circ(f\!\restriction\!a)\circ\en_a=f\circ\iota_a^m\circ\en_a,
\end{equation*}
as well as $|\iota^g|=g\circ\iota_a^m\circ\en_a$. Hence the above implies
\begin{multline*}
D(f)(\sigma)=D(f)\circ D(\iota_a^m\circ\en_a)(\sigma_0)=D(|\iota^f|)(\sigma_0)\leq_{D(n)}\\ \leq_{D(n)} D(|\iota^g|)(\sigma_0)=D(g)\circ D(\iota_a^m\circ\en_a)(\sigma_0)=D(g)(\sigma),
\end{multline*}
just as required. It remains to construct embeddings $h:n\to\omega^\omega$ and $h':\omega^\omega\to\omega^\omega$ such that $h\circ g(i)=h'\circ h\circ f(i)$ holds for all~$i<m$. We only consider the non-trivial case of~$m>0$. Recall that addition on $\omega^\omega$ can be represented by
\begin{equation*}
(\omega^{m_0}+\dots+\omega^{m_{k-1}})+(\omega^{n_0}+\dots+\omega^{n_{l-1}})=\omega^{m_0}+\dots+\omega^{m_{i-1}}+\omega^{n_0}+\dots+\omega^{n_{l-1}},
\end{equation*}
where $i$ is minimal with $m_i<n_0$ (take $i=k$ in case $m_{k-1}\geq n_0$ or $k=0$ or $l=0$). Basic facts of ordinal arithmetic are readily verified. For $\omega^{f(0)}\preceq\alpha\in\omega^\omega$ we now define $e(\alpha)\in\mathbb N$ and $r(\alpha)\in\omega^\omega$ by stipulating
\begin{equation*}
e(\alpha)=\max\{i<m\,|\,\omega^{f(i)}\preceq\alpha\}\quad\text{and}\quad \alpha=\omega^{f\circ e(\alpha)}+r(\alpha).
\end{equation*}
The desired functions $h:n\to\omega^\omega$ and $h':\omega^\omega\to\omega^\omega$ can then be defined by
\begin{equation*}
h(k)=\omega^k\quad\text{and}\quad h'(\alpha)=\begin{cases}
\omega^{g\circ e(\alpha)}+r(\alpha) &\text{if $\omega^{f(0)}\preceq\alpha$},\\
\alpha & \text{otherwise}.
\end{cases}
\end{equation*}
For $i<m$ we have $h\circ f(i)=\omega^{f(i)}$, which yields $e(h\circ f(i))=i$ and $r(h\circ f(i))=0$. Hence we get
\begin{equation*}
h'\circ h\circ f(i)=\omega^{g\circ e(h\circ f(i))}+r(h\circ f(i))=\omega^{g(i)}=h\circ g(i),
\end{equation*}
as desired. To show that $h'$ is strictly increasing, we first observe that $\alpha\prec\omega^{f(0)}\preceq\beta$ implies $h'(\alpha)=\alpha\prec\beta\preceq h'(\beta)$, where the last inequality relies on~$f\circ e(\beta)\leq g\circ e(\beta)$. For $\omega^{f(0)}\preceq\alpha\prec\beta$ we clearly have $e(\alpha)\leq e(\beta)$. If we have $e(\alpha)=e(\beta)$, then we get $r(\alpha)\prec r(\beta)$ and thus $h'(\alpha)\prec h'(\beta)$. Now assume that we have $e(\alpha)<e(\beta)$. By the maximality of $e(\alpha)$ we then get
\begin{equation*}
r(\alpha)\preceq\alpha\prec\omega^{f\circ e(\beta)}\preceq\omega^{g\circ e(\beta)}.
\end{equation*}
Since $\omega^{g\circ e(\beta)}\succ\omega^{g\circ e(\alpha)}$ is additively principal, we obtain
\begin{equation*}
h(\alpha)=\omega^{g\circ e(\alpha)}+r(\alpha)\prec\omega^{g\circ e(\beta)}\preceq\omega^{g\circ e(\beta)}+r(\beta)=h'(\beta),
\end{equation*}
as needed to show that $h'$ is strictly increasing.
\end{proof}

Our next goal is to extend an LO-dilator into a WO-dilator. Let us begin with some terminology: Given a partial order~$X$, we write $\emb(X)$ for the set of finite quasi embeddings $u:n\to X$, where $n=\{0,\dots,n-1\}$ carries the usual linear order. In this context we write $[u]=n$ for the domain of~$u$. For $u,w\in\emb(X)$ we define $\hig(u,w)$ as the set of strictly increasing functions $h:[u]\to[w]$ such that $u(i)\leq_X w\circ h(i)$ holds for all~$i<[u]$ (note the connection with Higman's lemma). In $\rca_0$ one should represent~$\emb(X)$ by the set of pairs $(a,u_0)$, where $a\subseteq X$ is a finite suborder and $u_0:n\to a$ is a surjective quasi embedding. Since $(a,u_0)$ corresponds to an obvious~$u:n\to X$, we will not make this representation explicit. The following definition can be made for any LO-dilator~$D$. However, we will need to assume that~$D$ is monotone to construct a quasi embedding $\nu:D\Rightarrow W_D$.

\begin{definition}[$\rca_0$]\label{def:extend-lo-dilators}
Let $D$ be an LO-dilator. For each partial order~$X$ we define a set $W_D(X)$ and a relation $\leq_{W_D(X)}$ by stipulating
\begin{gather*}
W_D(X)=\{(u,\sigma)\,|\,u\in\emb(X)\text{ and }([u],\sigma)\in\trace(D)\},\\
(u,\sigma)\leq_{W_D(X)}(w,\tau)\,\Leftrightarrow\,\text{there is an $h\in\hig(u,w)$ with $D(h)(\sigma)\leq_{D([w])}\tau$}.
\end{gather*}
Given a quasi embedding $f:X\to Y$, we define $W_D(f):W_D(X)\to W_D(Y)$ by
\begin{equation*}
W_D(f)((u,\sigma))=(f\circ u,\sigma).
\end{equation*}
Finally, we define functions $\supp^W_X:W_D(X)\to[X]^{<\omega}$ by setting
\begin{equation*}
\supp^W_X((u,\sigma))=\rng(u)=[u]^{<\omega}([u])
\end{equation*}
for each element $(u,\sigma)\in W_D(X)$.
\end{definition}

Let us point out that our order $W_D(X)$ is similar to the order~$Q(X)$ of K\v{r}i\v{z} and Thomas~\cite{kriz-thomas}, if we take $Q$ to be the category with objects~$\trace(D)$ and a suitable set of morphisms. However, the first component of an element $(u,\sigma)\in Q(X)$ can be an arbitrary function $u:[u]\to X$, while we restrict to quasi embeddings.

If the construction from Definition~\ref{def:extend-lo-dilators} is restricted to the category~$\poz$, then it can be represented as a single set, which is available in~$\rca_0$ (cf.~Example~\ref{ex:coded-po-dilator}). The following result is concerned with this set-sized restriction.

\begin{proposition}[$\rca_0$]\label{prop:W_D-PO-dilator}
If $D$ is an LO-dilator, then~$W_D$ is a normal PO-dilator.
\end{proposition}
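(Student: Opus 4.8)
The plan is to verify, one by one, the four requirements for a coded normal PO-dilator: that each $W_D(a)$ for $a\in\poz$ is a partial order; that $W_D$ is a functor $\poz\to\po$ preserving embeddings; that the maps $\supp^W_a$ form a natural transformation satisfying the support condition; and that $W_D$ is normal. Throughout one works with the set-sized restriction of Definition~\ref{def:extend-lo-dilators} to~$\poz$, which is available in~$\rca_0$ as recalled before the statement, and the orders $W_D(a)$ are countable since $\emb(a)$ is finite for finite~$a$ (quasi embeddings being injective) while each $D(n)$ is countable.

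First I would check that $\leq_{W_D(a)}$ is a partial order. Reflexivity of $(u,\sigma)$ is witnessed by $\id_{[u]}\in\hig(u,u)$ together with $D(\id_{[u]})(\sigma)=\sigma$. For transitivity one composes the Higman witnesses: if $h_1\in\hig(u,w)$ and $h_2\in\hig(w,v)$ witness the two inequalities, then $h_2\circ h_1\in\hig(u,v)$ by transitivity in~$a$, and $D(h_2\circ h_1)(\sigma)=D(h_2)(D(h_1)(\sigma))\leq_{D([v])}D(h_2)(\tau)\leq_{D([v])}\rho$, since $D(h_2)$ is an embedding of linear orders and hence order preserving. Antisymmetry is the step that needs a small observation: if $h_1$ witnesses $(u,\sigma)\leq_{W_D(a)}(w,\tau)$ and $h_2$ witnesses $(w,\tau)\leq_{W_D(a)}(u,\sigma)$, then $h_2\circ h_1$ and $h_1\circ h_2$ are strictly increasing self-maps of the finite linear orders $[u]$ and $[w]$, hence the respective identities; this forces $[u]=[w]$ and $h_1=h_2=\id$, so that antisymmetry in~$a$ gives $u(i)=w(i)$ for all~$i$, i.e. $u=w$, and antisymmetry in $D([w])$ gives $\sigma=\tau$.

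Next I would treat functoriality and supports. The value $W_D(f)((u,\sigma))=(f\circ u,\sigma)$ lies in $W_D(b)$ because $f\circ u$ is again a quasi embedding with $[f\circ u]=[u]$, and $W_D(\id)=\id$ together with $W_D(g\circ f)=W_D(g)\circ W_D(f)$ is immediate from associativity of composition. That $W_D(f)$ is a quasi embedding follows by pulling a Higman witness back along~$f$: from $f(u(i))\leq_b f(w(h(i)))$ the quasi-embedding property of~$f$ gives $u(i)\leq_a w(h(i))$, so the same~$h$ witnesses $(u,\sigma)\leq_{W_D(a)}(w,\tau)$; when $f$ is an embedding this computation reverses, so $W_D$ preserves embeddings. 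Naturality of $\supp^W$ amounts to $[f]^{<\omega}(\rng(u))=\rng(f\circ u)$. For the support condition, given an embedding $f:a\to b$ and $(u,\sigma)\in W_D(b)$ with $\rng(u)\subseteq\rng(f)$, injectivity of~$f$ lets us form $u':=f^{-1}\circ u\colon[u]\to a$, which is a quasi embedding because $f$ preserves order; then $(u',\sigma)\in W_D(a)$ and $W_D(f)((u',\sigma))=(u,\sigma)$, while the opposite inclusion is automatic from naturality. Finally, normality is immediate from the definition of $\hig$: if $h\in\hig(u,w)$ witnesses $(u,\sigma)\leq_{W_D(a)}(w,\tau)$, then each $u(i)\in\rng(u)=\supp^W_a((u,\sigma))$ satisfies $u(i)\leq_a w(h(i))\in\rng(w)=\supp^W_a((w,\tau))$, so $\supp^W_a((u,\sigma))\leqf_a\supp^W_a((w,\tau))$.

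I do not expect a genuine obstacle here; the one point that needs care — and which is reused twice — is the rigidity of strictly increasing maps between finite linear orders (a strictly increasing self-map is the identity), which underlies both the antisymmetry argument and, together with injectivity of quasi embeddings, the verification of the support condition.
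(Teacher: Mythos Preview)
Your proposal is correct and follows essentially the same route as the paper's proof: both verify the partial-order axioms via Higman witnesses (using that strictly increasing self-maps of finite linear orders are identities for antisymmetry), check that $W_D(f)$ is a quasi embedding by pulling witnesses back along~$f$ (and an embedding when $f$ is), compute naturality of supports as $\rng(f\circ u)=[f]^{<\omega}(\rng(u))$, establish the support condition by factoring $u=f\circ u'$ through an embedding, and read off normality directly from the defining inequality $u(i)\leq_a w(h(i))$.
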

\begin{proof}
We first show that $W_D(a)$ is a partial order for any~$a\in\poz$. Concerning reflexivity, we observe that $(u,\sigma)\leq_{W_D(a)}(u,\sigma)$ is witnessed by the identity on~$[u]$. In order to establish antisymmetry, we assume that the inequalities
\begin{equation*}
(u,\sigma)\leq_{W_D(a)}(w,\tau)\quad\text{and}\quad(w,\tau)\leq_{W_D(a)}(u,\sigma)
\end{equation*}
are witnessed by $h\in\hig(u,w)$ and $h'\in\hig(w,u)$. As the functions $h:[u]\to[w]$ and $h':[w]\to[u]$ are strictly increasing, they must both be the identity on~$[u]=[w]$. In view of $u(i)\leq_a w\circ h(i)=w(i)$ and $w(i)\leq_a u\circ h'(i)=u(i)$ we get $u=w$. Since~$D$ is a functor, we also obtain $\sigma=D(h)(\sigma)\leq_{D([u])}\tau$ and $\tau=D(h')(\tau)\leq_{D([u])}\sigma$. Hence antisymmetry in $D([u])$ yields $\sigma=\tau$. For transitivity we assume that
\begin{equation*}
(u,\sigma)\leq_{W_D(a)}(w,\tau)\quad\text{and}\quad(w,\tau)\leq_{W_D(a)}(v,\rho)
\end{equation*}
are witnessed by $h\in\hig(u,w)$ and $h'\in\hig(w,v)$. Then we have~$h'\circ h\in\hig(u,v)$. As $D(h'):D([w])\to D([v])$ is an embedding, we also get
\begin{equation*}
D(h'\circ h)(\sigma)=D(h')\circ D(h)(\sigma)\leq_{D([v])}D(h')(\tau)\leq_{D([v])}\rho.
\end{equation*}
Hence $h'\circ h$ witnesses $(u,\sigma)\leq_{W_D(a)}(v,\rho)$. Let us now discuss the action of~$W_D$ on morphisms. Given a quasi embedding $f:a\to b$, we consider an inequality
\begin{equation*}
(f\circ u,\sigma)=W_D(f)((u,\sigma))\leq_{W_D(b)} W_D(f)((w,\tau))=(f\circ w,\tau).
\end{equation*}
Assume that the latter is witnessed by $h\in\hig(f\circ u,f\circ w)$. Recall that $[v]$ denotes the domain of~$v$. Hence we have $[f\circ u]=[u]$ and $[f\circ w]=[w]$. Since~$f$ is a quasi embedding, we see that $f\circ u(i)\leq_b f\circ w\circ h(i)$ implies $u(i)\leq_a w\circ h(i)$. Thus we get~$h\in\hig(u,w)$, and this function witnesses $(u,\sigma)\leq_{W_D(a)}(w,\tau)$. If~$f$ is an embedding, then $h\in\hig(u,w)$ does also imply $h\in\hig(f\circ u,f\circ w)$. One can conclude that $W_D(f)$ is an embedding, as required by condition~(i) of  Definition~\ref{def:coded-po-dilator}. It is straightforward to verify that~$W_D$ is functorial. To see that $\supp^W$ is a natural transformation we consider a quasi embedding $f:a\to b$ and compute
\begin{multline*}
\supp^W_b\circ W_D(f)((u,\sigma))=\supp^W_b((f\circ u,\sigma))=[f\circ u]^{<\omega}([f\circ u])=\\
=[f]^{<\omega}\circ[u]^{<\omega}([u])=[f]^{<\omega}\circ\supp^W_a((u,\sigma)).
\end{multline*}
Let us now establish the support condition: Given an embedding~$f:a\to b$ and an element $(w,\sigma)\in W_D(b)$ with
\begin{equation*}
\rng(w)=\supp^W_b((w,\sigma))\subseteq\rng(f),
\end{equation*}
we find a function $u:[u]=[w]\to a$ with $f\circ u=w$. In order to show that $u$ is a quasi embedding, we consider an inequality $u(i)\leq_a u(j)$ with $i,j<[u]$. Since~$f$ is an embedding, we obtain $w(i)=f\circ u(i)\leq_b f\circ u(j)=w(j)$ and then $i\leq j$. Due to $u\in\emb(a)$ we now get $(u,\sigma)\in W_D(a)$. By construction we have
\begin{equation*}
(w,u)=(f\circ u,\sigma)=W_D(f)((u,\sigma))\in\rng(W_D(f)),
\end{equation*}
as required. Finally, we establish the normality condition from Definition~\ref{def:PO-dilator-normal}: Consider an inequality $(u,\sigma)\leq_{W_D(a)}(w,\tau)$ that is witnessed by $h\in\hig(u,w)$. An arbitrary element of $\supp^W_a((u,\sigma))$ has the form $u(i)$ with $i<[u]$. We have
\begin{equation*}
u(i)\leq_a w\circ h(i)\in\rng(w)=\supp^W_a((w,\tau)).
\end{equation*}
Hence~$h$ ensures $\supp^W_a((u,\sigma))\leqf_a\supp^W_a((w,\tau))$, as required.
\end{proof}

As promised, monotonicity allows us to view $W_D$ as an extension of~$D$:

\begin{proposition}[$\rca_0$]\label{prop:D-into-W_D}
Assume that $D$ is a monotone LO-dilator. Then there is a quasi embedding $\nu:D\Rightarrow W_D$.
\end{proposition}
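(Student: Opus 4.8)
The plan is to send each $\sigma\in D(n)$ (for $n\in\loz$) to the pair that records how $\sigma$ depends on its support. Concretely, I would first set $a:=\supp^D_n(\sigma)\in[n]^{<\omega}$ and consider $u_\sigma:=\iota_a^n\circ\en_a\colon|a|\to n$. Since $a$ is a finite subset of the linear order~$n$, the enumeration $\en_a$ is just the increasing enumeration of~$a$, so $u_\sigma$ is a strictly increasing function, i.e.\ $u_\sigma\in\emb(n)$ with $[u_\sigma]=|a|$, and moreover $u_\sigma$ is a morphism in~$\loz$. The support condition for the coded LO-dilator~$D$, together with the naturality of supports, then yields a unique $\sigma_0\in D(|a|)$ with $\sigma=D(u_\sigma)(\sigma_0)$ and $\supp^D_{|a|}(\sigma_0)=|a|$, exactly as in the proof of Theorem~\ref{thm:reconstruct-class-dilator}. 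In particular $([u_\sigma],\sigma_0)\in\trace(D)$, so $(u_\sigma,\sigma_0)$ is a genuine element of $W_D(n)$, and I would define $\nu_n(\sigma):=(u_\sigma,\sigma_0)$.

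Next I would check that the family $\nu=(\nu_n)_{n\in\loz}$ is natural. Given a strictly increasing $f\colon m\to n$ and $\sigma\in D(m)$ with support~$a$, naturality of supports gives $\supp^D_n(D(f)(\sigma))=[f]^{<\omega}(a)$; since $f$ restricts to an order isomorphism $a\to[f]^{<\omega}(a)$ of finite linear orders, the canonical representatives coincide, $|[f]^{<\omega}(a)|=|a|$, and $\en_{[f]^{<\omega}(a)}=(f\restriction a)\circ\en_a$. Hence $f\circ u_\sigma$ is the increasing enumeration of $[f]^{<\omega}(a)$ inside~$n$, so $u_{D(f)(\sigma)}=f\circ u_\sigma$; and from $D(f)(\sigma)=D(f\circ u_\sigma)(\sigma_0)=D(u_{D(f)(\sigma)})(\sigma_0)$ together with the injectivity of $D(u_{D(f)(\sigma)})$ the second component of $\nu_n(D(f)(\sigma))$ is again~$\sigma_0$. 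Therefore $W_D(f)(\nu_m(\sigma))=(f\circ u_\sigma,\sigma_0)=\nu_n(D(f)(\sigma))$, which is naturality. This part uses only the support condition and is pure bookkeeping with canonical representatives.

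The hard part, and the only place where monotonicity of~$D$ is needed, is showing that each $\nu_n$ is a quasi embedding. Suppose $(u_\sigma,\sigma_0)=\nu_n(\sigma)\leq_{W_D(n)}\nu_n(\tau)=(u_\tau,\tau_0)$, witnessed by some $h\in\hig(u_\sigma,u_\tau)$ with $D(h)(\sigma_0)\leq_{D([u_\tau])}\tau_0$. The key observation is that $u_\sigma$ and $u_\tau\circ h$ are both strictly increasing maps $|a|\to n$ and that $h\in\hig(u_\sigma,u_\tau)$ says precisely $u_\sigma(i)\leq u_\tau(h(i))$ for all $i<|a|$, read off in the linear order~$n$. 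Monotonicity (Definition~\ref{def:monotone-dilators}) then gives $\sigma=D(u_\sigma)(\sigma_0)\leq_{D(n)}D(u_\tau\circ h)(\sigma_0)=D(u_\tau)(D(h)(\sigma_0))$; and since $u_\tau$ is an embedding of linear orders, $D(u_\tau)$ is order preserving, so applying it to $D(h)(\sigma_0)\leq_{D([u_\tau])}\tau_0$ yields $D(u_\tau)(D(h)(\sigma_0))\leq_{D(n)}D(u_\tau)(\tau_0)=\tau$. Transitivity gives $\sigma\leq_{D(n)}\tau$, as required. Everything here is finitary, so the argument goes through in~$\rca_0$, and I do not expect any obstacle beyond this monotonicity step.
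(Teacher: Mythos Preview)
Your proposal is correct and follows essentially the same approach as the paper: the same definition $\nu_n(\sigma)=(\iota_a^n\circ\en_a,\sigma_0)$ via the support condition, the same naturality argument via $\en_{[f]^{<\omega}(a)}=(f\!\restriction\!a)\circ\en_a$, and the same use of monotonicity to pass from $u_\sigma(i)\leq u_\tau\circ h(i)$ to $\sigma\leq_{D(n)}\tau$. The only cosmetic difference is that you split the final chain of inequalities into two steps (monotonicity, then order preservation of $D(u_\tau)$), whereas the paper writes it as a single displayed line.
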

\begin{proof}
In view of Definition~\ref{def:quasi-embedding-dilators} we need to define a natural family of quasi embeddings $\nu_m:D(m)\to W_D(m)$ for the linear orders~$m=\{0,\dots,m-1\}$. Due to the support condition from~\cite[Definition~2.1]{freund-computable}, each element $\sigma\in D(m)$ can be written as $\sigma=D(\iota_a^m\circ\en_a)(\sigma_0)$ with $a=\supp^D_m(\sigma)$ and $\sigma_0\in D(|a|)$, where $\iota_a^m:a\hookrightarrow m$ is the inclusion and $\en_a:|a|=\{0,\dots,|a|-1\}\to a\subseteq m$ is the strictly increasing enumeration. Note that $\sigma_0$ is unique since $D(\iota_a^m\circ\en_a)$ is an embedding. Due to
\begin{equation*}
a=\supp^D_m(D(\iota_a^m\circ\en_a)(\sigma_0))=[\iota_a^m\circ\en_a]^{<\omega}(\supp^D_{|a|}(\sigma_0))
\end{equation*}
we get $\supp^D_{|a|}(\sigma_0)=|a|$ and hence $(|a|,\sigma_0)\in\trace(D)$. Also note that we can view $\iota_a^m\circ\en_a$ as an element of $\emb(m)$, with $[\iota_a^m\circ\en_a]=|a|$. Thus we may set
\begin{equation*}
\nu_m(\sigma)=(\iota_a^m\circ\en_a,\sigma_0)\in W_D(m)\quad\text{for $\sigma=D(\iota_a^m\circ\en_a)(\sigma_0)$ with $a=\supp^D_m(\sigma)$}.
\end{equation*}
To show that $\nu_m$ is a quasi embedding we consider an inequality
\begin{equation*}
(\iota_a^m\circ\en_a,\sigma_0)=\nu_m(\sigma)\leq_{W_D(m)}\nu_m(\tau)=(\iota_b^m\circ\en_b,\tau_0)
\end{equation*}
with $\sigma=D(\iota_a^m\circ\en_a)(\sigma_0)$ and $\tau=D(\iota_b^m\circ\en_b)(\tau_0)$. According to Definition~\ref{def:extend-lo-dilators} there is a function $h\in\hig(\iota_a^m\circ\en_a,\iota_b^m\circ\en_b)$ with $D(h)(\sigma_0)\leq_{D(|b|)}\tau_0$. Now we use the assumption that~$D$ is monotone. Since we have $\iota_a^m\circ\en_a(i)\leq\iota_b^m\circ\en_b\circ h(i)$ for all $i<|a|$, it allows us to conclude
\begin{equation*}
\sigma=D(\iota_a^m\circ\en_a)(\sigma_0)\leq_{D(m)}D(\iota_b^m\circ\en_b)\circ D(h)(\sigma_0)\leq_{D(m)}D(\iota_b^m\circ\en_b)(\tau_0)=\tau.
\end{equation*}
To show that $\nu$ is natural we consider a strictly increasing function~$f:m\to n$ and an element $\sigma\in D(m)$. As above, we write $\sigma=D(\iota_a^m\circ\en_a)(\sigma_0)$ with $a=\supp^D_m(\sigma)$. Since $(f\!\restriction\!a)\circ\en_a:|a|\to [f]^{<\omega}(a)=:b$ is strictly increasing and surjective, we get
\begin{equation*}
f\circ\iota_a^m\circ\en_a=\iota_b^n\circ(f\!\restriction\!a)\circ\en_a=\iota_b^n\circ\en_b.
\end{equation*}
Hence we have $D(f)(\sigma)=D(\iota_b^n\circ\en_b)(\sigma_0)$, as well as
\begin{equation*}
\supp^D_n(D(f)(\sigma))=[f]^{<\omega}(\supp^D_m(\sigma))=b.
\end{equation*}
Invoking the definition of~$\nu$, we can infer
\begin{equation*}
\nu_n\circ D(f)(\sigma)=(\iota_b^n\circ\en_b,\sigma_0)=(f\circ\iota_a^m\circ\en_a,\sigma_0)=W_D(f)\circ\nu_m(\sigma),
\end{equation*}
as required for naturality.
\end{proof}

In the following we prove a main technical result of our paper, which states that $W_D$ preserves well partial orders whenever $D$ preserves well orders. To begin, we note that $\emb(X)$ can be seen as a subset of the set of finite sequences with entries from the partial order~$X$. The order from Higman's lemma can be given as
\begin{equation*}
u\leq_H w\quad:\Leftrightarrow\quad\hig(u,w)\neq\emptyset
\end{equation*}
for $u,w\in\emb(X)$. Assuming $u\leq_H w$, we construct a minimal $h[u,w]\in\hig(u,w)$ by recursion: For $i<[u]$ we define $h[u,w](i)$ as the smallest $j>h[u,w](i-1)$ with $u(i)\leq_X w(j)$ (read $h[u,w](-1)=-1$ to cover the case $i=0$). A straightforward induction on~$i$ shows that we have
\begin{equation*}
h[u,w](i)\leq h(i)\quad\text{for any $h\in\hig(u,w)$ and any $i<[u]$}.
\end{equation*}
The following result suggests a strategy to prove that $W_D(X)$ is a well partial order.

\begin{proposition}[$\rca_0$]\label{prop:extension-wpo}
Consider a WO-dilator~$D$ and a partial order~$X$, as well as an infinite sequence
\begin{equation*}
(u_0,\sigma_0),(u_1,\sigma_1),\ldots\subseteq W_D(X)\quad\text{ with }\quad u_0\leq_H u_1\leq_H\ldots.
\end{equation*}
Assume that there is a wpo~$Z$ and a family of quasi embeddings $u^i:[u_i]\to Z$ with $u^j\circ h[u_i,u_j]=u^i$ for all~$i<j$. Then there are $i<j$ with $(u_i,\sigma_i)\leq_{W_D(X)}(u_j,\sigma_j)$.
\end{proposition}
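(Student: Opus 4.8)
The plan is to linearise the given wpo~$Z$ in a way compatible with the maps~$u^i$, and then to exploit that $D$ preserves \emph{well} orders. Since $u_i\leq_H u_j$ for all $i<j$ (by transitivity of~$\leq_H$), the minimal witnesses $h[u_i,u_j]\in\hig(u_i,u_j)$ are defined, and by Definition~\ref{def:extend-lo-dilators} it suffices to produce indices $i<j$ with $D(h[u_i,u_j])(\sigma_i)\leq_{D([u_j])}\sigma_j$, since this is precisely the relation $(u_i,\sigma_i)\leq_{W_D(X)}(u_j,\sigma_j)$.

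First I would observe that the hypothesis $u^j\circ h[u_i,u_j]=u^i$ forces the cocycle condition $h[u_j,u_k]\circ h[u_i,u_j]=h[u_i,u_k]$ for $i<j<k$: both sides are maps into $[u_k]$ that agree after composition with the injective map~$u^k$, because $u^k\circ h[u_i,u_k]=u^i=u^j\circ h[u_i,u_j]=u^k\circ h[u_j,u_k]\circ h[u_i,u_j]$. Together with $h[u_i,u_i]=\id$, this exhibits the $[u_i]$ as a direct system along strictly increasing maps; let $L$ be its direct limit, with canonical embeddings $\lambda_i\colon[u_i]\to L$ satisfying $\lambda_j\circ h[u_i,u_j]=\lambda_i$. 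The compatible family $(u^i)_i$ induces a quasi embedding $u^\infty\colon L\to Z$, since on a common $[u_k]$ it restricts to the quasi embedding $u^k$. As $Z$ is a wpo, any infinite $<_L$-descending sequence would map under $u^\infty$ to a bad sequence in~$Z$; hence $L$ has no such sequence and, being linear, is a well order.

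Now $D$ is a WO-dilator, so $\overline D(L)$ is a well order, in particular a wpo. For each~$i$ the image $\lambda_i([u_i])\in[L]^{<\omega}$ is a linear suborder with canonical representative $|\lambda_i([u_i])|=[u_i]$ and enumeration $\en_{\lambda_i([u_i])}=\lambda_i$; since $([u_i],\sigma_i)\in\trace(D)$ we obtain $(\lambda_i([u_i]),\sigma_i)\in\overline D(L)$. Applying well partial orderedness of $\overline D(L)$ to the infinite sequence $(\lambda_i([u_i]),\sigma_i)_{i}$ yields $i<j$ with $(\lambda_i([u_i]),\sigma_i)\leq_{\overline D(L)}(\lambda_j([u_j]),\sigma_j)$. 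Writing $a=\lambda_i([u_i])$ and $b=\lambda_j([u_j])$, the relation $\lambda_j\circ h[u_i,u_j]=\lambda_i$ gives $a\subseteq b$, so $a\cup b=b$; unwinding the definition of $\leq_{\overline D(L)}$ (as in the proof of Theorem~\ref{thm:extend-po-dilator}, in the linear form of~\cite[Definition~2.2]{freund-computable}) and using $|\iota_b^b|=\id$ leaves the inequality $D(|\iota_a^b|)(\sigma_i)\leq_{D([u_j])}\sigma_j$. Finally $|\iota_a^b|\colon[u_i]\to[u_j]$ is characterised by $\en_b\circ|\iota_a^b|=\iota_a^b\circ\en_a=\lambda_i=\lambda_j\circ h[u_i,u_j]=\en_b\circ h[u_i,u_j]$, so injectivity of $\en_b=\lambda_j$ forces $|\iota_a^b|=h[u_i,u_j]$. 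Hence $D(h[u_i,u_j])(\sigma_i)\leq_{D([u_j])}\sigma_j$, which is the desired relation $(u_i,\sigma_i)\leq_{W_D(X)}(u_j,\sigma_j)$.

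I expect the main obstacle to be the first step: recognising that the prescribed compatibility of the $u^i$ is exactly what turns $\{[u_i]\}$ into a genuine direct system and makes its limit a well order by way of a quasi embedding into~$Z$; once $\overline D$ can be applied to an honest well order everything else is bookkeeping, the one delicate point being the identification $|\iota_a^b|=h[u_i,u_j]$ that translates the inequality in $\overline D(L)$ back into a witness in $\hig(u_i,u_j)$. One should also check that the direct limit~$L$ of the finite linear orders $[u_i]$ along the given family of strictly increasing maps can be formed in~$\rca_0$, which is routine given the cocycle condition.
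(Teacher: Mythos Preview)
Your proposal is correct and follows essentially the same route as the paper: derive the cocycle identity from injectivity of the $u^k$, form the direct limit $L$ (the paper calls it $Y$) of the system $([u_i],h[u_i,u_j])$, use the induced quasi embedding into $Z$ to see that $L$ is a well order, apply $\overline D$ to obtain a well order, and then unwind the resulting inequality via the identification $|\iota_a^b|=h[u_i,u_j]$. The paper additionally gives an explicit $\rca_0$-level construction of the direct limit, which you flag but do not spell out; otherwise the arguments coincide.
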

\begin{proof}
For $i\leq j$ we abbreviate $h_{ij}:=h[u_i,u_j]$. Note that $h_{ii}$ is the identity on~$[u_i]$. The assumptions of the proposition imply $h_{jk}\circ h_{ij}=h_{ik}$ for $i\leq j\leq k$, via
\begin{equation*}
u^k\circ h_{jk}\circ h_{ij}=u^j\circ h_{ij}=u^i=u^k\circ h_{ik}.
\end{equation*}
In~$\rca_0$, the direct limit of the embeddings $h_{ij}:[u_i]\to[u_j]$ can be given as
\begin{gather*}
Y=\{(j,n)\,|\,n<[u_j]\text{ and $n\notin\rng(h_{ij})$ for all $i<j$}\},\\
(i,m)\leq_Y(j,n)\,\Leftrightarrow\,h_{ik}(m)\leq h_{jk}(n)\text{ with }k=\max\{i,j\}.
\end{gather*}
It is straightforward to check that~$Y$ is a linear order. Furthermore, a family of embeddings $w_i:[u_i]\to Y$ with $w_j\circ h_{ij}=w_i$ for $i\leq j$ can be defined by
\begin{equation*}
w_i(m)=(j,n)\quad\text{with $j=\min\{j\leq i\,|\,m\in\rng(h_{ji})\}$ and $m=h_{ji}(n)$}.
\end{equation*}
Note that any $(i,m)\in Y$ arises as $(i,m)=w_i(m)\in\rng(w_i)$. If we define $u:Y\to Z$ by $u((i,m))=u^i(m)$, then we get $u\circ w_i=u^i$ for all~$i\in\mathbb N$. One readily verifies that $u$ is a quasi embedding. Since $Z$ is a well partial order, it follows that~$Y$ is a well order. Recall that~$\overline D$ denotes the class-sized extension of the coded LO-dilator~$D$ (cf.~\cite[Section~2]{freund-computable}, where $\overline T$ is written as $D^T$). Given that $D$ is a WO-dilator, we learn that $\overline D(Y)$ is a well order. Put $a_i:=\rng(w_i)\in[Y]^{<\omega}$ and observe $|a_i|=[u_i]$. As $(u_i,\sigma_i)\in W_D(X)$ entails $([u_i],\sigma_i)\in\trace(D)$, we can consider the sequence
\begin{equation*}
(a_0,\sigma_0),(a_1,\sigma_0),\ldots\subseteq\overline D(Y).
\end{equation*}
Since $\overline D(Y)$ is a well order, we get indices $i<j$ with $(a_i,\sigma_i)\leq_{\overline D(Y)}(a_j,\sigma_j)$. In~view of~\cite[Definition~2.2]{freund-computable} and $a_i\subseteq a_j=a_i\cup a_j$, this inequality amounts to
\begin{equation*}
D(|\iota|)(\sigma_i)\leq_{D(|a_j|)}\sigma_j\quad\text{for the inclusion $\iota:a_i\hookrightarrow a_j$}.
\end{equation*}
Recall that the function $|\iota|:|a_i|\to|a_j|$ is uniquely characterized by $\en_j\circ|\iota|=\iota\circ\en_i$, where $\en_i:|a_i|\to a_i$ is the increasing enumeration. If we write $\iota_i:a_i\hookrightarrow Y$ for the inclusion, then the function $\iota_i\circ\en_i:|a_i|=[u_i]\to Y$ is strictly increasing with range $a_i$, so that it must coincide with $w_i$. Hence we get
\begin{equation*}
\iota_j\circ\en_j\circ h_{ij}=w_j\circ h_{ij}=w_i=\iota_i\circ\en_i=\iota_j\circ\iota\circ\en_i.
\end{equation*}
This yields $\en_j\circ h_{ij}=\iota\circ\en_i$ and then $h_{ij}=|\iota|$. We thus get $D(h_{ij})(\sigma_i)\leq_{D([u_j])}\sigma_j$, which means that $h_{ij}\in\hig(u_i,u_j)$ witnesses $(u_i,\sigma_i)\leq_{W_D(X)}(u_j,\sigma_j)$.
\end{proof}

The first part of the previous proof suggests to single out the following notion.

\begin{definition}[$\rca_0$]
For a partial order $X$, a sequence $u_0,u_1,\ldots\subseteq\emb(X)$ is called directed if we have $u_0\leq_H u_1\leq_H\ldots$ and $h[u_i,u_k]=h[u_j,u_k]\circ h[u_i,u_j]$ for all $i<j<k$.
\end{definition}

As the next result shows, a directed sequence is all we need in order to satisfy the assumptions of Proposition~\ref{prop:extension-wpo}. In the following we consider $\emb(X)$ with the partial order~$\leq_H$ from Higman's lemma. The latter is provable in $\aca_0$ (due to Simpson~\cite{simpson-higman}) and ensures that $\emb(X)$ is a wpo whenever the same holds for~$X$.

\begin{proposition}[$\rca_0$]\label{lem:directed-sequence-limit}
For any directed sequence $u_0,u_1,\ldots\subseteq\emb(X)$ there is a family of quasi embeddings $u^k:[u_k]\to\emb(X)$ with $u^l\circ h[u_k,u_l]=u^k$~for~$k<l$.
\end{proposition}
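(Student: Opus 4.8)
The plan is to reuse the direct limit construction from the proof of Proposition~\ref{prop:extension-wpo}. Given the directed sequence $u_0,u_1,\ldots$, abbreviate $h_{kl}:=h[u_k,u_l]$ and form the linear order $Y$ together with the embeddings $w_k:[u_k]\to Y$ satisfying $w_l\circ h_{kl}=w_k$ for $k\le l$; the cocycle condition $h_{km}=h_{lm}\circ h_{kl}$ needed for this is precisely the definition of a directed sequence, and recall that every element of $Y$ has the form $(j,n)=w_j(n)$ with $n<[u_j]$. The idea is to push $Y$ into $\emb(X)$ by a single function and then precompose with the $w_k$: define $v:Y\to\emb(X)$ by $v((j,n)):=u_j\!\restriction\!\{0,\dots,n\}$, the restriction of $u_j$ to its initial segment of length $n+1$ (an element of $\emb(X)$, since $u_j$ is a quasi embedding), and set $u^k:=v\circ w_k$. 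The required compatibility $u^l\circ h_{kl}=v\circ w_l\circ h_{kl}=v\circ w_k=u^k$ is then immediate.

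It remains to check that each $u^k$ is a quasi embedding; since $w_k$ is an embedding, it suffices to show that $v(w_k(i))\leq_H v(w_k(i'))$ implies $w_k(i)\leq_Y w_k(i')$, hence $i\le i'$. The key lemma, and the only place where minimality of $h[\cdot,\cdot]$ enters, is a remark about restrictions: because $h[u,w](i)$ is defined by recursion on $i$ using only $u(0),\dots,u(i)$ and $w$, one has $h[u\!\restriction\!\{0,\dots,n\},w]=h[u,w]\!\restriction\!\{0,\dots,n\}$ whenever $u\leq_H w$, and therefore $h[u,w](i)\le g(i)$ for all $i\le n$ and every $g\in\hig(u\!\restriction\!\{0,\dots,n\},w)$.

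Now suppose $v(w_k(i))\leq_H v(w_k(i'))$. Write $w_k(i)=(j,n)=w_j(n)$ and $w_k(i')=(j',n')=w_{j'}(n')$, put $m=\max\{j,j'\}$, and set $p=h_{jm}(n)$, $p'=h_{j'm}(n')$. Restricting $h_{j'm}$ to $\{0,\dots,n'\}$ exhibits $u_{j'}\!\restriction\!\{0,\dots,n'\}\leq_H u_m\!\restriction\!\{0,\dots,p'\}$, so by transitivity $u_j\!\restriction\!\{0,\dots,n\}\leq_H u_m\!\restriction\!\{0,\dots,p'\}$; a witnessing Higman embedding $g$ then has $g(n)\le p'$, and the minimality remark (applied with $n$ in place of $i$, and $u_m$ in place of $w$) gives $p=h_{jm}(n)=h[u_j\!\restriction\!\{0,\dots,n\},u_m](n)\le g(n)\le p'$. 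Finally $w_m(h_{jm}(n))=w_j(n)=w_k(i)$ and likewise $w_m(h_{j'm}(n'))=w_k(i')$, so $p\le p'$ together with $w_m$, $w_k$ being embeddings yields $w_k(i)\leq_Y w_k(i')$ and thus $i\le i'$. All steps go through in $\rca_0$, matching the statement.

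The main obstacle is conceptual rather than technical: the two obvious candidates for $u^k$ — sending $i$ to the singleton $(u_k(i))$, or to the initial segment $u_k\!\restriction\!\{0,\dots,i\}$ — both fail the \emph{equality} $u^l\circ h_{kl}=u^k$, since a directed sequence only provides $u_k(i)\le_X u_l(h_{kl}(i))$ and not equality. Routing the definition through the direct limit $Y$, so that the value $u^k(i)$ depends only on the limit position $w_k(i)$, makes exact compatibility automatic; the price is that the value must be computed from the ``earliest'' representative $u_j$, and it is exactly the minimality of the canonical Higman embeddings $h[\cdot,\cdot]$ that guarantees no comparison information is lost in doing so. I expect the verification of the minimality remark (that $h[u,w]$ restricted to an initial segment is again the minimal Higman embedding of the corresponding restriction of $u$) and its clean use above to be the only delicate bookkeeping.
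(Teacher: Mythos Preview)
Your proof is correct and is essentially the paper's argument: your $u^k=v\circ w_k$ unpacks to exactly the paper's direct definition $u^k(n)=u_i\!\restriction\!(m+1)$ for $n=h_{ik}(m)$ with $i$ minimal, and your quasi embedding check via the minimality of $h[\cdot,\cdot]$ is the same computation the paper carries out (your passage to $m=\max\{j,j'\}$ simply unifies the paper's two cases $j\le i$ and $i<j$). The one presentational difference is that you reuse the direct limit $Y$ and the embeddings $w_k$ from Proposition~\ref{prop:extension-wpo}, which makes the compatibility $u^l\circ h_{kl}=u^k$ immediate rather than something to verify by hand.
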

\begin{proof} 
As before, we abbreviate $h_{ij}:=h[u_i,u_j]$ for $i\leq j$. Since $h_{kk}$ is the identity, any number $n<[u_k]$ can be written as $n=h_{ik}(m)$ with $i\leq k$ and $m<[u_i]$. Note that $m$ is uniquely determined once a choice for~$i$ has been fixed. We define
\begin{equation*}
u^k(n)=u_i\!\restriction\!(m+1)\quad\text{for $n=h_{ik}(m)$ with $i\leq k$ as small as possible}.
\end{equation*}
To avoid confusion we emphasize that $u_i\in\emb(X)$ is a function with codomain~$X$ while $u^k$ is a function with codomain~$\emb(X)$. For $k<l$ we see that $n=h_{ik}(m)$ implies $h_{kl}(n)=h_{kl}\circ h_{ik}(m)=h_{il}(m)$. Furthermore, if we had $j<i$ and $m'<[u_j]$ with $h_{kl}(n)=h_{jl}(m')=h_{kl}\circ h_{jk}(m')$, then the fact that $h_{kl}$ is strictly increasing would yield $n=h_{jk}(m')$, against the minimality of~$i$ in $n=h_{ik}(m)$. Hence we get
\begin{equation*}
u^l\circ h_{kl}(n)=u^l\circ h_{il}(m)=u_i\!\restriction\!(m+1)=u^k\circ h_{ik}(m)=u^k(n).
\end{equation*}
It remains to show that each function $u^k:[u_k]\to\emb(X)$ is a quasi embedding. For this purpose we consider an inequality $u^k(n)\leq_H u^k(n')$. Writing $n=h_{ik}(m)$ and $n'=h_{jk}(m')$ with $i$ and $j$ as small as possible, the latter amounts to
\begin{equation*}
u_i\!\restriction\!(m+1)\leq_H u_j\!\restriction\!(m'+1).
\end{equation*}
This inequality is witnessed by a strictly increasing $h:\{0,\dots,m\}\to\{0,\dots,m'\}$ such that $u_i(l)\leq_X u_j(h(l))$ holds for all~$l\leq m$. Let us first assume $j\leq i$. In this case we observe that $m\leq h(m)\leq m'\leq h_{ji}(m')$ implies
\begin{equation*}
n=h_{ik}(m)\leq h_{ik}\circ h_{ji}(m')=h_{jk}(m')=n'.
\end{equation*}
Now assume~$i<j$. We recall that $h_{ij}=h[u_i,u_j]\in\hig(u_i,u_j)$ was defined as the element with the smallest possible values. By induction on $l\leq m<[u_i]$ one can deduce~$h_{ij}(l)\leq h(l)$. For $l=m$ this yields $h_{ij}(m)\leq h(m)\leq m'$ and then
\begin{equation*}
n=h_{ik}(m)=h_{jk}\circ h_{ij}(m)\leq h_{jk}(m')=n',
\end{equation*}
as needed to show that $u^k$ is a quasi embedding.
\end{proof}

Finally, we satisfy the precondition of the previous proposition:

\begin{proposition}[$\aca_0$]\label{prop:directed-subsequence}
If $X$ is a well partial order, then any infinite sequence in~$\emb(X)$ has a directed subsequence.
\end{proposition}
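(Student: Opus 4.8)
The plan is to work in $\aca_0$ and, after reducing to a well-behaved subsequence, to split into a trivial case and a genuinely delicate one. First I would use Higman's lemma (provable in $\aca_0$, as recalled above) to the effect that $(\emb(X),\leq_H)$ is a well partial order: then the set of indices $n$ with $u_n\not\leq_H u_m$ for all $m>n$ is finite — otherwise it enumerates a bad sequence — and beyond its maximum one recursively chooses $n_0<n_1<\cdots$ with $u_{n_k}\leq_H u_{n_{k+1}}$. Passing to this subsequence we may assume $u_0\leq_H u_1\leq_H\cdots$; since any $h\in\hig(u_i,u_j)$ is a strictly increasing map $[u_i]\to[u_j]$, the lengths $[u_n]$ are then non-decreasing, so a further thinning lets us assume they are either constant or strictly increasing.

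If the lengths are constant, say equal to $m$, then the only strictly increasing map $m\to m$ is the identity, so $h[u_i,u_j]=\id_m$ for all $i<j$; the equations $h[u_i,u_k]=h[u_j,u_k]\circ h[u_i,u_j]$ hold trivially and the sequence is already directed. So assume the lengths strictly increasing, and write $g_n:=h[u_n,u_{n+1}]$. Using transitivity of $\leq_X$ one sees that $g_{j-1}\circ\cdots\circ g_i\in\hig(u_i,u_j)$, so minimality of the greedy witness gives $h[u_i,u_j](l)\leq(g_{j-1}\circ\cdots\circ g_i)(l)$ for all $l<[u_i]$, and a short induction shows that the sequence is directed precisely when equality holds for every $i<j$. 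The remaining task is thus to thin the sequence so that each greedy witness equals the composite of the intervening consecutive witnesses. I would do this by stabilising the positions one at a time: exploiting that $X$ is a well partial order, thin so that for each $l$ the position inside $u_j$ to which $u_i(l)$ is greedily matched is the same for all later $j$ in the final subsequence; once the matched position of every coordinate has been stabilised, the greedy witness between any two terms must agree with the composite of the consecutive ones.

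The hard part will be carrying out this last thinning inside $\aca_0$. Stabilising one coordinate is a routine extraction of an increasing subsequence from a sequence in the wpo $X$, but iterating this over all coordinates and diagonalising would consume $\omega$ instances of arithmetical comprehension, which $\aca_0$ does not provide. The real content is therefore to replace the unbounded iteration by a single extraction — for instance by arranging the partial-subsequence data into a suitably bounded finitely branching tree and applying König's lemma (which is equivalent to $\aca_0$) — so that all coordinates are stabilised simultaneously. A minor additional point is that $\emb(X)$ must be manipulated through its $\rca_0$-representation by pairs $(a,u_0)$ with $a\subseteq X$ a finite suborder and $u_0$ a surjective quasi embedding, but this does not affect the substance of the argument.
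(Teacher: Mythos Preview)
Your reduction to a $\leq_H$-increasing sequence is fine (the paper does the same via Ramsey for pairs rather than your direct argument, but either works in $\aca_0$), and the split into constant versus strictly increasing lengths is harmless. The genuine gap is at the step you yourself flag as ``the hard part''. You correctly diagnose that stabilising the coordinates one at a time would cost $\omega$ nested applications of arithmetical comprehension, and you propose to replace this by ``arranging the partial-subsequence data into a suitably bounded finitely branching tree and applying K\"onig's lemma''. But this is not carried out: you do not say what the nodes of the tree are, why the tree is finitely branching, or why an infinite branch encodes a directed subsequence. Moreover, the informal target of the stabilisation (``the position inside $u_j$ to which $u_i(l)$ is greedily matched is the same for all later $j$'') does not type-check as written, since these positions lie in the varying sets $[u_j]$ and cannot literally coincide. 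So at present the proposal stops exactly where the real work begins.

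The paper's solution is a different single-extraction device that you do not mention: Ramsey's theorem for triples. After passing to a $\leq_H$-chain, one defines for $i\le j$ the vector
\[
o_i(j)=\langle\,u_j\!\restriction\!(h_{ij}(0)+1),\dots,u_j\!\restriction\!(h_{ij}([u_i]-1)+1)\,\rangle\in\emb(X)^{[u_i]},
\]
and colours a triple $i<j<k$ by whether $o_i(j)$ lies componentwise $\leq_H$-below $o_i(k)$. Ramsey for exponent $3$ gives a homogeneous subsequence; Higman's lemma (applied to the finite product $\emb(X)^{[u_i]}$) forces the homogeneous colour to be the ``good'' one. The short computation then shows that $o_i(j)\preceq_i o_i(k)$ already implies $h_{jk}\circ h_{ij}(l)\le h_{ik}(l)$ for every $l$, which together with the minimality inequality you noted gives $h_{ik}=h_{jk}\circ h_{ij}$. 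This delivers the directed subsequence in one pass, with no iterated thinning and no appeal to K\"onig's lemma. If you want to rescue your outline, the missing idea is precisely a device of this kind---a single colouring of triples that encodes the compatibility condition---rather than a tree argument.
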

\begin{proof}
Recall that~$\aca_0$ proves Higman's lemma, as well as the infinite Ramsey theorem for each finite exponent (see~e.\,g.~\cite{simpson09}). We want to construct a directed subsequence of a given sequence $w_0,w_1,\ldots\subseteq\emb(X)$. Ramsey's theorem for pairs yields a strictly increasing function $f:\mathbb N\to\mathbb N$ such that one of the following~holds: Either we have $w_{f(i)}\leq_H w_{f(j)}$ for all $i<j$, or we have $w_{f(i)}\not\leq_H w_{f(j)}$ for all~$i<j$. The latter is excluded by Higman's lemma, given that~$X$ is a well partial order. With $u_i:=w_{f(i)}$ it suffices to construct a directed subsequence of $u_0\leq_H u_1\leq_H...$. For $i\leq j$ we abbreviate $h_{ij}:=h[u_i,u_j]\in\hig(u_i,u_j)$ and set
\begin{equation*}
o_i(j):=\langle u_j\!\restriction\!(h_{ij}(0)+1),\dots,u_j\!\restriction\!(h_{ij}([u_i]-1)+1)\rangle\in\emb(X)^{[u_i]}.
\end{equation*}
Write $\preceq_i$ for the usual partial order on the product $\emb(X)^{[u_i]}$, so that we have
\begin{equation*}
o_i(j)\preceq_i o_i(k)\quad\Leftrightarrow\quad u_j\!\restriction\!(h_{ij}(l)+1)\leq_H u_k\!\restriction\!(h_{ik}(l)+1)\text{ for all $l<[u_i]$}.
\end{equation*}
Given $i<j<k$, we now put
\begin{equation*}
c(i,j,k)=\begin{cases}
1 & \text{if $o_i(j)\preceq_i o_i(k)$},\\
0 & \text{otherwise}.
\end{cases}
\end{equation*}
By Ramsey's theorem (for exponent~$3$) there is a value $i_0\in\{0,1\}$ and a strictly increasing function $g:\mathbb N\to\mathbb N$ with $c(g(i),g(j),g(k))=i_0$ for all $i<j<k$. For any~$i\in\mathbb N$, another application of Higman's lemma tells us that $\preceq_{g(i)}$ is a well partial order. This yields indices $i<j<k$ with
\begin{equation*}
o_{g(i)}(g(j))\preceq_{g(i)} o_{g(i)}(g(k)).
\end{equation*}
Hence we get $i_0=c(g(i),g(j),g(k))=1$, which means that the last inequality holds for all indices~$i<j<k$. We want to conclude that $u_{g(0)},u_{g(1)},\dots$ is the directed subsequence required by the proposition. It suffices to show that
\begin{equation*}
o_i(j)\preceq_i o_i(k)\quad\Rightarrow\quad h_{ik}=h_{jk}\circ h_{ij}
\end{equation*}
holds for all $i<j<k$. One readily verifies $h_{jk}\circ h_{ij}\in\hig(u_i,u_k)$. We have already observed that $h_{ik}=h[u_i,u_k]$ is minimal in this set, so that we have
\begin{equation*}
h_{ik}(l)\leq h_{jk}\circ h_{ij}(l)\quad\text{for all $l<[u_i]$}.
\end{equation*}
To establish the converse inequality for a given $l<[u_i]$, recall that $o_i(j)\preceq_i o_i(k)$ entails the inequality
\begin{equation*}
u_j\!\restriction\!(h_{ij}(l)+1)\leq_H u_k\!\restriction\!(h_{ik}(l)+1).
\end{equation*}
The latter is witnessed by a strictly increasing $h:\{0,\dots,h_{ij}(l)\}\to\{0,\dots,h_{ik}(l)\}$ such that we have $u_j(m)\leq_X u_k(h(m))$ for all $m\leq h_{ij}(l)$. By induction on~$m$ one shows~$h_{jk}(m)\leq h(m)$. In particular we get $h_{jk}\circ h_{ij}(l)\leq h(h_{ij}(l))\leq h_{ik}(l)$.
\end{proof}

By combining the previous propositions, we obtain our main technical result:

\begin{theorem}[$\aca_0$]\label{thm:W_D-wpo}
If $D$ is a WO-dilator, then $W_D$ is a WPO-dilator.
\end{theorem}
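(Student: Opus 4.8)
The plan is to chain together the three propositions that immediately precede the theorem. Since $D$ is in particular an LO-dilator, Proposition~\ref{prop:W_D-PO-dilator} tells us that $W_D$ is a normal PO-dilator; arguing as in Example~\ref{ex:higman}, one checks that the class-sized reconstruction of its coded restriction agrees with the transformation $W_D$ of Definition~\ref{def:extend-lo-dilators}. By Definition~\ref{def:coded-wpo-dilator} it therefore suffices to prove that $W_D(X)$ is a well partial order for every wpo~$X$, and by Proposition~\ref{prop:dilator-countable-wpo} we may assume that $X$ is countable.

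Fix such an $X$ and an arbitrary infinite sequence $(u_0,\sigma_0),(u_1,\sigma_1),\ldots$ in $W_D(X)$; we must produce $i<j$ with $(u_i,\sigma_i)\leq_{W_D(X)}(u_j,\sigma_j)$. Because $\aca_0$ proves Higman's lemma (Simpson~\cite{simpson-higman}), the order $(\emb(X),\leq_H)$ is a well partial order. Applying Proposition~\ref{prop:directed-subsequence} to the sequence $u_0,u_1,\ldots\subseteq\emb(X)$, we obtain a strictly increasing $g:\mathbb N\to\mathbb N$ such that $u_{g(0)},u_{g(1)},\ldots$ is directed; replacing the original sequence by $(u_{g(0)},\sigma_{g(0)}),(u_{g(1)},\sigma_{g(1)}),\ldots$ (and noting that a pair of indices found in this subsequence yields, via~$g$, a good pair in the original sequence), we may as well assume that $u_0,u_1,\ldots$ is itself directed.

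Now Proposition~\ref{lem:directed-sequence-limit} furnishes quasi embeddings $u^k:[u_k]\to\emb(X)$ with $u^l\circ h[u_k,u_l]=u^k$ for all $k<l$. Taking the well partial order $Z:=\emb(X)$, the hypotheses of Proposition~\ref{prop:extension-wpo} are satisfied for the WO-dilator~$D$, the partial order~$X$, the directed (hence $\leq_H$-increasing) sequence $(u_i,\sigma_i)$, and the family $(u^i)$. That proposition yields indices $i<j$ with $(u_i,\sigma_i)\leq_{W_D(X)}(u_j,\sigma_j)$, which is exactly what we wanted.

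Since all of the combinatorial and dilator-theoretic content has already been discharged in Propositions~\ref{prop:extension-wpo}, \ref{lem:directed-sequence-limit} and~\ref{prop:directed-subsequence}, the proof of the theorem is essentially bookkeeping; the only points that require a moment's care are the reduction from the directly defined class-sized $W_D$ to its coded restriction and to countable~$X$, and the repeated appeals to Higman's lemma (hence to $\aca_0$) needed to know that $\emb(X)$ is a well partial order — both for the extraction of the directed subsequence and for the choice $Z=\emb(X)$ in the final step.
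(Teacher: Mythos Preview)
Your proof is correct and follows essentially the same route as the paper: reduce to showing $W_D(X)$ is a wpo via the isomorphism $\overline{W_D}(X)\cong W_D(X)$, pass to a directed subsequence by Proposition~\ref{prop:directed-subsequence}, build the compatible family of quasi embeddings into $\emb(X)$ by Proposition~\ref{lem:directed-sequence-limit}, and conclude with Proposition~\ref{prop:extension-wpo} taking $Z=\emb(X)$. Two minor remarks: the relevant precedent for the reconstruction $\overline{W_D}(X)\cong W_D(X)$ is Example~\ref{ex:coded-class-iso} rather than Example~\ref{ex:higman}, and the appeal to Proposition~\ref{prop:dilator-countable-wpo} is unnecessary in $\aca_0$, where all orders are countable by convention.
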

\begin{proof}
In view of Proposition~\ref{prop:W_D-PO-dilator} it suffices to show that $\overline W_D(X)$ is a well partial order whenever the same holds for~$X$. Here $\overline W_D$ is the class-sized extension of the coded PO-dilator~$W_D$, as constructed in Section~\ref{sect:formalize-po-dilators}. According to Theorem~\ref{thm:reconstruct-class-dilator} we have $\overline W_D(X)\cong W_D(X)$. While the general version of the cited theorem cannot be formulated in reverse mathematics, the present instance is provable in~$\rca_0$, as in the case of Example~\ref{ex:coded-class-iso}. Given a wpo~$X$, it thus remains to show that~$W_D(X)$ is a wpo. For this purpose we consider an infinite sequence
\begin{equation*}
(u_0,\sigma_0),(u_1,\sigma_1),\ldots\subseteq W_D(X).
\end{equation*}
By Proposition~\ref{prop:directed-subsequence} we may assume that $u_0\leq_H u_1\leq_H\ldots\subseteq\emb(X)$ is directed. Then Proposition~\ref{lem:directed-sequence-limit} yields a family of quasi embeddings $u^i:[u_i]\to\emb(X)$ such that we have $u^j\circ h[u_i,u_j]=u^i$ for all $i<j$. We can now apply Proposition~\ref{prop:extension-wpo} with $Z=\emb(X)$, which is a well partial order by Higman's lemma. This yields indices $i<j$ with $(u_i,\sigma_i)\leq_{W_D(X)}(u_j,\sigma_j)$, so that the sequence above is good.
\end{proof}

Finally, we are able to deduce our main equivalence:

\begin{samepage}
\begin{theorem}\label{thm:main}
The following are equivalent over $\rca_0$ extended by the chain-antichain principle:
\begin{enumerate}
\item the principle of $\Pi^1_1$-comprehension,
\item the uniform Kruskal theorem: if $W$ is a normal WPO-dilator, then $\T W$ is a well partial order,
\item any normal WPO-dilator has a well partial ordered Kruskal fixed point.
\end{enumerate}
\end{theorem}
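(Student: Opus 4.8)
I would prove the cycle $(1)\Rightarrow(2)\Rightarrow(3)\Rightarrow(1)$. The implication $(1)\Rightarrow(2)$ needs no new work: over $\rca_0+\cac$ the assumption of $\Pi^1_1$-comprehension puts us in a theory extending $\pica_0$, so Theorem~\ref{thm:Pi11-to-uniform-Kruskal} applies directly. For $(2)\Rightarrow(3)$, given a normal WPO-dilator~$W$, Theorem~\ref{thm:initial-fixed-point} already produces the initial Kruskal fixed point $(\T W,\kappa)$, and the uniform Kruskal theorem~(2) says precisely that $\T W$ is a well partial order; so $(\T W,\kappa)$ is the desired well partial ordered Kruskal fixed point.

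The real content is $(3)\Rightarrow(1)$, and the first step I would take is to observe that (3) in fact implies (2). Indeed, if $(X,\kappa')$ is a well partial ordered Kruskal fixed point of a normal WPO-dilator~$W$, then by the initiality clause of Theorem~\ref{thm:initial-fixed-point} there is a quasi embedding $f\colon\T W\to X$; since a quasi embedding into a well partial order forces its domain to be a well partial order, $\T W$ is a well partial order. Hence it suffices to derive $\Pi^1_1$-comprehension from~(2). By Lemma~\ref{lem:uniform-Kruskal_ACA}, statement~(2) already yields arithmetical comprehension, so from here on I would argue inside~$\aca_0$. By the theorem of Freund~\cite{freund-equivalence,freund-categorical,freund-computable} that $\Pi^1_1$-comprehension is equivalent over~$\rca_0$ to the Bachmann--Howard principle, it is then enough to show that $\vartheta(D)$ is a well order for every WO-dilator~$D$.

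So fix a WO-dilator~$D$. Lemma~\ref{lem:WO-dilator-monotone} shows that $D$ is monotone; Proposition~\ref{prop:W_D-PO-dilator} shows that $W_D$ is a normal PO-dilator; and Theorem~\ref{thm:W_D-wpo} shows that $W_D$ is a WPO-dilator. Thus $W_D$ is a normal WPO-dilator, and~(2) gives that $\T W_D$ is a well partial order. Proposition~\ref{prop:D-into-W_D} supplies a quasi embedding $\nu\colon D\Rightarrow W_D$, so Theorem~\ref{thm:quasi-embedding} produces a quasi embedding $\vartheta(D)\to\T W_D$. As $\vartheta(D)$ is a linear order that quasi-embeds into the well partial order~$\T W_D$, it can contain no infinite descending sequence, i.e.\ it is a well order. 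This proves the Bachmann--Howard principle, and therefore, by Freund's equivalence, $\Pi^1_1$-comprehension.

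I do not anticipate a serious obstacle in carrying this out, since every hard construction---the extensions $\overline W$ and $W_D$, the term system $\T W$ and its characterisation as an initial fixed point, the quasi embedding $\vartheta(D)\to\T W$, and the wpo-preservation theorem for $W_D$---is already in place. The points that need attention are the order of the bootstrap (one must first extract $\aca_0$ from~(2) before invoking Lemma~\ref{lem:WO-dilator-monotone} and Theorem~\ref{thm:W_D-wpo}, which are only stated over $\aca_0$) and the fact that Freund's equivalence between the Bachmann--Howard principle and $\Pi^1_1$-comprehension is available over a base theory as weak as ours.
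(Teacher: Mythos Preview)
Your proposal is correct and matches the paper's own proof essentially step for step: the paper establishes $(1)\Rightarrow(2)$ via Theorem~\ref{thm:Pi11-to-uniform-Kruskal}, the equivalence $(2)\Leftrightarrow(3)$ via the initiality in Theorem~\ref{thm:initial-fixed-point}, and then $(2)\Rightarrow(1)$ by first bootstrapping to $\aca_0$ with Lemma~\ref{lem:uniform-Kruskal_ACA} and then invoking Freund's equivalence together with Lemma~\ref{lem:WO-dilator-monotone}, Propositions~\ref{prop:W_D-PO-dilator} and~\ref{prop:D-into-W_D}, Theorem~\ref{thm:W_D-wpo}, and Theorem~\ref{thm:quasi-embedding}. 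Your remarks about the order of the bootstrap and the base theory for Freund's result are exactly the two points the paper is careful about.
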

\end{samepage}
\begin{proof}
From Theorem~\ref{thm:Pi11-to-uniform-Kruskal} we know that~(1) implies~(2). The implication from~(2) to~(3) holds because $\T W$ is a Kruskal fixed point of~$W$, due to Theorem~\ref{thm:initial-fixed-point}. The same theorem tells us that any other Kruskal fixed point~$X$ of~$W$ admits a quasi embedding~$f:\T W\to X$. The latter ensures that $\T W$ is a well partial order if the same holds for~$X$, which shows that~(3) implies~(2). To complete the proof we assume~(2) and deduce~(1). In view of Lemma~\ref{lem:uniform-Kruskal_ACA} (which uses the chain-antichain principle) we can work over~$\aca_0$. Freund~\cite[Theorem~4.3]{freund-computable} has shown that \mbox{$\Pi^1_1$-comprehension} is equivalent to a computable Bachmann-Howard principle, which asserts that $\vartheta(D)$ is well founded for any WO-dilator~$D$. So it suffices to establish this principle. Given a WO-dilator~$D$, we consider the normal PO-dilator~$W_D$ from Proposition~\ref{prop:W_D-PO-dilator}. Lemma~\ref{lem:WO-dilator-monotone} (originally due to Girard) ensures that~$D$ is monotone. Then Proposition~\ref{prop:D-into-W_D} yields a quasi embedding~\mbox{$\nu:D\Rightarrow W_D$}. The latter can be transformed into a quasi embedding $f:\vartheta(D)\to\T W_D$, due to Theorem~\ref{thm:quasi-embedding}. We now invoke Theorem~\ref{thm:W_D-wpo} to learn that $W_D$ is a WPO-dilator. By the uniform Kruskal theorem from~(2) it follows that~$\T W_D$ is a well partial order. Due to the quasi embedding $f$, this implies that $\vartheta(D)$ is well founded, as required by the computable Bachmann-Howard principle.
\end{proof}

We do not know if the equivalence holds without the chain-antichain principle. The obvious attempt at a positive answer would go via a variant of the transformation $X\mapsto 1+Z\times X$ from Lemma~\ref{lem:uniform-Kruskal_ACA}. It would certainly be interesting to investigate $\T W$ in cases where $\rca_0$ proves that $X\mapsto W(X)$ preserves well partial orders.

\bibliographystyle{amsplain}
\bibliography{Kruskal_general}

\newcommand{\noopsort}[1]{}
\providecommand{\bysame}{\leavevmode\hbox to3em{\hrulefill}\thinspace}
\providecommand{\MR}{\relax\ifhmode\unskip\space\fi MR }
\providecommand{\MRhref}[2]{%
  \href{http://www.ams.org/mathscinet-getitem?mr=#1}{#2}
}
\providecommand{\href}[2]{#2}
\begin{thebibliography}{10}

\bibitem{aczel-normal-functors}
Peter Aczel, \emph{Normal functors on linear orderings}, Journal of Symbolic
  Logic \textbf{32} (1967), p.~430, abstract to a paper presented at the annual
  meeting of the Association for Symbolic Logic, Houston, Texas, 1967.

\bibitem{cholak-RM-wpo}
Peter Cholak, Alberto Marcone, and Reed Solomon, \emph{Reverse mathematics and
  the equivalence of definitions for well and better quasi-orders}, Journal of
  Symbolic Logic \textbf{69} (2004), no.~3, 683--712.

\bibitem{dershowitz82}
Nachum Dershowitz, \emph{Orderings for term-rewriting systems}, Theoretical
  Computer Science \textbf{17} (1982), 279--301.

\bibitem{freund-thesis}
Anton Freund, \emph{Type-{T}wo {W}ell-{O}rdering {P}rinciples, {A}dmissible
  {S}ets, and ${\Pi}^1_1$-{C}omprehension}, Ph{D} thesis, University of Leeds,
  2018, \url{http://etheses.whiterose.ac.uk/20929/}.

\bibitem{freund-equivalence}
\bysame, \emph{${\Pi}^1_1$-comprehension as a well-ordering principle},
  Advances in Mathematics \textbf{355} (2019),
  \url{https://doi.org/10.1016/j.aim.2019.106767}.

\bibitem{freund-categorical}
\bysame, \emph{A categorical construction of {B}achmann-{H}oward fixed points},
  Bulletin of the London Mathematical Society \textbf{51} (2019), no.~5,
  801--814.

\bibitem{freund-single-fixed-points}
\bysame, \emph{How strong are single fixed points of normal functions?}, 2019,
  arXiv:1906.00645.

\bibitem{freund-ordinal-exponentiation}
\bysame, \emph{A note on ordinal exponentiation and derivatives of normal
  functions}, 2019, arXiv:1908.00280.

\bibitem{freund-computable}
\bysame, \emph{Computable aspects of the {B}achmann-{H}oward principle},
  Journal of Mathematical Logic (to appear),
  \url{https://doi.org/10.1142/S0219061320500063}.

\bibitem{freund_predicative-collapsing}
\bysame, \emph{Predicative collapsing principles}, Journal of Symbolic Logic
  (to appear), \url{https://doi.org/10.1017/jsl.2019.83}.

\bibitem{freund-rathjen_derivatives}
Anton Freund and Michael Rathjen, \emph{Derivatives of normal functions in
  reverse mathematics}, 2019, arXiv:1904.04630.

\bibitem{friedman-kruskal-notes}
Harvey Friedman, \emph{Notes on {K}ruskal's theorem}, 1984, unpublished
  handwritten notes, Ohio State University, 163 pages.

\bibitem{girard-pi2}
Jean-Yves Girard, \emph{${\Pi^1_2}$-logic, part 1: Dilators}, Annals of Pure
  and Applied Logic \textbf{21} (1981), 75--219.

\bibitem{girard-intro}
\bysame, \emph{Introduction to ${\Pi^1_2}$-logic}, Synthese \textbf{62} (1985),
  191--216.

\bibitem{girard87}
\bysame, \emph{Proof theory and logical complexity, volume 1}, Studies in Proof
  Theory, Bibliopolis, Napoli, 1987.

\bibitem{hasegawa97}
Ryu Hasegawa, \emph{An analysis of divisibility orderings and recursive path
  orderings}, Advances in Computing Science --- ASIAN'97 (R.K. Shyamasundar and
  K.~Ueda, eds.), Lecture Notes in Computer Science, vol. 1345, 1997.

\bibitem{higman52}
Graham Higman, \emph{Ordering by divisibility in abstract algebras},
  Proceedings of the London Mathematical Society \textbf{3} (1952), no.~2,
  326--336.

\bibitem{deJongh-Parikh}
Dick~{de} Jongh and Rohit Parikh, \emph{Well-partial orderings and
  hierarchies}, Indagationes Mathematicae \textbf{80} (1977), no.~3, 195--207.

\bibitem{kreuzer-yokoyama}
{Alexander P.} Kreuzer and Keita Yokoyama, \emph{On principles between
  {$\Sigma_1$}- and {$\Sigma_2$}-induction, and monotone enumerations}, Journal
  of Mathematical Logic \textbf{16} (2016), no.~1.

\bibitem{kriz-thomas}
Igor K{\v r}i{\v z} and Robin Thomas, \emph{On well-quasi-ordering finite
  structures with labels}, Graphs and Combinatorics \textbf{6} (1990), 41--49.

\bibitem{kruskal60}
Joseph Kruskal, \emph{Well-quasi-ordering, the tree theorem, and {V}azsonyi's
  conjecture}, Transactions of the American Mathematical Society \textbf{95}
  (1960), no.~2, 210--225.

\bibitem{marcone-bad-sequence}
Alberto Marcone, \emph{On the logical strength of {N}ash-{W}illiams' theorem on
  transfinite sequences}, Logic: From Foundations to Applications (Oxford)
  (W.~Hodges, M.~Hyland, C.Steinhorn, and J.Truss, eds.), Oxford University
  Press, 1996, pp.~327--351.

\bibitem{MRW-Veblen}
Jeroen~{van der} Meeren, Michael Rathjen, and Andreas Weiermann,
  \emph{Well-partial-orderings and the big {V}eblen number}, Archive for
  Mathematical Logic \textbf{54} (2015), no.~1-2, 193--230.

\bibitem{MRW-Bachmann}
\bysame, \emph{An order-theoretic characterization of the
  {H}oward-{B}achmann-hierarchy}, Archive for Mathematical Logic \textbf{56}
  (2017), no.~1-2, 79--118.

\bibitem{nash-williams63}
Crispin~{St.\ J.\ A.} Nash-Williams, \emph{On well-quasi-ordering finite
  trees}, Proceedings of the Cambridge Philosophical Society \textbf{59}
  (1963), 833--835.

\bibitem{pelupessy-weiermann}
Florian Pelupessy and Andreas Weiermann, \emph{On the lengths of bad sequences
  of monomial ideals over polynomial rings}, Fundamenta Mathematicae
  \textbf{216} (2012), no.~2, 101--108.

\bibitem{rathjen-weiermann-kruskal}
Michael Rathjen and Andreas Weiermann, \emph{Proof-theoretic investigations on
  {K}ruskal's theorem}, Annals of Pure and Applied Logic \textbf{60} (1993),
  49--88.

\bibitem{schmidt-habil}
Diana Schmidt, \emph{Well-partial orderings and their maximal order types},
  Habilitationsschrift, Universit{\"a}t Heidelberg, 1979.

\bibitem{simpson85}
{Stephen G.} Simpson, \emph{Nonprovability of certain combinatorial properties
  of finite trees}, Harvey Friedman's Research on the Foundations of
  Mathematics (L.~A. Harrington, M.~D. Morley, A.~S\v{c}\v{e}drov, and S.~G.
  Simpson, eds.), Studies in Logic and the Foundations of Mathematics, vol.
  117, North-Holland, 1985, pp.~87--117.

\bibitem{simpson-higman}
\bysame, \emph{Ordinal numbers and the {H}ilbert basis theorem}, Journal of
  Symbolic Logic \textbf{53} (1988), 961--974.

\bibitem{simpson09}
\bysame, \emph{Subsystems of second order arithmetic}, Perspectives in Logic,
  Cambridge University Press, 2009.

\bibitem{weiermann09}
Andreas Weiermann, \emph{A computation of the maximal order type of the term
  ordering on finite multisets}, Mathematical Theory and Computational
  Practice. CiE 2009 (Klaus Ambos-Spies, Benedikt L{\"o}we, and Wolfgang
  Merkle, eds.), Lecture Notes in Computer Science, vol. 5635, 2009.

\end{thebibliography}

\end{document}